\DeclareMathOperator{\Supp }{supp}
\DeclareMathOperator{\Id }{Id} 
\DeclareMathOperator{\D}{div}
\newtheorem{theorem}{Theorem}[section]
\newtheorem{lemma}[theorem]{Lemma}
\newtheorem{proposition}[theorem]{Proposition}
\newtheorem{example}[theorem]{Example}
\newtheorem{definition}[theorem]{Definition}
\newtheorem{corollary}[theorem]{Corollary}
\newtheorem{remark}[theorem]{Remark}
\newcommand\wh[1]{%
\savestack{\tmpbox}{\stretchto{%
  \scaleto{%
    \scalerel*[\widthof{\ensuremath{#1}}]{\kern-.6pt\bigwedge\kern-.6pt}%
    {\rule[-\textheight/2]{1ex}{\textheight}}
  }{\textheight}%
}{0.5ex}}%
\stackon[1pt]{#1}{\tmpbox}%
}
\def \TT  {\mathbb{T}} 
\def \RR {\mathbb{R}}  
\def \NN {\mathbb{N}}  
\def \ZZ {\mathbb{Z}}  
\def \ep {\varepsilon}
\def \F {{\bf \Phi}}
\def \k {\kappa}
\def \L {\Lambda}
\def \X {\xi}
\def \ep {\varepsilon}
\newcommand{\comment}[1]{}
\numberwithin{equation}{section}
\begin{document}

\title[anomalous work for the NSE]{Anomalous dissipation, anomalous work, and energy balance for the Navier-Stokes equations}

\author{Alexey Cheskidov}

\address{Department of Mathematics, Statistics and Computer Science,
    University of Illinois At Chicago, Chicago, Illinois 60607}

\email{acheskid@uic.edu}

\author{Xiaoyutao Luo}

\address{Department of Mathematics, Duke University, Durham, NC 27708}

\email{xiaoyutao.luo@duke.edu}

\subjclass[2010]{76D05, 35Q30}

\keywords{Navier-Stokes equations, anomalous dissipation, anomalous work, energy balance}
\date{\today}

\begin{abstract}
In this paper, we study the energy balance for a class of solutions of the Navier-Stokes equations with external forces in dimensions three and above. The solution and force are smooth on $(0,T)$ and the total dissipation and work of the force are both finite. We show that a possible failure of the energy balance stems from two effects. The first is the \emph{anomalous dissipation} of the solution, which has been studied in many contexts. The second is what we call the \emph{anomalous work} done by the force, a phenomenon that has not been analyzed before. There are numerous examples of solutions exhibiting \emph{anomalous work}, for which even a continuous energy profile does not rule out the anomalous dissipation, but only implies the balance of the strengths of these two effects, which we confirm in explicit constructions. More importantly, we show that there exist solutions exhibiting \emph{anomalous dissipation} with zero \emph{anomalous work}. Hence the violation of the energy balance results from the nonlinearity of the solution instead of artifacts of the force. Such examples exist in the class $u \in L_t^{3 } B^{\nicefrac{1}{3} -}_{3,\infty}$ and $f \in L_t^{2-} H^{-1}$, which implies the sharpness of many existing conditions on the energy balance. 

\end{abstract}

\maketitle

\section{Introduction}
The Navier-Stoke equations (NSE) for the incompressible viscous fluids are
\begin{equation}\label{eq:NSE}\tag{NSE}
\begin{cases}
\partial_t u -\nu\Delta u + \D(u \otimes u) + \nabla p =f &\\
\D u =0,
\end{cases}
\end{equation}
where $u(x,t)$ is the unknown $d$-dimensional velocity field, $p(x,t)$ is the scalar pressure, and $f(x,t)$ is the  external force. We consider the equations on the $d$-dimensional torus $\TT^d$ for $d\geq 3$ with normalized viscosity coefficient $\nu =1$, but all the results can be extended to $\RR^d$ as well.

In this paper, we study the validity of the energy equality of the forced NSE. The goal is twofold: on one hand, to identify possible causes for failure of energy balance; and on the other hand, to construct counterexamples showing the sharpness of positive results. For instance, we prove that the regularity of solutions possessing anomalous dissipation can be on the borderline of Onsager's critical spaces, where the energy equality holds.

\subsection{Background and previous works}
If a solution of the NSE is regular enough, then the change of the energy is equal to the work done by the force minus the total energy dissipation. This can be seen easily by multiplying the equation \eqref{eq:NSE} by $u$ and integrating in space-time
\begin{equation}\label{eq:energy_equality}
\frac{1}{2}\|u  (t_1)\|_2^2 -\frac{1}{2}\|u  (t_0)\|_2^2  + \int_{t_0}^{t_1} \|\nabla u  (t)\|_2^2 \,dt =\int_{t_0}^{t_1}   \langle u  , f  \rangle \  \, dt . 
\end{equation}

However, there is not enough regularity to justify this formal computation for weak solutions, and it is expected that some weak solutions may not obey the energy balance. It was in fact conjectured by Onsager \cite{MR0036116} that in $3$-dimensional inviscid flows, solutions with H\"older continuity $\alpha>\nicefrac{1}{3}$ conserve energy, and the conservation of energy may fail if $\alpha<\nicefrac{1}{3} $. Such a  failure of the energy conservation is often called anomalous dissipation as it is due to the lack of smoothness of the velocity rather than the viscous dissipation. Onsager's conjecture for the Euler equations has been the topic of recent research activities and is generally considered solved in both directions. See the works of Eyink \cite{MR1302409}, Constantin-E-Titi \cite{MR1298949}, Duchon-Robert \cite{MR1734632} and Constantin-Cheskidov-Friedlander-Shvydkoy \cite{MR2422377} for the positive direction and the works of De Lellis-Szekelydihi \cite{MR3090182,MR3254331}, Buckmaster-De Lellis-Isett-Szekelyhidi \cite{MR3374958} and Isett \cite{MR3866888,1706.01549} for the negative direction.

In the context of the NSE, the existence/nonexistence of anomalous dissipation is still an open question. Pioneering results of Leray state that for any finite energy initial data there exists at least one weak solution satisfying the energy inequality when there is no force, or more generally when the force $f\in L_t^2 H^{-1}_x$. Weak solutions obeying the energy inequality are called Leray-Hopf solutions. The question of whether such solutions satisfy the energy equality \eqref{eq:energy_equality} is open, and only conditional criteria are available so far in the unforced case. Notably, Lions \cite{MR0115018} proved that $u \in L^4_t L_x^4$ implies the energy balance in 3D,  which was extended to $u \in L^p_t L_x^q$ for $\frac{2}{p} + \frac{2}{q} =1$ in all dimensions by Shinbrot \cite{MR0435629}. These two classical results can be recovered by the estimates in \cite{MR2422377} and interpolations. Recent works of Leslie and Shvydkoy \cite{MR3759872,MR3842051} prove local-in-space results and energy balance for a certain Type-I blowup of strong solutions at the first blowup time. Shortly later, the two authors of current paper \cite{1802.05785} obtained weak-in-time improvement for Shinbrot's condition showing that $u \in L^{p,w}_t L^q_x$ for $\frac{2}{p} + \frac{2}{q} =1$ is enough for energy balance.

In contrast to the Euler equations, there are no known examples of anomalous dissipation for the unforced NSE. There are constructions of wild solutions though, with arbitrary smooth energy profile by Buckmaster and Vicol \cite{MR3898708}, wild solutions with some regularity in time by Buckmaster, Colombo and Vicol \cite{1809.00600} and by the two authors \cite{2009.06596}. However, the energy dissipation is infinite for all these wild solutions, i.e. they are not in the $L^2_t H^1_x$ class. Even though one can still identify forward or backward energy cascades in some of those solutions, the energy balance equation does not make sense outside of the $L^2_t H^1_x$ class.
For more results using the method of convex integration in fluid dynamics, see for example \cite{MR3479065,1710.11186,MR3951691,MR3884855,1812.11311,1812.08734,1907.10436} and references therein.

\subsection{The setup}
To find genuine examples of anomalous dissipation in the viscous setting, here we consider the simplest scenario: a possible violation of the energy balance at one point, while the solutions and forces under consideration are smooth on $(0,T)$~\footnote{There are technically two singular points $t\to 0^+$ and $t\to T^-$. For consideration of energy balance, we focus on $t\to T^-$.} (see Section \ref{sec:def_solutions} for precise definitions). We do not assume $f \in L^2_t H^{-1}$, but merely finite work done by the force. Such a relaxation is natural considering that a finite work done is the minimal assumption for which the energy balance equation makes sense, and in fact, the energy equality \eqref{eq:energy_equality} holds automatically on $(0,T)$ for this class of smooth solutions.

One of our goals is to determine whether the energy balance still holds in these one-point singularity scenarios. A heuristic in mind is that as the time approaches the possible singularity, a fixed amount of energy may move to the infinite wave-number creating a jump in the energy profile, cf. the energy jump formula \eqref{eq:energy_jump_formula}. If such a heuristic is viable, it could lead to a surprising result: a smooth solution on $(0,T)$ that does not obey the energy balance on $[0,T]$. Note that all previous nonconservative Euler solutions in \cite{MR3090182,MR3254331,MR3374958,MR3866888} have a different mechanism from our one-point model.

It turns out that besides the anomalous dissipation, there is another possible cause of the failure of the energy balance, which had not been studied in the past. The forces considered in this paper are smooth on $(0,T)$ and produce a finite energy input, but may not necessarily be in the class $f \in L^2_t H^{-1}$. This allows for a possibility of a new phenomenon, what we call the \textit{anomalous work}, which is elaborated below. 

Denote by $u_{\leq q}$ the Littewood-Paley projection onto the frequencies $\lesssim 2^q$. Then the energy balance for $u_{\leq q}$ reads
\begin{equation}\label{eq:intro_energy_qshell}
\frac{1}{2}\|u_{\leq q} (t)\|_2^2 -\frac{1}{2}\|u_{\leq q} (t_0)\|_2^2 + \int_{t_0}^{t} \|\nabla u_{\leq q} (\tau)\|_2^2 \,d \tau = -\int_{t_0}^{t} \langle ( u\cdot \nabla u )_{\leq q}, u_{\leq q} \rangle \, d\tau +\int_{t_0}^{t}  \langle u_{\leq  q} , f_{\leq q} \rangle    \, d\tau . 
\end{equation}
Since the solution has regularity $u \in L^\infty L^2 \cap L^2 H^1$, the terms on the left side of \eqref{eq:intro_energy_qshell} will converge to their natural limits as $q \to \infty$ and only the terms on the right might cause the failure of the energy balance.

On one hand, as in \cite{MR2422377}, one says the \textit{anomalous dissipation} occurs (over some time interval) if 
$$
\int \langle ( u\cdot \nabla u )_{\leq q}, u_{\leq q} \rangle \, d\tau \not \to 0 \quad \text{as $q\to \infty$}.
$$
This motivates the following quantitative definition of the anomalous dissipation $\overline{\Pi}>0$ considered in the literature
\begin{equation}\label{eq:intro_def_anomalous_dissipation}
\overline{\Pi}  = \limsup_{q\to \infty } \Big| \int_{T/2}^{T} \langle ( u\cdot \nabla u )_{\leq q}, u_{\leq q} \rangle \, dt \Big|  .
\end{equation}
On the other hand, since the solution $u\in L^2_t H^1$, if the force $f   \in L^2_t H^{-1}$, then by the duality between $L^2_t H^1$ and $ L^2_t H^{-1}$, the last term in \eqref{eq:intro_energy_qshell} converges to its natural limit, the total work done by the force. However, if $f   \not \in L^2_t H^{-1}$, then in general (on some time interval) it could happen that
\begin{equation}\label{eq:intro_anomalous_work}
\int  \langle u_{\leq  q} , f_{\leq q} \rangle    \, d\tau  \not \to \int  \langle u  , f  \rangle    \, d\tau \quad \text{as $q\to \infty$}
\end{equation}
and when \eqref{eq:intro_anomalous_work} occurs, we say the force has \textit{anomalous work}. 

Similar to the anomalous dissipation and motivated by \eqref{eq:intro_anomalous_work}, we can quantify the anomalous work $\overline{\F} $ of the force by
\begin{align}\label{eq:intro_def_anomalous_work}
\overline{\F}  & = \limsup_{q\to \infty } \Big| \int_{T/2}^{  T} \big( \langle  u_{\leq q} \cdot f_{\leq q} \rangle -\langle u\cdot f \rangle \big) \, dt \Big|.
\end{align}
This definition of anomalous work is consistent with \eqref{eq:intro_energy_qshell} and in line with the classical definition of anomalous dissipation \eqref{eq:intro_def_anomalous_dissipation}.

Essentially, by allowing ``rougher'' forces, high-high interactions of the solution and the force can result in nonzero anomalous work, which in turn can violate the energy balance without any energy cascade.
Such examples are abundant:  take a smooth stationary Euler flow and push its frequency to infinity by a suitable force as $t\to T^-$ will produce a finite dissipation example with a jump in the energy at $t=T $. However, such examples, considered in Section \ref{sec:trivial_positive}, are not very interesting and more or less trivial.

Our main goal is to contract examples with genuine energy cascade, where the anomalous dissipation is nonzero. First, in Section \ref{sec:zeroF_nonzeroPi}, we will focus on the class of solutions with no anomalous work, which we argue are the physical ones, and were all the classical conditions for the energy balance, such as $u \in  L^{p}_t L^{q}_x$ for some $\frac{2}{p} +\frac{2}{q} =1$, still hold. In this class, we will construct examples of solutions violating the energy balance, and show the sharpness of positive results. Second, in Section \ref{sec:continuousE}, we will construct an example where the energy equality holds, but both anomalous work and anomalous dissipation are nonzero, in effect canceling each other.

We first present the main results to classify possible scenarios where the anomalous dissipation or anomalous work persists.

\subsection{Main results}
Let us recall several key notations. Throughout the paper, we denote $Q_T =\TT^d \times (0,T)$ the space-time domain, and $  \mathcal{N}(Q_T) =\{(u,f)\}$ the class of smooth solutions with smooth force and finite energy input introduced in Definition \ref{def:smooth_solution}. Note that even though the solutions $(u,f)$ are smooth on $Q_T$, they must blow up near $t=0$ or $t=T$ to develop nontrivial anomalous dissipation or anomalous work. The reader should note that the notion of smoothness in this paper refers only to the interior of the time interval $(0,T)$.

Our positive results state as follows.
\begin{theorem}\label{thm:main_result_Pi_F_equal}
Suppose $(u,f) \in \mathcal{N}(Q_T)$, then the following two conditions are equivalent.
\begin{enumerate}
    \item The energy is continuous: $u \in C([0,T ];L^2)$.
    \item $(u,f)$ satisfies the energy equality \eqref{eq:energy_equality} on $[0,T]$.
\end{enumerate}

If one of the conditions holds, then the anomalous dissipation and the anomalous work are the same: $\overline{\Pi} = \overline{\F}$.
\end{theorem}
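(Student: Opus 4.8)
The plan is to route everything through the frequency-localized balance \eqref{eq:intro_energy_qshell}, which is valid on every $[t_0,t]\subset(0,T)$ by interior smoothness, and its formal $q=\infty$ limit \eqref{eq:energy_equality}. Write $E(t)=\tfrac12\|u(t)\|_2^2$ and $E_q(t)=\tfrac12\|u_{\le q}(t)\|_2^2$, and let $P_{\le q}$ denote the Littlewood--Paley projector, so $u_{\le q}=P_{\le q}u$. First I would record that, since $u\in L^2_tH^1$ and the total work is finite (both built into $\mathcal N(Q_T)$), the right-hand side of \eqref{eq:energy_equality} on $[t_0,t_1]$ converges as $t_0\to0^+$ and $t_1\to T^-$; hence $E(0^+)$ and $E(T^-)$ exist and $E$ extends continuously to $[0,T]$. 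Using weak $L^2$-continuity of $u$ on $[0,T]$ and strong continuity at $t=0$, both recorded in Section~\ref{sec:def_solutions}, the weak limit $u(T)\in L^2$ is defined, weak lower semicontinuity gives $\tfrac12\|u(T)\|_2^2\le E(T^-)$, and $\tfrac12\|u(0)\|_2^2=E(0^+)$.

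For the equivalence (1)$\Leftrightarrow$(2): letting $t_0\to0^+$, $t_1\to T^-$ in \eqref{eq:energy_equality} yields $E(T^-)-E(0^+)+\int_0^T\|\nabla u\|_2^2\,dt=\int_0^T\langle u,f\rangle\,dt$. Subtracting this from the asserted balance on $[0,T]$ (in which $u(0),u(T)$ are the above weak limits and $\tfrac12\|u(0)\|_2^2=E(0^+)$), everything cancels except $\tfrac12\|u(T)\|_2^2$ against $E(T^-)$: thus \eqref{eq:energy_equality} holds on $[0,T]$ iff $E(T^-)=\tfrac12\|u(T)\|_2^2$. Since $u(t)\rightharpoonup u(T)$ in $L^2$ as $t\to T^-$, this last identity says exactly $\|u(t)\|_2\to\|u(T)\|_2$, which upgrades weak to strong convergence in the Hilbert space $L^2$; combined with interior smoothness and continuity at $t=0$ this is equivalent to $u\in C([0,T];L^2)$, proving the equivalence.

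Assume now (1) (equivalently (2)) and turn to $\overline\Pi=\overline{\F}$. I would localize the interior balance to $[T/2,t]$ and pass $t\to T^-$: on the left, $E_q(t)\to E_q(T)$ because $u_{\le q}(t)=P_{\le q}u(t)\to P_{\le q}u(T)=:u_{\le q}(T)$ in $L^2$ (here (1) enters), and $\int_{T/2}^t\|\nabla u_{\le q}\|_2^2\,d\tau\to\int_{T/2}^T\|\nabla u_{\le q}\|_2^2\,d\tau<\infty$ since $\|\nabla u_{\le q}\|_2=\|P_{\le q}\nabla u\|_2\lesssim\|\nabla u\|_2\in L^2_t$; on the right, the flux integrand $\langle(u\cdot\nabla u)_{\le q},u_{\le q}\rangle$, rewritten with $\D u=0$ as $-\langle u\otimes u,\nabla P_{\le q}u_{\le q}\rangle$, is bounded on $[T/2,T)$ by a constant (depending on $q$) times $\|u\|_{L^\infty_tL^2}^3$ via Bernstein's inequality. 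Hence $\Pi_q:=\int_{T/2}^T\langle(u\cdot\nabla u)_{\le q},u_{\le q}\rangle\,dt$, and then $W_q:=\int_{T/2}^T\langle u_{\le q},f_{\le q}\rangle\,dt$ through the identity itself, are well-defined finite numbers, and in the limit
\[ E_q(T)-E_q(T/2)+\int_{T/2}^T\|\nabla u_{\le q}\|_2^2\,dt=-\Pi_q+W_q. \]
On the other hand, the interior balance on $[T/2,t]$ with $t\to T^-$, using $E(T^-)=\tfrac12\|u(T)\|_2^2$ from (1), gives $\tfrac12\|u(T)\|_2^2-E(T/2)+\int_{T/2}^T\|\nabla u\|_2^2\,dt=W$ with $W:=\int_{T/2}^T\langle u,f\rangle\,dt$ finite. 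Subtracting the two identities,
\[ \Pi_q-(W_q-W)=\big(E(T)-E_q(T)\big)-\big(E(T/2)-E_q(T/2)\big)+\int_{T/2}^T\big(\|\nabla u\|_2^2-\|\nabla u_{\le q}\|_2^2\big)\,dt. \]

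It remains to send $q\to\infty$ in this last identity. The first bracket tends to $0$ because $u_{\le q}(T)\to u(T)$ in $L^2$ (using $u(T)\in L^2$, again via (1)), the second because $u_{\le q}(T/2)\to u(T/2)$ in $L^2$ (with $u(T/2)\in L^2$ by interior smoothness), and the integral by dominated convergence, since $\|\nabla u_{\le q}(\tau)\|_2\to\|\nabla u(\tau)\|_2$ for a.e.\ $\tau$ while $\|\nabla u_{\le q}(\tau)\|_2^2\lesssim\|\nabla u(\tau)\|_2^2\in L^1([T/2,T])$. Hence $\Pi_q-(W_q-W)\to0$; since $|\Pi_q|$ and $|W_q-W|$ then differ by at most $|\Pi_q-(W_q-W)|\to0$, their limits superior coincide, i.e.\ $\overline\Pi=\overline{\F}$. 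The genuinely delicate step is precisely this double passage to the limits $t\to T^-$ and $q\to\infty$: since the force need not lie in $L^2_tH^{-1}$ it cannot be estimated directly, and the argument avoids doing so by using only finiteness of the scalar work $W$ and transporting convergence through the localized identity, in which $f$ enters solely via the already-controlled number $W_q$.
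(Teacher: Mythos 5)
Your argument for the main content of the theorem is correct and follows essentially the paper's own route: the equivalence is obtained by passing to the limit in the interior energy equality and comparing the limit of $\|u(t)\|_2^2$ with the norm of the weak limit $u(T)$, and the identity $\overline{\Pi}=\overline{\F}$ is exactly the energy jump formula \eqref{eq:energy_jump_formula} (which the paper derives in Section \ref{sec:def_Pi_F} by the same subtraction of the frequency-localized balance from the interior balance) combined with $\tfrac12\|u(T)\|_2^2=\lim_{t\to T^-}\tfrac12\|u(t)\|_2^2$. Your careful handling of the double limit $t\to T^-$, $q\to\infty$, using only finiteness of the scalar work rather than any bound on $f$, is sound and matches the intent of the paper's jump-formula argument.

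There is, however, one concrete slip at the $t=0$ endpoint. You assert that strong $L^2$-continuity at $t=0$ is ``recorded in Section \ref{sec:def_solutions}'' and hence that $\tfrac12\|u(0)\|_2^2=E(0^+)$. This is not in the paper and is in fact contradicted by it: Definition \ref{def:smooth_solution} gives only $u\in L^\infty_t L^2$ with weak $L^2$ limits at the endpoints (Lemma \ref{lemma:solutions_extended_0_T}), and a remark following that lemma explicitly warns that the energy of a general element of $\mathcal N(Q_T)$ may be discontinuous at $t=0$. With that premise removed, your single subtraction at the pair $(t_0,t_1)=(0,T)$ only yields that the two nonnegative defects $E(0^+)-\tfrac12\|u(0)\|_2^2$ and $E(T^-)-\tfrac12\|u(T)\|_2^2$ (nonnegative by weak lower semicontinuity) are equal, which does not by itself force either to vanish. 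The repair is immediate and uses no new idea: the energy equality on $[0,T]$ is required for all pairs $t_0,t_1\in[0,T]$, so testing with $t_0=0$ and $t_1\in(0,T)$ forces $E(0^+)=\tfrac12\|u(0)\|_2^2$, while testing with $t_1=T$ forces $E(T^-)=\tfrac12\|u(T)\|_2^2$; each norm identity upgrades weak to strong convergence exactly as in your $t=T$ argument, and conversely continuity at both endpoints lets you pass to the limit in the interior equality. With that adjustment the proof is complete.
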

Therefore, continuous energy only implies the same strength of anomalous dissipation and anomalous work, which is different than in the unforced case or when $f \in L^2_t H^{-1}$. In particular, Theorem \ref{thm:main_result_Pi_F_equal} recovers a special case of the following result.
\begin{theorem}\label{thm:main_result_Pi_F_0}
Suppose $(u,f) \in \mathcal{N}(Q_T)$ such that $f \in L^p(0,T ;H^{-2+\frac{2}{p}})$ for some $1\leq p \leq 2$. Then the anomalous work $\overline{\F}=0$. Moreover, if in addition $u \in C([0,T ];L^2)$, then $\overline{\Pi} = \overline{\F}=0$.
\end{theorem}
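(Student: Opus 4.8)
The plan is to establish $\overline{\F}=0$ by a direct duality-and-interpolation estimate, and then obtain $\overline{\Pi}=0$ as an immediate consequence of Theorem~\ref{thm:main_result_Pi_F_equal}.

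First I would reduce the quantity defining $\overline{\F}$ to a single high-frequency pairing. Since the Littlewood--Paley projection onto frequencies $\lesssim 2^q$ is self-adjoint on $L^2$, one can move both cutoffs onto $f$ and write $\langle u_{\le q},f_{\le q}\rangle-\langle u,f\rangle=-\langle w_q,f\rangle$, where $w_q$ is a frequency truncation of $u$ to $\{|\xi|\gtrsim 2^q\}$ (in the sharp-cutoff normalization $w_q=u_{>q}$; for a smooth, non-idempotent cutoff it is the piece of $u$ killed by the square of the cutoff, which is still supported in $\{|\xi|\gtrsim 2^q\}$), and in any case $\|w_q(t)\|_{H^{s}}\to 0$ as $q\to\infty$ for a.e.\ $t$ and every $s$. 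Hence it suffices to show that $\int_{T/2}^{T}\langle w_q,f\rangle\,dt\to 0$ as $q\to\infty$; here $\int_0^T\langle u,f\rangle\,dt$ is the finite energy input of Definition~\ref{def:smooth_solution}, which under the present hypothesis on $f$ is also an absolutely convergent duality pairing, so all the pairings above are meaningful.

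The crux is to put $u$ in the function space dual to the hypothesis on $f$. Since $(u,f)\in\mathcal{N}(Q_T)$ has finite energy and finite total dissipation, $u\in L^\infty(0,T;L^2)\cap L^2(0,T;H^1)$, so pointwise-in-time interpolation $\|u(t)\|_{H^{2-\frac{2}{p}}}\le\|u(t)\|_{L^2}^{1-\frac{2}{p'}}\|u(t)\|_{H^1}^{\frac{2}{p'}}$ (with $p'$ the conjugate exponent of $p$; note the arithmetic identity $2-\tfrac2p=\tfrac{2}{p'}\in[0,1]$) yields $u\in L^{p'}(0,T;H^{2-\frac{2}{p}})$ with $\|u\|_{L^{p'}_tH^{2-\frac{2}{p}}}\lesssim\|u\|_{L^\infty_tL^2}^{1-\frac{2}{p'}}\|u\|_{L^2_tH^1}^{\frac{2}{p'}}<\infty$. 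This is exactly the scale dual to $f\in L^p(0,T;H^{-2+\frac{2}{p}})$. Consequently, for a.e.\ $t\in(T/2,T)$ we have $u(t)\in H^{2-\frac{2}{p}}$ and $f(t)\in H^{-2+\frac{2}{p}}$, and therefore
$$
|\langle w_q(t),f(t)\rangle|\le\|w_q(t)\|_{H^{2-\frac{2}{p}}}\,\|f(t)\|_{H^{-2+\frac{2}{p}}}\longrightarrow 0\qquad(q\to\infty),
$$
while the left-hand side is bounded, uniformly in $q$, by $\|u(t)\|_{H^{2-\frac{2}{p}}}\,\|f(t)\|_{H^{-2+\frac{2}{p}}}$, which lies in $L^1(T/2,T)$ by H\"older's inequality in time with exponents $p'$ and $p$. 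Dominated convergence then gives $\int_{T/2}^{T}\langle w_q,f\rangle\,dt\to 0$, i.e.\ $\overline{\F}=0$. For the second assertion, if in addition $u\in C([0,T];L^2)$, then Theorem~\ref{thm:main_result_Pi_F_equal} gives $\overline{\Pi}=\overline{\F}$, whence $\overline{\Pi}=\overline{\F}=0$.

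I do not expect a serious obstacle: the argument is essentially a duality estimate at the right regularity. The only points requiring care are the bookkeeping of the Littlewood--Paley cutoffs in the reduction step (the harmless frequency overlap for smooth, non-idempotent cutoffs), and selecting a single $q$-independent $L^1_t$ majorant so that dominated convergence applies uniformly down to the endpoint $p=1$ — where $p'=\infty$ and a norm-convergence argument ``$w_q\to0$ in $L^{p'}_tH^{2-\frac{2}{p}}$'' would fail, whereas the pointwise-in-time argument above still works.
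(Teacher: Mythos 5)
Your proposal is correct and follows essentially the same route as the paper's proof (Lemma \ref{lemma:unique_Pi}): interpolate $u\in L^\infty_tL^2\cap L^2_tH^1$ to get $u\in L^{p'}_tH^{2-\frac{2}{p}}$, pair against $f\in L^p_tH^{-2+\frac{2}{p}}$ by duality, and let the high-frequency tail vanish as $q\to\infty$, with the second assertion following from Theorem \ref{thm:main_result_Pi_F_equal}. The only cosmetic difference is that you run dominated convergence pointwise in time with an explicit $L^1_t$ majorant, whereas the paper applies H\"older in time first and lets the tail norm $\|f_{\geq q}\|_{L^p_tH^{-2+2/p}}$ tend to zero; both handle the endpoint $p=1$ correctly.
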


It is also worth noting that Theorem \ref{thm:main_result_Pi_F_0} is sharp, as will show that there are counterexamples when forcing $f \in L^{2-\ep}_t H^{-1}$ for any $\ep>0$, see Theorem \ref{thm:main_result_continuous_E_with_Pi}. We also remark that $u\in  L^3_t B^{\frac{1}{3}   }_{3,\infty} $ implies zero anomalous dissipation $\overline{\Pi} =0$ as in the classical unforced settings, see Lemma \ref{thm:onsager}.

Our next result concerns the uniqueness problem for solution class $\mathcal{N}(Q_T)$. It is known that the classical Ladyzhenskaya-Prodi-Serrin uniqueness criterion also holds in the forced case for $f\in L^2_t  H^{-1}$. Note that there are many refinements over the classical uniqueness results, notably \cite{MR2237686} by Germain, \cite{MR2838337} by Chemin and \cite{MR3767658} by Barker. It is worth mentioning that the latter two results \cite{MR2838337,MR3767658}  do not apply to our setting since in our case the initial data $u_0 \in L^2$ and there is no uniform regularity assumption for $f $. It seems that the result of \cite{MR2237686} can be extended to our setting (see Remark 3.3, \cite{MR2237686}), however, we choose not to do so, avoiding technicalities in harmonic analysis. We show that assuming only finite energy input, at least the classical uniqueness results hold. 

\begin{theorem}\label{thm:main_result_LPS}
Let $(u ,f), (v,f)  \in \mathcal{N}(Q_T)$ with the same force $f$ and initial data $u (0) =v(0) =u_0$. If in addition $u  \in L^p_t L^q_x(Q_T)$ with $\frac{2}{p} + \frac{d}{q} = 1$ and $v$ is continuous in $L^2$ at $t=0$, then $u = v$.
\end{theorem}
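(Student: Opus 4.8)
The plan is to prove uniqueness by the classical weak-strong argument, being careful about the two features that distinguish our setting from the textbook one: the force $f$ is only assumed to have finite energy input (not $f \in L^2_t H^{-1}$), and the solution $v$ is only assumed continuous in $L^2$ at the single point $t=0$ rather than globally. First I would set $w = u - v$ and derive the equation for $w$, namely $\partial_t w - \Delta w + \D(u\otimes u) - \D(v\otimes v) + \nabla(p_u - p_v) = 0$, which is forceless. The key point is that because both pairs lie in $\mathcal{N}(Q_T)$, the solutions are smooth on the open interval $(0,T)$, so on any compact subinterval $[\sigma,\tau] \subset (0,T)$ all manipulations — multiplying by $w$, integrating by parts, using the LPS regularity of $u$ — are rigorously justified without any mollification. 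Thus the force never enters the difference estimate at all, sidestepping the fact that $f \notin L^2_t H^{-1}$.

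Next I would run the standard energy estimate on $[\sigma,\tau]$. Writing the nonlinear difference as $\D(u\otimes u) - \D(v\otimes v) = \D(w\otimes u) + \D(v \otimes w)$, testing against $w$ and using $\D v = 0$ kills the $\D(v\otimes w)\cdot w$ term, leaving the bilinear term $\int (w\cdot\nabla)u\cdot w\,dx$ (after an integration by parts), which is controlled by the Ladyzhenskaya–Prodi–Serrin exponent: $|\int (w\cdot\nabla u)\cdot w| \lesssim \|\nabla w\|_2^{1+d/q}\|w\|_2^{1-d/q}\|u\|_q$ via Gagliardo–Nirenberg, and then Young's inequality absorbs the $\|\nabla w\|_2^2$ into the dissipation, leaving $\tfrac{d}{dt}\|w(t)\|_2^2 \le C\|u(t)\|_q^p \|w(t)\|_2^2$ with $\|u\|_q^p \in L^1_t$ since $u \in L^p_t L^q_x$. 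Grönwall on $[\sigma,\tau]$ then gives $\|w(\tau)\|_2^2 \le \|w(\sigma)\|_2^2 \exp\big(C\int_\sigma^\tau \|u(t)\|_q^p\,dt\big)$ for all $0 < \sigma \le \tau < T$.

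To conclude I need $\|w(\sigma)\|_2 \to 0$ as $\sigma \to 0^+$. Here $u$ is continuous in $L^2$ on $[0,T)$ up to $t=0$ automatically (as part of $\mathcal{N}(Q_T)$, via the left-hand side of \eqref{eq:intro_energy_qshell} converging), with $u(0) = u_0$, and $v$ is assumed continuous in $L^2$ at $t=0$ with $v(0) = u_0$; hence $\|w(\sigma)\|_2 = \|u(\sigma) - v(\sigma)\|_2 \to \|u_0 - u_0\|_2 = 0$. Letting $\sigma \to 0^+$ in the Grönwall inequality — using that $\int_0^\tau \|u(t)\|_q^p\,dt < \infty$ — forces $\|w(\tau)\|_2 = 0$ for every $\tau \in (0,T)$, so $u = v$ on $Q_T$. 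The main obstacle, and the only place that requires genuine care, is justifying that the energy estimate on the open interval closes without boundary contributions at $\sigma$ and without the force: this rests on the smoothness of $(u,f),(v,f)$ on $(0,T)$ guaranteed by membership in $\mathcal{N}(Q_T)$, so that the only limiting step is sending the left endpoint to $0$, where the $L^2$-continuity hypotheses on $u$ and $v$ do exactly the required job. One should also check that $v$ being merely a member of $\mathcal{N}(Q_T)$ (hence smooth on $(0,T)$) means no extra regularity on $v$ is needed beyond $L^\infty_t L^2 \cap L^2_t H^1$, which is automatic; it is $u$ that must carry the subcritical LPS integrability.
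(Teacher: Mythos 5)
Your architecture is the same as the paper's (Proposition \ref{prop:L2_stability_w}): write the forceless equation for $w=u-v$, run the energy estimate on compact subintervals of $(0,T)$ where both solutions are smooth so that no mollification and no control on $f$ is needed, absorb the trilinear term $\int (w\cdot\nabla w)\cdot u\,dx$ via the Ladyzhenskaya--Prodi--Serrin exponent, apply Gr\"onwall, and send the left endpoint to $0$. All of that is fine. The genuine gap is in the final limiting step, which is also the crux of the theorem: you assert that $u$ is \emph{strongly} continuous in $L^2$ at $t=0$ ``automatically, as part of $\mathcal{N}(Q_T)$.'' This is false. Membership in $\mathcal{N}(Q_T)$ yields only weak $L^2$-continuity at the endpoints (Lemma \ref{lemma:solutions_extended_0_T}) together with the existence of $\lim_{t\to 0^+}\|u(t)\|_2^2$ (Lemma \ref{lemma:fancyLebesgue}); nothing forces that limit to equal $\|u_0\|_2^2$. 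Indeed, the whole point of the paper is that solutions in $\mathcal{N}(Q_T)$ can have an energy jump at an endpoint (Theorem \ref{thm:main_result_Pi_without_F}), and the remark before Section \ref{subsec:LPS_unique} notes explicitly that the energy may be discontinuous at $t=0$ --- which is precisely why the theorem must \emph{assume} $L^2$-continuity of $v$ there. With only $u(\sigma)\rightharpoonup u_0$ you cannot conclude $\|u(\sigma)-v(\sigma)\|_2\to 0$, and your Gr\"onwall inequality does not close.

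The missing ingredient is that the strong $L^2$-continuity of $u$ at $t=0$ must be extracted from the LPS hypothesis rather than from the definition of $\mathcal{N}(Q_T)$. This is how the paper closes the argument: the class $L^p_tL^q_x$ with $\frac{2}{p}+\frac{d}{q}=1$ is Onsager subcritical (Theorem \ref{thm:onsager}, after interpolating with $u\in L^\infty_tL^2\cap L^2_tH^1$), so $u$ satisfies the energy balance near $t=0$; hence $\|u(\sigma)\|_2\to\|u_0\|_2$, which combined with the weak convergence upgrades to strong convergence, and then $\|w(\sigma)\|_2\to 0$ as required. Once this step is supplied your proof goes through (modulo the usual caveat at the endpoint $p=\infty$, $q=d$, where your Young-inequality absorption degenerates --- an issue the classical argument shares).
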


The result of Theorem \ref{thm:main_result_LPS} is expected considering that under such assumptions it can be seen as a weak-strong uniqueness result for ``Leray-Hopf weak solutions with finite energy input'' in the spirit of Definition \ref{def:smooth_solution}.

The next two theorems are our most surprising results. We construct solutions whose anomalous work is zero while anomalous dissipation is not. In this case, the violation of energy balance stems from the solution itself rather than the force. More importantly, these solutions can be made arbitrarily close to the borderline spaces of energy balance, cf. \cite{MR0115018,MR0435629,MR2422377,1802.05785}. 

\begin{theorem}\label{thm:main_result_Pi_without_F}
For any $\ep>0$ and dimension $d\geq 3$, there exists a  solution $(u,f)\in \mathcal{N}(Q_T)$ such that $ (u,f)$ satisfies the energy equality on $[0,T)$, namely
\begin{equation}
\frac{1}{2}\|u  (t_1)\|_2^2 -\frac{1}{2}\|u  (t_0)\|_2^2  + \int_{t_0}^{t_1} \|\nabla u  (t)\|_2^2 \,dt =\int_{t_0}^{t_1}   \langle u  , f  \rangle \  \, dt \quad \text{for all $t_0,t_1 \in [0,T)$} ,
\end{equation}
the anomalous work vanishes
\begin{equation}
\lim_{q\to \infty} \int_{T/2}^{  T} \big( \langle  u_{\leq q} \cdot f_{\leq q} \rangle -\langle u\cdot f \rangle \big) \, dt =0,
\end{equation}
and the energy is jump discontinuous at $t=T$,
\begin{align}
\lim_{t\to T^-} \|   u(t) \|_2^2 > \|   u(T) \|_2^2.
\end{align}

Moreover, $u$ is almost Onsager critical: $u \in L_t^{3 } B^{\frac{1}{3} -\ep }_{3,\infty} \cap L^{p}_t L^{q}_x$ for any $\frac{2}{p} +\frac{2}{q} =1+\ep $ and $f$ is almost Leray-Hopf: $f \in L^{2-\ep}_t H^{-1 }$.
\end{theorem}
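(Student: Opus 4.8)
The construction I have in mind is explicit; no convex integration is needed, because the force is a free parameter: once a divergence-free $u$ is chosen, take $p$ to be the associated Navier--Stokes pressure and set $f:=\p_t u-\Delta u+\D(u\otimes u)+\nabla p$, so that $(u,f)$ solves \eqref{eq:NSE} classically on $(0,T)$ and the whole problem reduces to engineering $u$. The plan is to take
\[
u=\sum_{n\ge1}a_n(t)\,W_n,
\]
where $W_n$ is a mean-zero divergence-free intermittent Beltrami flow oscillating at frequency $\l_n$ and concentrated on a periodic array of tubes of volume fraction $\d_n\to0$, normalized so that $\|W_n\|_2=1$, and $a_n\in C^\infty$ is supported on a time interval $I_n$ of length $|I_n|$, the intervals accumulating at $T$ with only consecutive ones overlapping. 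Choosing $\{a_n^2\}$ to be a partition of unity on a left neighbourhood of $T$ makes $\|u(t)\|_2^2=\sum_n a_n^2(t)\equiv1$ there (cross terms vanish since the $\l_n$ are lacunary), and with $u\equiv0$ near $t=0$ one has $u\in C^\infty(\TT^d\times[0,T))$; setting $u(T):=0$ gives $u\in C_w([0,T];L^2)$, and, since $W_n\rightharpoonup0$ in $L^2$, the jump $\lim_{t\to T^-}\|u(t)\|_2^2=1>0=\|u(T)\|_2^2$.

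The crux is the choice of scales. With $\l_n$ geometric (ratio large enough that the Fourier supports of the $W_n$ are well separated), $|I_n|\sim\l_n^{-\g}$, $\d_n\sim\l_n^{-\rho}$, the requirements are, schematically: finite dissipation $\int_0^T\|\nabla u\|_2^2\,dt<\infty\iff\g>2$, which already forces $f\notin L^2_tH^{-1}$ since $\|\p_t u_n\|_{H^{-1}}\sim\l_n^{-1}|I_n|^{-1}=\l_n^{\g-1}$ and $\sum_n|I_n|\l_n^{2(\g-1)}=\sum_n\l_n^{\g-2}=\infty$; next, $f\in L^{2-\ep}_tH^{-1}$ (the term $\p_t u$ dominates, with $-\Delta u$ and $\mathbb P\,\D(u\otimes u)$ comparable or smaller) $\iff\g<\tfrac{2-\ep}{1-\ep}$; next, $u\in L^3_tB^{1/3-\ep}_{3,\infty}$ \emph{and} $u\notin L^3_tB^{1/3}_{3,\infty}$ -- the latter indispensable, since $u\in L^3_tB^{1/3}_{3,\infty}$ would force $\overline\F=\overline\Pi=0$ by Lemma \ref{thm:onsager} and thus preclude the jump -- which, using $\|W_n\|_3^3\sim\d_n^{-1/2}$, reads $1-3\ep+\tfrac\rho2<\g\le1+\tfrac\rho2$; finally $u\in L^p_tL^q_x$ for every $\tfrac2p+\tfrac2q=1+\ep$, which using $\|W_n\|_q\sim\d_n^{1/q-1/2}$ reduces to $\g>\tfrac\rho{1+\ep}$. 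For $\ep\in(0,1)$ the choice $\rho=2+\ep$, $\g=2+\tfrac\ep2$ satisfies all of these; the admissible window is nonempty precisely because $\ep>0$ and degenerates to $\g=\rho=2$ as $\ep\to0$, which is exactly why $f$ is pinned to $L^{2-\ep}_tH^{-1}\setminus L^2_tH^{-1}$ and $u$ to the edges of the Onsager and Lions--Shinbrot scales. Finally $\langle u,\D(u\otimes u)\rangle=0$ pointwise for smooth divergence-free $u$, so $\int_0^T\langle u,f\rangle\,dt=\tfrac12-\tfrac12\|u(0)\|_2^2+\int_0^T\|\nabla u\|_2^2\,dt<\infty$ and $(u,f)\in\mathcal N(Q_T)$.

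Three things then remain. First, the energy equality on $[0,T)$ is automatic: on every $[t_0,t_1]\Subset[0,T)$ the field $u$ is a classical solution, so \eqref{eq:energy_equality} holds there, and letting $t_1\to T^-$ (using $\|u(t)\|_2^2\to1$, $\nabla u\in L^2_{t,x}$, $\langle u,f\rangle\in L^1_t$) it holds on $[0,T)$. Second, $\overline\F=0$: on $I_n$ the force $f$ is supported in the frequency shell carrying $u_n$, so by lacunarity and the decay of Littlewood--Paley tails, $\int_{T/2}^T(\langle u_{\le q}\cdot f_{\le q}\rangle-\langle u\cdot f\rangle)\,dt$ equals $-\sum_{n:\l_n\gtrsim2^q}\int_{I_n}\langle u_n,f_n\rangle\,dt$ up to a vanishing error; but $\int_{I_n}\langle u_n,\p_t u_n\rangle\,dt=\tfrac12\int_{I_n}(a_n^2)'\,dt=0$ exactly, $\langle u_n,\nabla p_n\rangle=0$, the self-interaction $\langle u_n,\mathbb P\,\D(u_n\otimes u_n)\rangle$ vanishes pointwise (divergence-free) and the neighbour cross-interactions vanish by the Fourier-support geometry of Beltrami flows at well-separated frequencies, and the viscous term leaves $O(\l_n^2|I_n|)=O(\l_n^{2-\g})$, which is summable; hence $\sum_n|\int_{I_n}\langle u_n,f_n\rangle\,dt|<\infty$, the tail tends to $0$, and $\overline\F=0$. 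This is \emph{not} a consequence of Theorem \ref{thm:main_result_Pi_F_0}, since here $f\notin L^p_tH^{-2+2/p}$ for any $p\in[1,2]$: it is a true borderline case. Third, with $\overline\F=0$ in hand, letting $q\to\infty$ in \eqref{eq:intro_energy_qshell} on $[T/2,T]$ and subtracting the energy equality on $[T/2,T)$ gives
\[
\lim_{q\to\infty}\int_{T/2}^T\langle(u\cdot\nabla u)_{\le q},u_{\le q}\rangle\,dt=\tfrac12\Big(\lim_{t\to T^-}\|u(t)\|_2^2-\|u(T)\|_2^2\Big)=\tfrac12,
\]
so $\overline\Pi\ge\tfrac12>0$: genuine anomalous dissipation with no anomalous work.

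The main difficulty is that there is no iteration to absorb errors: $u$ is a single explicit object, so all of the estimates of the second step -- placing $u$ simultaneously at the Onsager and Shinbrot thresholds and $f$ at the Leray--Hopf threshold -- must be satisfied exactly, which forces a very rigid calibration of the three scales $\l_n$, $\d_n$, $|I_n|$, and all of the nonlinear cancellations underlying $\overline\F=0$ must survive the spatial cutoffs that produce the intermittent profiles. Constructing intermittent building blocks with precisely the claimed $L^q$, Besov and negative-Sobolev profiles while keeping $\sum_n|\int_{I_n}\langle u_n,f_n\rangle\,dt|$ finite is the technical heart of the argument.
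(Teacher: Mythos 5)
Your scaffolding matches the paper's: frequency-localized blocks glued in time with $\sum_n a_n^2\equiv 1$, the force defined a posteriori from $u$, the scaling windows for $\l_n$, $\d_n$, $|I_n|$ (your $\gamma=2+\ep/2$, $\rho=2+\ep$ is essentially the paper's $\tau_n\sim\lambda_n^{-1-\beta/2}$ with $\beta\in(2,2+\ep/4)$), and the use of the energy jump formula to pass between $\overline\Pi$ and $\overline\F$. But the core of the argument --- the proof that $\overline\F=0$ --- is broken, and the way it is broken destroys the theorem. You choose intermittent \emph{Beltrami} blocks and then argue that every nonlinear contribution to the high-frequency tail of the work vanishes: the self-interaction ``pointwise'' because $\langle u_n,\D(u_n\otimes u_n)\rangle=0$, and the neighbour interactions by Fourier-support separation. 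If that were literally true, then the same triadic bookkeeping shows $\Pi_q(T,h)\to 0$ (indeed, for Beltrami fields $\mathbb P\,\D(W\otimes W)=0$, so there is no mechanism transporting energy across any Littlewood--Paley cutoff), and the jump formula \eqref{eq:energy_jump_formula} then forces $\lim_q\F_q=-\tfrac12$, i.e.\ $\overline\F=\tfrac12\neq 0$. Your construction would land among the ``trivial'' examples of Section \ref{sec:trivial_positive}: a jump produced entirely by anomalous work. The internal contradiction is hidden in the step where you discard the self-interaction: what enters $\F_q$ at the boundary shell $\lambda_n\sim 2^q$ is not $\langle u_n,\D(u_n\otimes u_n)\rangle$ but its cutoff version $\int\D(u_n\otimes u_n)_{\leq q}\cdot(u_n)_{\leq q}$, which is exactly the energy flux and must be \emph{large}, not zero, for the theorem to hold.

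The missing ingredient is the paper's Lemma \ref{lemma:w_n_flux}: building blocks $w_n$ whose Fourier support is arranged in three integer blocks $A_n$, $B_n$, $C_n=A_n+B_n$ near $|k|\approx\lambda_n$, $\sqrt3\lambda_n$, $2\lambda_n$, so that the resonant triads $A_n+B_n+C_n^*\ni 0$ straddle the dyadic cutoff at $\lambda_{n+1}=2\lambda_n$; the usual antisymmetric cancellation in the flux then fails by design and $\int\D(w_n\otimes w_n)_{\leq n}\cdot(w_n)_{\leq n}\sim 2\lambda_n^{1+\beta/2}$, with zero flux through every other shell. The lifespans are then calibrated by $\Lambda_n\tau_n\to 1$ so that exactly one unit of energy is fluxed out of each shell during its lifetime; this gives $\Pi_q(T,h)\to-\tfrac12$ by direct computation (plus an error estimate for the overlap of consecutive cutoffs), and $\overline\F=0$ is \emph{deduced} from the jump formula rather than proved term by term. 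So the logical direction is the reverse of yours: the paper computes the flux first and gets the anomalous work for free; your term-by-term attack on the work tail cannot succeed because that tail necessarily contains the flux. To repair the proposal you must replace the Beltrami blocks by flux-carrying blocks of the above type and prove the quantitative flux lower bound and the calibration $\Lambda_n\tau_n\to1$; this is the technical heart of the paper and is absent from your argument.
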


Now we note that in the class of zero anomalous work solutions $\overline \F=0$, classical positive results still hold: if $u \in L_t^{3 } B^{\frac{1}{3}}_{3,\infty}$ or $u \in  L^{p}_t L^{q}_x$ for some $\frac{2}{p} +\frac{2}{q} =1$, then the energy equality is satisfied. Indeed, if $u$ is such a solution, then Theorem~\ref{thm:onsager} (cf. \cite{MR2422377}) implies no anomalous dissipation and the 
energy jump formula \ref{eq:energy_jump_formula} implies that $u$ is continuous in $L^2$ at $t=T$ and hence the energy equality thanks to Theorem~\ref{thm:main_result_Pi_F_equal}.
These positive results do not contain any conditions on the force, so the force by itself cannot create anomalous dissipation, which can only come from the energy cascade when $\overline \F=0$. Now thanks to Theorem~\ref{thm:main_result_Pi_without_F}, we know that these positive results are sharp. 

\begin{corollary}
Consider the class of solutions in $\mathcal{N}(Q_T)$ with zero anomalous work:
\[
\mathcal{S} = \{(u,f) \in \mathcal{N}(Q_T): \overline \F =0 \}.
\]
\begin{enumerate}
    \item If $(u,f) \in \mathcal{S}$ is such that  $u \in L_t^{3 } B^{\frac{1}{3}}_{3,\infty}$ or $u \in  L^{p}_t L^{q}_x$ for some $\frac{2}{p} +\frac{2}{q} =1$, then the energy equality is satisfied on $(0,T]$.
    \item For any $\ep>0$, there exists $(u,f) \in \mathcal{S}$ such that $u \in L_t^{3 } B^{\frac{1}{3} -\ep }_{3,\infty} \cap L^{p}_t L^{q}_x$ for any $\frac{2}{p} +\frac{2}{q} =1+\ep $ and the energy equality is not satisfied on $(0,T]$.
\end{enumerate}
\end{corollary}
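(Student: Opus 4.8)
The plan is to derive both parts of the corollary from results already in hand, so that essentially no new analysis is needed. Part~(1) will follow by combining the absence of anomalous dissipation (Lemma~\ref{thm:onsager}) with the hypothesis $\overline{\F}=0$ built into $\mathcal{S}$, feeding both into the energy jump formula \eqref{eq:energy_jump_formula} and then into the mechanism of Theorem~\ref{thm:main_result_Pi_F_equal}. Part~(2) will follow by simply unpacking the properties of the solution constructed in Theorem~\ref{thm:main_result_Pi_without_F}.

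For part~(1), take $(u,f)\in\mathcal{S}$ with $u\in L^3_t B^{1/3}_{3,\infty}$, or $u\in L^p_t L^q_x$ for some $\frac2p+\frac2q=1$. First I would invoke Lemma~\ref{thm:onsager} (resp.\ the Shinbrot-scale flux estimates of \cite{MR2422377}) to conclude that the energy flux $\int\langle(u\cdot\nabla u)_{\leq q},u_{\leq q}\rangle\,dt$ vanishes as $q\to\infty$, i.e.\ $\overline{\Pi}=0$; since $(u,f)\in\mathcal{S}$ gives $\overline{\F}=0$ by definition, the energy jump formula \eqref{eq:energy_jump_formula} then yields $\lim_{t\to T^-}\|u(t)\|_2^2=\|u(T)\|_2^2$. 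Combining this norm convergence with the weak $L^2$ continuity of $u$ up to $t=T$ upgrades it to strong convergence $u(t)\to u(T)$ in $L^2$, so that $u\in C((0,T];L^2)$. Finally, since the energy equality \eqref{eq:energy_equality} holds automatically on the open interval $(0,T)$, one passes to the limit $t_1\to T^-$ (using the finiteness of the total dissipation and of the work of the force) to extend it to all $t_0,t_1\in(0,T]$; this is precisely the implication $(1)\Rightarrow(2)$ of Theorem~\ref{thm:main_result_Pi_F_equal}, restricted away from $t=0$.

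For part~(2), I would just take the solution $(u,f)$ produced by Theorem~\ref{thm:main_result_Pi_without_F} for the given $\ep>0$: it satisfies $\overline{\F}=0$, hence lies in $\mathcal{S}$; it has the regularity $u\in L^3_t B^{1/3-\ep}_{3,\infty}\cap L^p_t L^q_x$ for every $\frac2p+\frac2q=1+\ep$ (and $f\in L^{2-\ep}_tH^{-1}$); and it obeys \eqref{eq:energy_equality} on $[0,T)$ while $\lim_{t\to T^-}\|u(t)\|_2^2>\|u(T)\|_2^2$. The strict jump at $t=T$ means the energy balance fails there, so it cannot hold on $(0,T]$. The only steps needing genuine care — which are handled by the cited statements rather than by the corollary itself — are the validity of the energy jump formula \eqref{eq:energy_jump_formula} for the whole class $\mathcal{N}(Q_T)$ and the fact that Lemma~\ref{thm:onsager} together with \cite{MR2422377} really covers both the Onsager endpoint $B^{1/3}_{3,\infty}$ and the full Shinbrot scale $\frac2p+\frac2q=1$; given these, the corollary is a matter of bookkeeping, and I anticipate no further obstacle.
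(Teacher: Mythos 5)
Your proposal is correct and follows essentially the same route as the paper: part (1) is obtained by combining Theorem \ref{thm:onsager} (and the flux estimates of \cite{MR2422377} for the Shinbrot scale) with $\overline{\F}=0$ in the energy jump formula \eqref{eq:energy_jump_formula} to get $L^2$-continuity at $t=T$ and then invoking Theorem \ref{thm:main_result_Pi_F_equal} away from $t=0$, while part (2) is read off directly from the solution of Theorem \ref{thm:main_result_Pi_without_F}. No gaps.
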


Our last result shows that Theorem \ref{thm:main_result_Pi_F_equal} and Theorem \ref{thm:main_result_Pi_F_0} are sharp in another way. In the class of $\mathcal{N}(Q_T)$, continuous energy does not rule out anomalous dissipation due to the presence of anomalous work. These solutions are also arbitrarily close to the borderline spaces of energy balance. 

\begin{theorem}\label{thm:main_result_continuous_E_with_Pi} 
For any $\ep>0$ and dimension $d\geq 3$, there exists $(u,f) \in \mathcal{N}(Q_T)$ such that  $ ( u,f)  $ satisfies energy equality on $[0,T]$
\begin{equation}
\frac{1}{2}\|u  (t_1)\|_2^2 -\frac{1}{2}\|u  (t_0)\|_2^2  + \int_{t_0}^{t_1} \|\nabla u  (t)\|_2^2 \,dt =\int_{t_0}^{t_1}   \langle u  , f  \rangle \  \, dt \quad \text{for all $t_0,t_1 \in [0,T]$},
\end{equation}
but the anomalous dissipation still occurs
\begin{equation} \label{intro-positiveflux}
\lim_{q\to \infty } \Big| \int_{T/2}^{T} \langle ( u\cdot \nabla u )_{\leq q}, u_{\leq q} \rangle \, dt \Big| > 0.
\end{equation}

In addition, $u$ is almost Onsager critical: $u \in L_t^{3 } B^{\frac{1}{3} -\ep }_{3,\infty} \cap  L^{p}_t L^{q}_x$ for any $\frac{2}{p} +\frac{2}{q} =1+\ep $, and $f$ is almost Leray-Hopf: $f \in L^{2-\ep}_t H^{-1 }$.
\end{theorem}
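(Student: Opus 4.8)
The plan is to prescribe $u$ directly, recover the force and pressure from \eqref{eq:NSE} by setting $f=\partial_t u-\Delta u+\D(u\otimes u)+\nabla p$ with $p$ the Leray pressure, and then verify the listed properties. The one soft input is Theorem~\ref{thm:main_result_Pi_F_equal}: once we know $(u,f)\in\mathcal N(Q_T)$ and $u\in C([0,T];L^2)$, the energy equality on $[0,T]$ and the identity $\overline\F=\overline\Pi$ come for free. So it suffices to build a divergence-free $u$, smooth on $Q_T$, lying in $L^\infty_tL^2\cap L^2_tH^1$, with $u(t)\to u(T)$ in $L^2$ as $t\to T^-$, with the nonlinear flux not vanishing, and with the stated space-time integrability of $u$ and $f$. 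The mechanism producing $\overline\Pi>0$ and $\overline\F>0$ together with continuous energy is: $u$ carries a genuine energy cascade to ever-higher frequencies as $t\to T^-$, but an accompanying force — necessarily acting at frequencies tending to infinity — absorbs the cascading energy before any of it escapes to infinite wavenumber, so the total $L^2$ norm stays continuous; the work of this absorbing force is exactly the anomalous work, and it balances the anomalous dissipation carried by the cascade. (In this sense it is the construction behind Theorem~\ref{thm:main_result_Pi_without_F} with the escape-to-infinity replaced by high-frequency extraction.)

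Concretely I would fix pairwise disjoint intervals $I_n\subset(T/2,T)$ with $|I_n|=\tau_n$ and $t\to T$ through $\bigcup_n I_n$, and an increasing sequence of dyadic shells $2^{m_n}$, $m_n\to\infty$, chosen (e.g.\ consecutively) so as to exhaust all large frequency levels exactly once. On $I_n$ the solution is a single building block $w_n$ supported in frequencies $\sim 2^{m_n}$ and $\sim 2^{m_n+1}$, assembled from a few divergence-free Fourier modes forming an active triad of $u\cdot\nabla u$, with time-dependent amplitudes (vanishing at the endpoints of $I_n$) so arranged that the nonlinearity moves exactly a fixed amount $c_0>0$ of energy from the $2^{m_n}$-shell to the $2^{m_n+1}$-shell over the time $\tau_n$; outside $\bigcup_n I_n$ one sets $u=0$. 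Since distinct blocks live on disjoint time-intervals, the truncated energy identity \eqref{eq:intro_energy_qshell} restricted to $I_n$ gives, for $q=m_n$,
\[
\int_{T/2}^{T}\big\langle (u\cdot\nabla u)_{\le q},u_{\le q}\big\rangle\,dt=\int_{I_n}\big\langle (u\cdot\nabla u)_{\le q},u_{\le q}\big\rangle\,dt=c_0+O\big(\text{dissipation of the low shell on }I_n\big),
\]
while for $q$ not of this form the integral is $o(1)$; as $m_n\to\infty$ this yields the existence of the limit in \eqref{intro-positiveflux} with value $c_0>0$, hence (using continuity of the energy, below, and Theorem~\ref{thm:main_result_Pi_F_equal}) also $\overline\F=\overline\Pi=c_0>0$.

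The quantitative core is the simultaneous choice of the shell $2^{m_n}$, the duration $\tau_n$, the amplitude of $w_n$, and the spatial structure of $w_n$, so that: (i) $\|w_n\|_{L^\infty_t L^2}\to 0$, whence $u\in C([0,T];L^2)$ (with $u(T)=0$), which by Theorem~\ref{thm:main_result_Pi_F_equal} gives the energy equality on $[0,T]$; (ii) $\sum_n\int_{I_n}\|\nabla w_n\|_2^2\,dt<\infty$, so $u\in L^2_tH^1$ and therefore $(u,f)\in\mathcal N(Q_T)$ (smoothness on $(0,T)$ and finiteness of the work being immediate once $u$ is continuous in $L^2$); (iii) $u\in L^3_tB^{1/3-\ep}_{3,\infty}\cap L^p_tL^q_x$ for every $\tfrac2p+\tfrac2q=1+\ep$; and (iv) $f\in L^{2-\ep}_tH^{-1}$. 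Fixing the flux $c_0$ across every shell while forcing the $L^2$-mass to zero already determines the amplitudes up to the choice of how concentrated in $x$ the block is; the estimates (ii)–(iv) then pin down the admissible window of concentrations and of the ratios $\tau_n$ versus $2^{-m_n}$, and a short computation shows this window degenerates as $\ep\downarrow0$ — this is exactly the statement that the construction cannot be pushed to the Onsager/Leray endpoints. One then assembles the pieces: $(u,f)\in\mathcal N(Q_T)$; $u\in C([0,T];L^2)$; the energy equality on $[0,T]$ and $\overline\F=\overline\Pi$ by Theorem~\ref{thm:main_result_Pi_F_equal}; and $\overline\Pi=c_0>0$ from the flux computation.

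The main obstacle is reconciling (ii)–(iv) with the fixed-flux and $L^2$-decay constraints. A naive self-similar (Kolmogorov-type) cascade fails: demanding order-one flux through arbitrarily high shells forces the top shell of $w_n$ to be so energetic that its dissipation is not summable over the necessarily very short intervals $I_n$, and makes the Reynolds stress $\D(w_n\otimes w_n)$ — a part of $f$ — too large in $H^{-1}$. Overcoming this requires a carefully designed, spatially concentrated (``intermittent'') building block whose cubic interaction is large compared both to its $L^2$-mass and to the forcing it generates, together with a delicate hierarchy relating $m_n$, $\tau_n$, the amplitude and the concentration parameter; arranging all of (i)–(iv) at once for any prescribed $\ep>0$, and only for $\ep>0$, is the technical heart of the theorem.
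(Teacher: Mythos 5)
Your reduction is sound: once $(u,f)\in\mathcal N(Q_T)$ and $u\in C([0,T];L^2)$ are established, Theorem \ref{thm:main_result_Pi_F_equal} gives the energy equality and $\overline\F=\overline\Pi$, so everything rides on producing a divergence-free $u$ with continuous energy, non-vanishing flux, and the stated integrability of $u$ and of $f=\partial_t u-\Delta u+\D(u\otimes u)$. Your gluing mechanism, however, is genuinely different from the paper's. You use time-disjoint intervals $I_n\uparrow T$, one intermittent block per interval active in a single pair of shells, with amplitudes vanishing at the endpoints of each $I_n$ and $\|w_n\|_2\to0$, so that $u(T)=0$ and the energy is continuous by decay. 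The paper instead builds a \emph{single static} field $w\in L^2$ all of whose Littlewood--Paley shells interact simultaneously with flux $\gtrsim\lambda_q^{\beta/2+1}$ (Lemma \ref{lemma:w_flux}, four families of concentrated Fourier cubes around fixed directions $\xi_1,\dots,\xi_4$, following Eyink), and sets $u=P_tw$ with a moving frequency cutoff $\Lambda(t)=(T-t)^{-(1-\ep)/2}$; continuity of the energy is then automatic because $u(t)\to w$ in $L^2$, and the time for which the triad at scale $\lambda_q$ is fully active is $\sim\lambda_q^{-2/(1-\ep)}\geq\lambda_q^{-\beta/2-1}$, which exactly compensates the size of the instantaneous flux. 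Your scheme, if completed, would have the mild advantage of a cleaner shell-by-shell bookkeeping (each $q$ receives its flux from exactly one interval) and of $u$ vanishing at $T$; the paper's buys a one-line proof of $L^2$-continuity and avoids designing the time profile of the amplitudes.

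The genuine gap is that the central object is never constructed: you assert the existence of a divergence-free block, concentrated in space, supported in two adjacent dyadic shells, whose flux through the intermediate wavenumber has a \emph{non-cancelling} main term of size $\sim\lambda^{1+\beta/2}\|w\|_2^3$, and you explicitly defer this as ``the technical heart.'' This is not a routine step. For a generic field built from ``a few divergence-free Fourier modes'' the trilinear sum $\sum_{k_1+k_2+k_3=0}(\widehat w(k_1)\cdot k_2)(\widehat w(k_2)\cdot\widehat w(k_3)\chi_{q+1}^2(k_3))$ suffers massive cancellation (by antisymmetry the paired triads cancel exactly unless the Littlewood--Paley symbol $\chi_{q+1}^2$ separates them), and extracting a signed main term requires choosing the directions and the cube geometry so that the Leray projections are almost orthogonal in all but one slot and so that $\chi_{q+1}^2(k_3)$ takes different values on the two halves of each antisymmetric pair; this is precisely the content of the paper's Lemmas \ref{lemma:w_n_flux} and \ref{lemma:w_flux} and occupies most of Sections \ref{sec:zeroF_nonzeroPi}--\ref{sec:continuousE}. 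Without that lemma your flux identity ``$=c_0+O(\cdot)$'' is an assumption, not a conclusion. The surrounding numerology you describe (flux $c_0$ per block, $\tau_n\sim\lambda_n^{-1-\beta/2}a_n^{-3}$, $2<\beta<2+\ep/4$, $a_n\to0$ slowly enough that the dissipation $\sum\lambda_n^{1-\beta/2}a_n^{-1}$ converges) does close, so the proposal is a correct skeleton; but as written it omits the one step that carries the mathematical content of the theorem.
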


\subsection{Strategy of the proof}
One of the main ingredients in the proof is a construction of a sequence of building blocks, vector fields with the optimal energy flux, which we then glue together in time. We use two different gluing mechanisms to achieve the positivity of the energy flux \eqref{intro-positiveflux}  for our solutions. In Theorem \ref{thm:main_result_Pi_without_F} all the energy escapes to the infinite wavenumber as time $t\to T^-$, resulting in a jump of the energy. This solution enjoys the anomalous dissipation with no anomalous work. On the other hand, in Theorem \ref{thm:main_result_continuous_E_with_Pi}, the energy does not completely transfer to the next shell as the force injects energy at each frequency producing the anomalous work (in addition to the regular work). This solution encounters both anomalous dissipation and anomalous work that balance each other resulting in the energy equality. The designed vector fields are intermittent of dimension close to $d-2$ based on the heuristics in \cite{1802.05785}, so that we also achieve finite dissipation. In Theorem \ref{thm:main_result_Pi_without_F} the gluing in time is more delicate and the time scales are carefully designed so that the force has no anomalous work. Our method of constructing such pathological solutions is very flexible and less restrictive than traditional methods since no uniform regularity of the force is assumed.

In view of the energy cascade in the construction of Theorem \ref{thm:main_result_Pi_without_F}, the force is designed so that it does not interfere with the energy cascade but only helps the solution to keep the desired structure. This can be seen as an implementation of a blow-up in Tao's averaged NSE \cite{MR3486169} to the actual NSE, but with a force. The delay mechanism is enforced via gluing building blocks with a positive flux. So at each time, the transfer of energy occurs only at one particular scale. The intermittency of building blocks combined with the delay mechanism results in the optimal energy cascade to high modes, which allows us to construct examples with finite dissipation and almost critical Onsager's regularity.

\subsection{Physical motivation} 
The effect of rough forces on the energy cascade in turbulent flows has been extensively studied experimentally as well as numerically \cite{vassilicos2001intermittency,PhysRevE.67.066306, mazzi_vassilicos_2004}. Experimental setups usually involve a stirrer with a self-similar structure or a grid of a fractal dimension above two. In fractal forced direct numerical simulations, rough forces are used to model fluid obstacles, such as grids. See for instance \cite{mazzi_vassilicos_2004}, where the roughness of the force well surpassed the Leray-Hopf $L^2_tH^{-1}_x$ class for obstacles of high fractal dimension. It has been shown that the energy spectrum of fractal forced turbulence deviates from Kolmogorov's law and the energy dissipation rate does not exhibit Kolmogorov scaling, but instead diverges at high Reynolds numbers, which was also confirmed theoretically with rigorous upper bounds in \cite{doi:10.1063/1.2425101}.

Even though there is evidence that fractal obstacles should be modeled by rough forces, one can argue that such forces should not exhibit any anomalous work. For instance, in the case of an elastic fractal grid we would expect the force to satisfy
\[
\lim_{q\to \infty} \int_{t_1}^{t_2} \langle  u_{\leq q} \cdot F^{\text{fractal}}_{\leq q} \rangle \, dt =0,
\]
on sufficiently large time intervals $[t_1,t_2]$, so that the total work vanishes in the strong physical sense.  This combined with a driving (low mode, or Leray-Hopf) force $F^{\text{LH}}$ in the experiment would result in $\overline\F =0$, were the anomalous work $\overline \F$ is as in \eqref{eq:intro_def_anomalous_work} computed for $f=F^{\text{fractal}}+F^{\text{LH}}$ over one interval since we only study the case of a single time singularity.

As we assume zero anomalous work, i.e., no nonphysical mechanism to produce some pathological work on small scales, we study whether solutions to the \eqref{eq:NSE} can exhibit anomalous dissipation. Physically this would correspond to a dissipation mechanism for high velocities on small scales where the equations are not valid anymore. Such small scales cant get engaged if the energy cascade is stronger than the dissipation.

\subsection{Concluding comments}
The zero anomalous work construction of Theorem \ref{thm:main_result_Pi_without_F} is \emph{not} specific to any particular Littlewood-Paley decomposition: the anomalous work of the force is zero for any dyadic Littlewood-Paley cutoff such that each projection $\Delta_q$ has frequencies in the range $[ 2^{q-1},   2^{q+1} ]$. In addition, the value of the anomalous dissipation  \eqref{eq:intro_def_anomalous_dissipation} is always equal to the energy jump at $t=T$, so it represents the energy balance defect, as in the unforced case.


It is also worth noting that if one can strengthen Theorem \ref{thm:main_result_Pi_without_F} to $f \in L^2_t H^{-1}$, then a suitable modification of the proof would imply the nonuniqueness of Leray-Hopf weak solutions. Indeed, standard methods of constructing Leray-Hopf weak solutions can be used to remove the jump, and thus create a new solution.

We finish our discussion by comparing our method of constructing solutions with convex integration that has been developed for fluid dynamics in recent years. It is possible that by using convex integration and allowing for forcing one can also construct solutions with zero anomalous work and nonzero anomalous dissipation. However, it seems that with current techniques this is impossible in 3D and only works in very high dimensions. There is at least one advantage of using convex integration: forcing given by convex integration solutions has small low frequencies. So the force can be made arbitrarily small in $L^\infty_t  W^{-1,1}$ or $L^\infty_t W^{-k,p}$ for sufficiently large $k$ depending on $p\in[1,\infty]$~\footnote{It seems also possible to reach $f \in L_t^2 H^{-1-\ep}$ for any $\ep>0$ by convex integration, which is different than the ones $f \in L_t^{2-\ep} H^{-1}$ obtained here.}. The examples given in this paper do not have such a property.

\subsection*{Organization of the paper}
The rest of the paper is divided into the following sections. 
\begin{itemize}
    \item In Section \ref{sec:def_solutions} we introduce the solution class $\mathcal{N}(Q_T)$ and discuss basic properties and its relationship with other notion of solution. In particular, Theorem \ref{thm:main_result_LPS} is proved in Section \ref{subsec:LPS_unique}.
    \item In Section \ref{sec:def_Pi_F} we formulate anomalous dissipation and anomalous work by a Littlewood--Paley decomposition.
    \item Section \ref{sec:trivial_positive} is devoted to both the positive results and the counterexamples. On one hand, we prove Theorem \ref{thm:main_result_Pi_F_equal} and Theorem \ref{thm:main_result_Pi_F_0}. On the other hand, we give simple examples of nonzero anomalous work.
    \item The last two sections are dedicated to constructing counterexamples. Using intermittent vector fields with optimal energy flux, we prove Theorem \ref{thm:main_result_Pi_without_F} in Section \ref{sec:zeroF_nonzeroPi} and Theorem \ref{thm:main_result_continuous_E_with_Pi} in Section \ref{sec:continuousE}.
\end{itemize}

\section{Smooth solutions with finite input}\label{sec:def_solutions}
\subsection{Functional spaces and notations}
Throughout the paper, we consider the \eqref{eq:NSE} on $\Omega=\TT^d$ for $d\geq 3$. The space of test functions is denoted by $ \mathcal{D}(\Omega)$ or simply $\mathcal{D} $. Respectively, the space of distributions is denoted by $\mathcal{D}'$. The Lebesgue norm is written as $\| \cdot \|_{L^p(\Omega)} = \| \cdot \|_{ p } $

For any $f\in \mathcal{D}'$, its Fourier transform is denoted as $\widehat{f} $ or $\mathcal{F}f$. For any distribution $u \in \mathcal{D} $, we will use the Littlewood-Paley decomposition
\begin{equation}
u = \sum_{q \geq -1}\Delta_q u
\end{equation}
where $\Delta_q  $ denotes the projection onto frequencies $\sim\lambda_q :=2^q$. Note that throughout the paper $\lambda_q =2^q$ is the dyadic number for any $q \in \ZZ$.

The notation $X \lesssim Y$ means $ X \leq C Y$ for some constant $C>0$. If the constant $C$ depends on certain parameters $a_0,a_1 \dots a_n$, then we write $X \lesssim_{a_0,a_0 \dots a_n}$. $X \gtrsim Y$ is defined similarly and $X\sim Y$ means $X \lesssim Y$ and $Y \lesssim X$ at the same time.

For any $s\in \RR$, the Sobolev spaces $H^{s}$ consists of all distributions satisfying
\begin{align}
\sum_{q \geq -1} \lambda_q^2 \|\Delta_q u\|_2^2 <\infty.
\end{align}

For any Banach space $X$, a function $f:[0,T]\to X$ is called weakly continuous into $X$ if $\langle f,g \rangle_{X} : [0,T] \to \RR$ is continuous for any $g\in X'$. The space $C_w (0,T;X)$ consists of all functions weakly continuous into $X$.  


The space $\mathcal{L}(0,T)$ consists of all $f \in L_{loc}^{1}$ such that the limit
$$
\lim_{a\to 0^+, b\to T^-}\int_{a}^{b} f \, dt\quad \text{exists. }
$$ 
In particular, the Lebesgue space $L^1$ is included, $L^{1}(0,T) \subset \mathcal{L}(0,T)$. The space $\mathcal{L}(0,T)$ will be used to characterize finite work done by the force.

In what follows, unless otherwise indicated, the notation $\langle \cdot, \cdot \rangle $ is reserved for the standard $L^2$ inner product.

\subsection{Smooth solutions with finite input}
We now begin to introduce our class of smooth solutions $\mathcal{N}(Q_T)$. Since we are interested in the energy balance law, the solution should be of finite energy and the input from the force should also be finite.  
\begin{definition}[Solutions class $\mathcal{N}(Q_T)$]\label{def:smooth_solution}
Let $T>0$ and the space-time domain $Q_T := \TT^d \times (0,T)  $. We say $(u,f)$ is a \textbf{smooth solution with finite energy input}, or $(u,f) \in \mathcal{N}(Q_T) $ for simplicity, if all of the followings are satisfied
\begin{itemize}
    \item $u \in C^\infty_{t,x}(Q_T) \cap L^\infty(0,T; L^2(\Omega))$, $f\in C^\infty(Q_T)$;
    \item $u$ and $f$ satisfies \eqref{eq:NSE} for any $(t,x) \in Q_T$;
    \item $ \langle u,f \rangle \in \mathcal{L}(0,T  )$;  
    \item The limits of $f(t)$ exist in $\mathcal{D}'$ as $t \to 0^+$ and $t\to  T^- $.
\end{itemize}

\end{definition}

\begin{remark}
Here are a few comments concerning the solution class $( u, f ) \in  \mathcal{N}(Q_T) $.

\begin{itemize}
    \item As discussed in the introduction, we only assume the smoothness of $(u,f)$ on $(0,T)$, not $[0,T]$. The only uniform regularity we have is $u \in L^\infty(0,T; L^2(\Omega))$.

    \item The assumption that $f $ can be extended to a continuous distribution on $[0,T]$ is mainly for the compatibility of a weak solution. Without such an assumption, $u$ can not be extended to a weak solution on $[0,T]$, see Lemma \ref{lemma:solutions_extended_0_T}.
    
    \item The finite energy input condition $ \langle u,f \rangle \in \mathcal{L}(0,T  )$ is the minimal assumption for which the energy inequality makes sense on $ ( 0,T]$. This includes the usual case $f \in L^2 H^{-1}$ of the Leray-Hopf solutions.
\end{itemize}

\end{remark}

\begin{remark}
There are two potential singular points for solutions in $\mathcal{N}(Q_T)$: $t\to 0^+$ and $t\to T^-$. The latter case $t\to T^-$ is where we study possible failure of energy balance while $t\to 0^+$ is where possible non-uniqueness may emerge. Note that failure of energy balance at $t\to 0^+$ is the worst-case scenario of nonuniqueness, whereas the possible non-unique solutions considered in \cite{MR3341963} satisfy the energy balance at $t\to 0^+$.
\end{remark}

The following lemma shows that possible discontinuities of energy profile of such solutions can only be jumps.
\begin{lemma}\label{lemma:fancyLebesgue}
Assume that $(u,f)$ is a smooth solution of \eqref{eq:NSE} on $Q_T$ and $u \in L^2(0,T;H^1)$. Then $(u,f) \in \mathcal{N}(Q_T) $ if and only if $\displaystyle \lim_{s\to t}\|u(s)\|_2^2$ exists for $t=0^+$ and $T^-$.
\end{lemma}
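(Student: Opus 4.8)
The plan is to reduce the lemma to the energy identity on compact subintervals of $(0,T)$, using the extra hypothesis $u\in L^2(0,T;H^1)$ to tame the dissipation term. Since $u$ and $f$ are smooth on $Q_T$ and $\D u=0$, multiplying \eqref{eq:NSE} by $u$ and integrating over $\TT^d$ (as in \eqref{eq:energy_equality}) kills the pressure and transport terms, giving $\langle u(t),f(t)\rangle=\tfrac12\tfrac{d}{dt}\|u(t)\|_2^2+\|\nabla u(t)\|_2^2$ for every $t\in(0,T)$; integrating over any $[a,b]\subset(0,T)$ yields
\begin{equation}\label{eq:EI_fancyLeb}
\int_a^b\langle u,f\rangle\,dt=\tfrac12\|u(b)\|_2^2-\tfrac12\|u(a)\|_2^2+\int_a^b\|\nabla u\|_2^2\,dt .
\end{equation}
All of this is done on intervals $[a,b]$ staying inside the open set $(0,T)$, so smoothness of $u,f$ on $Q_T$ suffices and the singular times are only approached in the limit. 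Write $g(t):=\|u(t)\|_2^2$, which is continuous (indeed smooth) on $(0,T)$ because $u\in C^\infty(Q_T)$ and $\TT^d$ is compact. The key observation is that $u\in L^2(0,T;H^1)$ makes $\int_a^b\|\nabla u\|_2^2\,dt$ converge to the finite number $\int_0^T\|\nabla u\|_2^2\,dt$ as $a\to0^+$ and $b\to T^-$, so \eqref{eq:EI_fancyLeb} transfers the behaviour of $\int_a^b\langle u,f\rangle\,dt$ near the endpoints entirely onto that of $g(b)-g(a)$.

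For the ``only if'' direction, suppose $(u,f)\in\mathcal N(Q_T)$, so $\langle u,f\rangle\in\mathcal L(0,T)$, i.e. $\int_a^b\langle u,f\rangle\,dt$ has a limit $L$ as $(a,b)\to(0^+,T^-)$. I would first record the elementary fact that such a joint limit forces the two one-sided improper integrals $\lim_{a\to0^+}\int_a^{T/2}\langle u,f\rangle\,dt$ and $\lim_{b\to T^-}\int_{T/2}^b\langle u,f\rangle\,dt$ to exist separately: freezing one variable inside a region where $|\int_a^b\langle u,f\rangle\,dt-L|<\varepsilon$ shows that $\int_a^{T/2}\langle u,f\rangle\,dt$ (resp. $\int_{T/2}^b$) oscillates by at most $2\varepsilon$ near the endpoint, and one concludes by the Cauchy criterion. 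Plugging this back into \eqref{eq:EI_fancyLeb} with $a=T/2$ (resp. $b=T/2$) fixed and using the convergence of $\int\|\nabla u\|_2^2$, we obtain that $\lim_{b\to T^-}g(b)$ and $\lim_{a\to0^+}g(a)$ both exist, which is the asserted condition.

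For the ``if'' direction, assume $\lim_{s\to0^+}g(s)$ and $\lim_{s\to T^-}g(s)$ exist. Extending $g$ continuously to $[0,T]$ by these values makes it a continuous function on a compact interval, hence bounded, so $u\in L^\infty(0,T;L^2)$. Next, \eqref{eq:EI_fancyLeb} together with the convergence of $\int_a^b\|\nabla u\|_2^2\,dt$ shows that $\int_{T/2}^b\langle u,f\rangle\,dt$ and $\int_a^{T/2}\langle u,f\rangle\,dt$ converge as $b\to T^-$ and $a\to0^+$; since $\langle u,f\rangle$ is continuous on $(0,T)$ it is in $L^1_{\mathrm{loc}}$, and combining the two one-sided limits gives $\langle u,f\rangle\in\mathcal L(0,T)$. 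The remaining items of Definition~\ref{def:smooth_solution} — smoothness of $(u,f)$ on $Q_T$, the equation on $Q_T$, and the existence of the $\mathcal D'$ limits of $f$ at $t=0^+,T^-$ — are part of the standing hypothesis that $(u,f)$ is a smooth solution of \eqref{eq:NSE} on $Q_T$ (the only nontrivial point being the $\mathcal D'$-extendability of $f$, which we take as given). Hence $(u,f)\in\mathcal N(Q_T)$.

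The one step demanding genuine care is the decoupling of the two singular times: the finite-input condition $\langle u,f\rangle\in\mathcal L(0,T)$ is phrased through a single joint limit as $a\to0^+$ and $b\to T^-$, and one must extract from it the two separate one-sided limits, which is where the Cauchy/oscillation argument enters — this is, I expect, the main (if mild) obstacle. Everything else is bookkeeping around \eqref{eq:EI_fancyLeb}, with the only subtlety, noted above, that the energy identity may only be invoked on intervals that remain inside $(0,T)$.
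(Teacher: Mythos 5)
Your proof is correct and follows essentially the same route as the paper: both rest on the interior energy identity \eqref{eq:EI_fancyLeb} together with the convergence of $\int_a^b\|\nabla u\|_2^2\,dt$ guaranteed by $u\in L^2(0,T;H^1)$, so that the endpoint behaviour of $\int_a^b\langle u,f\rangle\,dt$ is equivalent to that of $\|u(\cdot)\|_2^2$. The paper states this equivalence in one line; your extra care in decoupling the joint limit defining $\mathcal{L}(0,T)$ into two one-sided limits is a correct (and welcome) elaboration of the same argument.
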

\begin{proof}
Indeed, since $(u,f)$ is smooth on $(0,T)$, the energy equality holds:
\[
\frac{1}{2}\|u(t)\|_2^2 = \frac{1}{2}\|u(t_0)\|_2^2 -\int_{t_0}^t \|\nabla u\|_2^2 \, d\tau + \int_{t_0}^t \langle u, f \rangle \, d\tau, \qquad 0<t_0< t<T.
\]
Since $u \in L^2 H^1$, the limits of $\|u(t)\|_2^2 $ at $0^+$ and $T^-$ exist if and only if $\langle u, f \rangle  \in \mathcal{L}(0,T) $.
\end{proof}

The next theorem shows that all the terms in the energy balance equation are finite on the closed interval $[0,T]$.
\begin{theorem}\label{thm:finite_input_imply_finite_dissip}
If $(u,f) \in \mathcal{N}(Q_T)$, then the energy dissipation is finite: $u \in L^2(0,T;H^1)$.
\end{theorem}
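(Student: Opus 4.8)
The plan is to integrate the pointwise energy identity over interior subintervals, bound the total dissipation uniformly in the endpoints, and then send the endpoints to $0$ and $T$, using that $\|\nabla u(t)\|_2^2\ge 0$ so that a uniform bound automatically upgrades to integrability on all of $(0,T)$. First, since $(u,f)$ is smooth on $Q_T$ and solves \eqref{eq:NSE} there, pairing the momentum equation with $u$, integrating over $\TT^d$, and using $\D u=0$ to annihilate the convective and pressure terms gives $\tfrac12\tfrac{d}{dt}\|u(t)\|_2^2+\|\nabla u(t)\|_2^2=\langle u(t),f(t)\rangle$ for $t\in(0,T)$, with all terms continuous on $(0,T)$; this is exactly the identity already used in the proof of Lemma \ref{lemma:fancyLebesgue}. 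Integrating over any $[t_0,t_1]$ with $0<t_0<t_1<T$ and discarding the nonpositive term $-\tfrac12\|u(t_1)\|_2^2$ yields
\[
\int_{t_0}^{t_1}\|\nabla u(t)\|_2^2\,dt \;\le\; \tfrac12\|u(t_0)\|_2^2 + \int_{t_0}^{t_1}\langle u,f\rangle\,dt .
\]

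Next I would bound the right-hand side uniformly in $0<t_0<t_1<T$. For the first term, $u\in L^\infty(0,T;L^2)$ together with continuity of $t\mapsto\|u(t)\|_2$ on $(0,T)$ gives $\|u(t_0)\|_2^2\le\|u\|_{L^\infty(0,T;L^2)}^2$ for every $t_0\in(0,T)$. For the second term I unpack $\langle u,f\rangle\in\mathcal{L}(0,T)$: this readily implies that the improper integrals $\int_0^{T/2}\langle u,f\rangle\,dt$ and $\int_{T/2}^{T}\langle u,f\rangle\,dt$ both converge, so each of the partial integrals $t_0\mapsto\int_{t_0}^{T/2}\langle u,f\rangle\,dt$ and $t_1\mapsto\int_{T/2}^{t_1}\langle u,f\rangle\,dt$ — being continuous on its half-open interval and having a finite one-sided limit at the endpoint — is bounded. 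Hence $\sup_{0<t_0<t_1<T}\big|\int_{t_0}^{t_1}\langle u,f\rangle\,dt\big|<\infty$, and combining the two bounds, $\int_{t_0}^{t_1}\|\nabla u\|_2^2\,dt\le C$ with $C$ independent of $t_0,t_1$.

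Finally, since $\|\nabla u(\cdot)\|_2^2$ is nonnegative and continuous on $(0,T)$, monotone convergence gives $\int_0^{T}\|\nabla u(t)\|_2^2\,dt=\lim_{t_0\to0^+,\,t_1\to T^-}\int_{t_0}^{t_1}\|\nabla u\|_2^2\,dt\le C<\infty$, i.e.\ $u\in L^2(0,T;H^1)$. The argument uses nothing beyond the smooth energy identity and the definition of $\mathcal{N}(Q_T)$; the only place requiring a bit of care is extracting the uniform bound on the partial work integrals from the membership $\langle u,f\rangle\in\mathcal{L}(0,T)$, which is a soft measure-theoretic point disjoint from the PDE.
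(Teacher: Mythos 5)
Your proof is correct and follows essentially the same route as the paper: rearrange the interior energy equality to isolate the dissipation, bound the right-hand side using $u\in L^\infty(0,T;L^2)$ and $\langle u,f\rangle\in\mathcal{L}(0,T)$, and pass to the limit $t_0\to 0^+$, $t_1\to T^-$. The only difference is that you spell out the uniform bound on the partial work integrals and invoke monotone convergence, which the paper leaves implicit.
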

\begin{proof}
Again, this is a simple consequence of the energy equality for $u(t)$ on $(0,T)$, according to which the energy dissipation is given by 
\[
\int_{t_0}^t \|\nabla u\|_2^2 \, d\tau =   \frac{1}{2}\|u(t_0)\|_2^2 - \frac{1}{2}\|u(t)\|_2^2 + \int_{t_0}^t \langle u, f \rangle \, d\tau, \qquad 0<t_0< t<T.
\]
Using $u\in  L^\infty(0,T;L^2)$ and the definition of $\mathcal{N}(Q_T)$, we can take the limsup as $t_0  \to 0^-$ and $t \to T^+$ of both sides to obtain the desired result.
\end{proof}

\subsection{Relation to weak solutions}
The aim here is to compare our class of solutions $\mathcal{N}(Q_T)$ with weak and Leray-Hopf solutions. First, recall that a weak solution of the NSE on $[0,T]$ is a weakly continuous and weakly divergence-free vector field $u\in C_w([0,T];L^2)$ solving \eqref{eq:NSE} in the sense of distributions.

\begin{lemma}\label{lemma:solutions_extended_0_T}
If $(u,f) \in \mathcal{N}(Q_T)$, then weak limits of $u(t)$ in $L^2$ exist as $t\to 0^+$ and $t\to T^-$.
\end{lemma}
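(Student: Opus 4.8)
The plan is to establish the existence of weak limits of $u(t)$ in $L^2$ at the two endpoints by combining the uniform bound $u \in L^\infty(0,T;L^2)$ with the temporal regularity encoded in the equation. First I would fix a test function $\varphi \in \mathcal{D}(\Omega)$ which is divergence-free (the general case reduces to this after a Leray projection, or one absorbs the pressure via the divergence-free constraint), and study the scalar function $t \mapsto \langle u(t), \varphi \rangle$ on $(0,T)$. Since $u$ is smooth on $Q_T$, this function is differentiable there, and from \eqref{eq:NSE} we have
\[
\frac{d}{dt}\langle u(t),\varphi\rangle = \langle u(t), \Delta \varphi \rangle + \langle u(t)\otimes u(t), \nabla \varphi \rangle + \langle f(t), \varphi \rangle .
\]
I would then argue that the right-hand side is integrable on $(0,T)$: the first term is controlled by $\|u\|_{L^\infty L^2}\|\Delta\varphi\|_2$, hence bounded; the second is controlled by $\|u\|_{L^2 H^1}$ after an integration by parts moving one derivative (or directly by $\|u(t)\|_2 \|u(t)\|_{\text{something}}$ using $u\in L^2 H^1$ from Theorem \ref{thm:finite_input_imply_finite_dissip} together with interpolation and Sobolev embedding, so that $\int_0^T |\langle u\otimes u, \nabla\varphi\rangle|\,dt < \infty$); and the integrability of the forcing term near both endpoints is exactly what the last bullet of Definition \ref{def:smooth_solution} and the condition $\langle u,f\rangle\in\mathcal{L}(0,T)$ are designed to give — here one should use that $f$ extends continuously to $[0,T]$ as a distribution, so $\langle f(t),\varphi\rangle$ is a bounded (indeed continuous) function of $t$ on $[0,T]$ for each fixed smooth $\varphi$.

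Granting that $\frac{d}{dt}\langle u(t),\varphi\rangle \in L^1(0,T)$, the function $t\mapsto\langle u(t),\varphi\rangle$ is absolutely continuous on $(0,T)$ and its limits as $t\to 0^+$ and $t\to T^-$ exist (and are finite) for every divergence-free $\varphi\in\mathcal{D}$. Denote these limits by $\ell_0(\varphi)$ and $\ell_T(\varphi)$. Combined with the uniform bound $\sup_{t}\|u(t)\|_2 \le M < \infty$, a standard density argument promotes this to weak convergence: for arbitrary $g \in L^2$, approximate $g$ by divergence-free test functions $\varphi_n$ (note $u(t)$ is divergence-free so only the solenoidal part of $g$ matters), and use the uniform bound to pass to the limit, showing $\langle u(t),g\rangle$ converges as $t\to 0^+$ (resp. $t\to T^-$) and that the limiting functional is bounded on $L^2$, hence represented by some $u_0\in L^2$ (resp. $u_T\in L^2$) with $\|u_0\|_2, \|u_T\|_2 \le M$. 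This is precisely the assertion that weak limits of $u(t)$ in $L^2$ exist at the endpoints.

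The main obstacle I anticipate is the careful treatment of the nonlinear term and the forcing term near the endpoints, since neither $u$ nor $f$ is assumed to have any uniform regularity up to $t=0$ or $t=T$ beyond $u\in L^\infty L^2$ and the integrability conditions. For the nonlinear term one must be sure that $u\otimes u \in L^1_t L^1_x$ near the endpoints, which follows from $u \in L^\infty_t L^2_x \cap L^2_t H^1_x$ and an interpolation/Sobolev-embedding estimate (in dimension $d$, $u\in L^\infty L^2\cap L^2 H^1 \hookrightarrow L^2 L^{\frac{2d}{d-2}}$, and $\|u\otimes u\|_{L^1_t L^1_x}\lesssim \|u\|_{L^\infty L^2}\|u\|_{L^2 L^2}$ on the finite time interval — in fact $L^\infty L^2$ alone gives $u\otimes u\in L^\infty L^1$). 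For the force, the subtle point is that we only know $\langle u, f\rangle \in \mathcal{L}(0,T)$ and that $f$ has a distributional limit at the endpoints; one must not conflate integrability of $\langle f,\varphi\rangle$ against a fixed smooth $\varphi$ (which is fine) with integrability against $u$ (which is the hypothesis). Since the argument only ever pairs $f$ with fixed smooth test functions, the distributional-limit hypothesis on $f$ suffices and the $\mathcal{L}(0,T)$ condition on $\langle u,f\rangle$ is not even needed for this particular lemma — though it is of course needed elsewhere. Once these integrability points are nailed down, the rest is the routine density argument above.
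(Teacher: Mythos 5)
Your proposal is correct and follows essentially the same route as the paper: reduce to divergence-free test functions, use the (weak/integrated) form of the equation to express $\langle u(t),\varphi\rangle$ in terms of integrals of the linear, nonlinear, and forcing terms, control the first two via $u\in L^\infty_t L^2$ and the last via the assumed distributional limits of $f(t)$ at the endpoints, and then upgrade to weak $L^2$ limits using the uniform energy bound. The only (cosmetic) difference is that you work with the differential identity for $\frac{d}{dt}\langle u,\varphi\rangle$ and integrate, whereas the paper starts directly from the integral (weak) formulation; your observation that the $\mathcal{L}(0,T)$ hypothesis on $\langle u,f\rangle$ is not needed here also matches the paper's argument.
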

\begin{proof}
Thanks to the incompressibility, it suffices to consider divergence-free test functions. By the weak formulation of the NSE, for $0<t<t_1<T$ and divergence-free $\varphi \in \mathcal{D}$ we have
\begin{align}
\langle u(t ), \varphi \rangle= \langle u(t_1), \varphi \rangle -\int_{t}^{t_1} \langle u  , \Delta \varphi \rangle \, d\tau + \int_{t}^{t_1} \langle u\otimes u    :\nabla \varphi \rangle \, d\tau -\int_{t}^{t_1} \langle f  ,  \varphi \rangle \, d\tau.
\end{align}
Sending $t \to 0^+$ or $t \to T^-$, we need to show that the limits of all terms on the right-hand side exist. This easily follows from our finite energy assumption $L^\infty_t L^2$ of $u$ and the assumption that the limits of $f$ as a distribution exists (see Definition \ref{def:smooth_solution}). 
\end{proof}

Thanks to Lemma \ref{lemma:solutions_extended_0_T} we can extend $u(t)$ to $[0,T]$ by weak continuity in $L^2$. So $u(0)$ and $u(T)$ will always denote weak limits $u(t)$ as $t\to 0+$ and $t\to T-$ respectively. Since the extended $u(t)$ is weakly continuous on $[0,T]$, it is a weak solution of \eqref{eq:NSE} on $[0,T]$.

Weak solutions satisfying the energy inequality are called Leray-Hopf weak solutions. We can show that solutions in $ \mathcal{N}(Q_T)$ are Leray-Hopf solutions for positive times. 

\begin{lemma}
If $(u,f) \in \mathcal{N}(Q_T)$, then for any $\ep>0$, $(u,f)$ is a Leray-Hopf weak solution on $[\ep,T]$, namely the energy inequality
\begin{align}
\frac{1}{2}\|u(t) \|_2^2 +  \int_{t_0}^{t} \|\nabla u \|_2^2 \, d\tau \leq \frac{1}{2}\|u(t_0) \|_2^2 +  \int_{t_0}^{t}  \langle u,f \rangle  \, d\tau 
\end{align}
is satisfied for all $t_0, t\in[\ep,T]$, $t\geq t_0$.
\end{lemma}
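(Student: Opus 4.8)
The plan is to reduce everything to the energy \emph{equality} that already holds on the open interval and then treat the single endpoint $t=T$ by weak lower semicontinuity. Being a weak solution on $[\ep,T]$ is already contained in the discussion preceding the lemma: the weakly continuous extension provided by Lemma \ref{lemma:solutions_extended_0_T} solves \eqref{eq:NSE} on $[0,T]$. So only the energy inequality needs proof. Fix $0<\ep<T$. For $\ep\le t_0\le t<T$ both times lie in $(0,T)$, where $u$ and $f$ are smooth, so multiplying \eqref{eq:NSE} by $u$ and integrating in space-time gives
\[
\frac{1}{2}\|u(t)\|_2^2 + \int_{t_0}^{t}\|\nabla u\|_2^2\,d\tau = \frac{1}{2}\|u(t_0)\|_2^2 + \int_{t_0}^{t}\langle u,f\rangle\,d\tau ,
\]
which is the asserted inequality (with equality) whenever $t<T$. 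Hence the only case requiring work is $t=T$.

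Next I would fix $t_0\in[\ep,T)$ and let $t\to T^-$ in the displayed identity. By Theorem \ref{thm:finite_input_imply_finite_dissip} we have $u\in L^2(0,T;H^1)$, so monotone convergence gives $\int_{t_0}^{t}\|\nabla u\|_2^2\,d\tau\to\int_{t_0}^{T}\|\nabla u\|_2^2\,d\tau$; and since $(u,f)\in\mathcal{N}(Q_T)$ forces $\langle u,f\rangle\in\mathcal{L}(0,T)$, the limit $\int_{t_0}^{t}\langle u,f\rangle\,d\tau\to\int_{t_0}^{T}\langle u,f\rangle\,d\tau$ exists. Therefore
\[
\lim_{t\to T^-}\tfrac{1}{2}\|u(t)\|_2^2 = \tfrac{1}{2}\|u(t_0)\|_2^2 - \int_{t_0}^{T}\|\nabla u\|_2^2\,d\tau + \int_{t_0}^{T}\langle u,f\rangle\,d\tau
\]
exists and equals the right-hand side.

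Finally I would invoke Lemma \ref{lemma:solutions_extended_0_T}: the extension $u(T)$ is precisely the weak $L^2$-limit of $u(t)$ as $t\to T^-$, so weak lower semicontinuity of the norm yields $\|u(T)\|_2^2\le\liminf_{t\to T^-}\|u(t)\|_2^2=\lim_{t\to T^-}\|u(t)\|_2^2$. Combining with the previous display gives
\[
\tfrac{1}{2}\|u(T)\|_2^2 + \int_{t_0}^{T}\|\nabla u\|_2^2\,d\tau \le \tfrac{1}{2}\|u(t_0)\|_2^2 + \int_{t_0}^{T}\langle u,f\rangle\,d\tau
\]
for every $t_0\in[\ep,T)$. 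Together with the equality case $t<T$ above and the trivial case $t_0=t$, this establishes the energy inequality for all $t_0,t\in[\ep,T]$ with $t\ge t_0$.

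As for difficulty, there is no serious obstacle: the content lies entirely in (i) using $u\in L^2_tH^1_x$ and $\langle u,f\rangle\in\mathcal{L}(0,T)$ to guarantee that the kinetic energy has a limit as $t\to T^-$, and (ii) the standard weak lower semicontinuity inequality at $t=T$. The only point that needs a little care is checking that the weak limit used at $t=T$ coincides with the object we have been calling $u(T)$, which is exactly the statement of Lemma \ref{lemma:solutions_extended_0_T}.
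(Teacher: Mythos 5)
Your argument is correct and is essentially the paper's own proof, just written out in full: the paper likewise combines the energy equality on $(0,T)$ (valid by smoothness) with the existence of $\lim_{t\to T^-}\|u(t)\|_2^2$ and weak lower semicontinuity of the $L^2$ norm at $t=T$. No gaps; the only endpoint needing care is $t=T$, which you handle exactly as intended.
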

\begin{proof}
This immediately follows from the fact that the energy equality is satisfied on $(0,T)$, and the lower semi-continuity of $\|u\|_2$ at $t\to T^-$ which is a consequence of weak continuity of $u(t)$.
\end{proof}

\begin{remark}
Here, by Leray-Hopf weak solution we mean a weak solution $u \in C_w([0,T];L^2) \cap L^2_t H^1(Q_T) $ satisfying the energy inequality, without any assumption on the regularity of the force on $[0,T]$.
\end{remark}
\begin{remark}
Note that in general $(u,f) \in \mathcal{N}(Q_T)$ is not a Leray-Hopf weak solution on $[0,T]$. The energy may be discontinuous at $t=0$ which is forbidden if energy inequality is satisfied starting at $t_0=0$. This is the reason that in Theorem \ref{thm:main_result_LPS} we need to assume the $L^2$-continuity at $t=0$.
\end{remark}

\subsection{Uniqueness results for $\mathcal{N}(Q_T)$}\label{subsec:LPS_unique}
In the last part of this section, we briefly touch on the uniqueness result in the class $\mathcal{N}(Q_T)$. We follow classical strategies of proving uniqueness. Suppose we have two smooth solutions $(u,f), (v,f) \in \mathcal{N}(Q_T)$ for the same force $f$, and $v$ is a ``strong solution''. Let $w:= u-v$ be the difference, the energy space $\mathcal{E}_T$ be
$$
\mathcal{E}_T:= L^\infty_t L^2 \cap L^2_t H^1(Q_T)
$$ 
and the trilinear operator $\mathcal{T }$ be
\begin{align}
\mathcal{T }:(a,b,c) \to \int_{0}^{T}\langle a\cdot \nabla b, c \rangle\, dt.
\end{align}

There are mainly two key points in the classical argument. First, we need the energy inequality on $[0,T]$ for both solutions. Second, we need one strong solution in some path space $\mathcal{P}_T \subset L^1_{loc}(Q_T)$ such that the trilinear operator $\mathcal{T }$ is continuous on $\mathcal{E}_T \times \mathcal{E}_T \times \mathcal{P}_T$, namely
\begin{align}\label{eq:boundedness_T}
\big| \mathcal{T }(a,b,c) \big| \lesssim \|a \|_{\mathcal{E}_T } \|b \|_{\mathcal{E}_T } \| c\|_{\mathcal{P}_T}.
\end{align}

With these in hand, one can then use a continuity argument and Proposition \ref{prop:L2_stability_w} to show that $\| w\|_{\mathcal{E}_T} \leq 0$, hence uniqueness.

For $\mathcal{P}_T= L^p_t L^q(Q_T) $, $\frac{2}{p}+ \frac{d}{q}=1$, the estimate \eqref{eq:boundedness_T} is classical for the 3D NSE (see for example \cite{Prodi1959}), it is also standard to extend it to other dimensions. To prove Theorem \ref{thm:main_result_LPS} we only need to justify the following $L^2$ stability estimate of the difference $w$.
\begin{proposition}\label{prop:L2_stability_w}
Under the assumptions of Theorem \ref{thm:main_result_LPS}, the following equality holds
\begin{equation*}
\|w (t)\|_2^2+ 2\int_{0}^{t} \| \nabla w(\tau) \|_2^2\, d\tau = 2 \int_0^{t} \langle w\cdot \nabla w, v\rangle \, d\tau \quad \text{for all $t\in[0,T]$}.
\end{equation*}
\end{proposition}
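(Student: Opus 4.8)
The plan is to derive the stated energy identity for $w = u - v$ by subtracting the (generalized) energy equalities for $u$ and $v$ on $[0,T]$, and then reorganizing the nonlinear terms. The starting point is that both $u$ and $v$ lie in $\mathcal{N}(Q_T)$, so each is smooth on $(0,T)$, lies in $\mathcal{E}_T = L^\infty_t L^2 \cap L^2_t H^1$, and satisfies the pointwise energy equality on every compact subinterval of $(0,T)$. Since $u \in L^p_t L^q_x$ with $\tfrac{2}{p} + \tfrac{d}{q} = 1$, it is a strong solution in the path space $\mathcal{P}_T$, so by the classical bound \eqref{eq:boundedness_T} the trilinear term $\mathcal{T}(a,b,u)$ (and likewise with $u$ replaced by $v$, since $v \in \mathcal{E}_T$ — though the cleanest route keeps the strong solution in the third slot) is well defined and continuous on $\mathcal{E}_T \times \mathcal{E}_T \times \mathcal{P}_T$. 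Equivalently, $\int_0^T \langle w \cdot \nabla w, v \rangle \, dt$ makes sense.

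First I would establish the identity on an interior interval $[\delta, t]$ with $0 < \delta < t < T$, where everything is smooth. Writing the NSE for $u$ and for $v$, taking the difference, multiplying by $w$ and integrating over $\TT^d \times [\delta, t]$, the pressure terms vanish by incompressibility and the linear terms are handled by integration by parts, giving
\begin{equation*}
\tfrac12 \|w(t)\|_2^2 - \tfrac12\|w(\delta)\|_2^2 + \int_\delta^t \|\nabla w\|_2^2 \, d\tau = -\int_\delta^t \langle \mathrm{div}(u\otimes u - v\otimes v), w \rangle \, d\tau.
\end{equation*}
The key algebraic step is the standard rewriting $u\otimes u - v\otimes v = w \otimes u + v \otimes w$, which after integrating by parts and using $\mathrm{div}\, w = \mathrm{div}\, v = 0$ together with the cancellation $\int \langle w\cdot\nabla w, w\rangle = 0$, collapses the right-hand side to $\int_\delta^t \langle w \cdot \nabla w, v \rangle \, d\tau$ (up to the factor of $2$ after clearing denominators). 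This gives the claimed identity on $[\delta, t]$.

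Next I would pass to the limit $\delta \to 0^+$ and $t \to T^-$. For the dissipation term $\int_\delta^t \|\nabla w\|_2^2$, monotone convergence applies since $w \in L^2_t H^1$ (Theorem \ref{thm:finite_input_imply_finite_dissip}). For the trilinear term, continuity of $\mathcal{T}$ on $\mathcal{E}_T \times \mathcal{E}_T \times \mathcal{P}_T$ plus dominated convergence (the integrand is $L^1$ in time by \eqref{eq:boundedness_T}) gives convergence of $\int_\delta^t \langle w\cdot\nabla w, v\rangle$. The delicate point is the boundary energy terms $\|w(\delta)\|_2^2$ and $\|w(t)\|_2^2$: by Lemma \ref{lemma:solutions_extended_0_T} both $u(\tau)$ and $v(\tau)$ have weak $L^2$ limits as $\tau \to 0^+$ and $\tau \to T^-$, so $w(\tau) \rightharpoonup w(0), w(T)$ weakly; but weak convergence only gives $\|w(t)\|_2^2 \le \liminf$, not equality, and similarly the sign of the error at $\delta$ goes the wrong way for a naive bound. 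This is where the hypotheses are used crucially. At $t = 0$ we are given $u(0) = v(0) = u_0$ and that $v$ is $L^2$-continuous at $0$; since $u$ is a Leray-Hopf solution on $[\varepsilon, T]$ for every $\varepsilon > 0$ and is strong (in $\mathcal{P}_T$), a short argument shows $u$ is in fact strongly continuous in $L^2$ down to $t=0$ as well (the strong-solution class controls the energy profile near the initial time), so $\|w(\delta)\|_2^2 \to 0$. At $t = T$ one uses that for a strong solution the energy equality holds up to $T$, hence $\|u(t)\|_2 \to \|u(T)\|_2$ strongly, which combined with weak convergence of $u(t)$ upgrades to strong convergence $u(t) \to u(T)$ in $L^2$; together with lower semicontinuity for $v$ this yields $\|w(t)\|_2^2 \to \|w(T)\|_2^2$ — or, if one only wants the identity on $[0,T]$ with $\|w(T)\|_2^2$ interpreted via the weak limit, one argues the identity first on $[\delta, t]$ for all $t < T$ and then passes $t \to T^-$ using that every term except $\|w(t)\|_2^2$ converges, forcing $\|w(t)\|_2^2$ to converge as well.

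I expect the main obstacle to be exactly this boundary analysis at the two endpoints: reconciling the mere weak $L^2$-continuity that comes for free from Lemma \ref{lemma:solutions_extended_0_T} with the strong convergence needed to get an \emph{equality} (rather than an inequality) in Proposition \ref{prop:L2_stability_w}. The resolution leans on the fact that $u$ belongs to the subcritical path space $\mathcal{P}_T = L^p_t L^q_x$, which — via the standard regularity theory for the Ladyzhenskaya–Prodi–Serrin class — forces $u$ to have a continuous (indeed smooth up to the endpoints, away from any genuine singularity) energy profile, removing the energy defect that a general element of $\mathcal{N}(Q_T)$ could carry. Everything else — the cancellation structure of the nonlinearity, the integration by parts, the limit passages in the interior — is routine.
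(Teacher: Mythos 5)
Your proposal is correct and follows essentially the same route as the paper: derive the energy identity for $w=u-v$ on interior intervals where both solutions are smooth, then pass to the endpoint $t_0\to 0^+$ using convergence of the dissipation and trilinear terms together with the strong $L^2$-continuity of $u$ at $t=0$ (which the paper obtains from the Onsager-subcriticality of the Ladyzhenskaya--Prodi--Serrin class, Theorem~\ref{thm:onsager}) and the assumed $L^2$-continuity of $v$ with $u(0)=v(0)$. Your additional discussion of the endpoint $t=T$ goes beyond what the paper writes (its proof stops at the $t_0\to 0^+$ limit), and your safer fallback there --- passing $t\to T^-$ and letting the convergence of all other terms force convergence of $\|w(t)\|_2^2$ --- is the right way to phrase it, since strong convergence of $u(t)$ combined with only weak convergence of $v(t)$ would not by itself identify the limit with $\|w(T)\|_2^2$.
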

\begin{proof}
Since $u$ and $v$ are smooth on $Q_T$, the difference $w$ satisfies
\begin{equation}\label{eq:w_equation}
\partial_t w -\Delta w +  w \cdot \nabla w + v\cdot \nabla w+ w\cdot\nabla v +\nabla \pi =0 \quad \text{for all $0<t<T$} .
\end{equation}
Multiplying \eqref{eq:w_equation} by $w$ and integrating, we get
\begin{align*}
\|w (t)\|_2^2-\|w (t_0)\|_2^2+ 2\int_{t_0}^{t} \| \nabla w(\tau) \|^2_2  \, d\tau = 2 \int_{t_0}^{t} \langle w\cdot \nabla w, v \rangle \, d\tau \quad \text{for all $t_0,t \in(0,T)$}.
\end{align*}
Since $u  \in L^p_t L^q_x(Q_T)$ with $\frac{2}{p} + \frac{d}{q} = 1$ and $v$ is bounded in $L^2$ at $t=0$, the trilinear term make sense when $ t_0 \to 0^+$, namely
$$
\int_{ 0}^{t} \langle w\cdot \nabla w, v \rangle \, d\tau <\infty .
$$
Moreover, the space $L^p_t L^q_x(Q_T)$ is Onsager subcritical (see Theorem~\ref{thm:onsager}), so $u$ satisfies the energy balance and is continuous in $L^2$ at $t=0$. Since $v$ is also continuous in $L^2$ at $t=0$ by the standing assumption, we have $ \| w(t_0)\|_2^2 \to 0$ as $t_0 \to 0^+$, which concludes the proof.

\end{proof} 
\section{Anomalous dissipation and anomalous work}\label{sec:def_Pi_F}
In this section, we formulate the concept of anomalous dissipation and anomalous work through the Littlewood--Paley decomposition. As briefly discussed in the introduction, these two quantities arise naturally when studying the energy balance of the NSE. Note that for our solution class $\mathcal{N}(Q_T)$, these two quantities do not depend on the particular decomposition used here, which is the same as in the unforced case or $ f\in L^2_t H^{-1}$, see the energy jump formula \eqref{eq:energy_jump_formula} below. 

\subsection{Littlewood--Paley decomposition}\label{subsec:LP_def}
We briefly introduce a Littlewood--Paley decomposition on $\TT^d$. Throughout the paper we use the notation $\lambda_q = 2^q$ for all $q\in \ZZ$. Let radially non-increasing $\chi(\xi) \in C^\infty_c(\RR^d)$ be such that $\chi(\xi)=1$ for $|\xi| \leq 1/2$ and $\chi=0$ for $|\xi| \geq 1$. Let $\varphi(\xi) = \chi(\lambda_1^{-1} \xi) - \chi( \xi) $ and define for any $q\in \NN$ $\chi_q(\xi)=\chi (\lambda_q^{-1} \xi)$  and $\varphi_q(\xi)=\varphi (\lambda_q^{-1} \xi)$. Then we have the partition
\begin{equation}
\chi(\xi) + \sum_{q\geq 0} \varphi_q(\xi) = 1.
\end{equation} 

We then let $\Delta_q$ be the Littlewood--Paley projection with symbol $ \varphi_q(\xi)$ for $q\geq 0$ or $\chi(\xi) $ if $q= -1$. For any  distribution $u \in \mathcal{D}(\TT^d)$ one has
\begin{equation*}
u = \sum_{q \geq -1}\Delta_q u
\end{equation*}
in the sense of distribution. We also use notations $\Delta_{\leq q} := \sum_{r \leq q } \Delta_r$, $u_q: = \Delta_q u$ and $ u_{\leq q} := \Delta_{\leq q}  u$. From the telescoping identity
\begin{equation}\label{eq:LP_chi_q}
\chi(\xi) + \sum_{ 0 \leq r\leq q} \varphi_r(\xi) = \chi_{q+1}(\xi),
\end{equation} 
it follows that $\Delta_{\leq q}$ is the Littlewood--Paley projection with symbol $\chi_{q+1}(\xi)$.

Let us recall that for any $s\in \RR$, $p,q \in [1,\infty]$, the (in-homogeneous) Besov space $B^{s}_{p,q}$ is equipped with the following norm
$$
\| u\|_{ B^{s}_{p,q}} : = \big\| \big( \lambda_q^{s} \| u_q\|_p  \big)_{q \geq -1}\big\|_{ \ell^q(\ZZ)}.
$$
It is known that $B^{s}_{2,2} = H^s $.

We refer readers to \cite{MR2099035} for more background on harmonic analysis applying to fluid dynamics.

\subsection{Energy flux}
We will use the Littlewood--Paley decomposition to formulate our definitions of the anomalous dissipation and anomalous work for $(u,f) \in \mathcal{N}(Q_T)$. We focus on the endpoint $t\to T^-$ and start with the cutoff energy equality and let $(T-h,T) \subset (0,T)$.

Multiplying \eqref{eq:NSE} by $(u_{\leq q})_{\leq q }$ and integrating on $\TT^d \times (T-h,T )$ gives
\begin{align} \label{eq:energy_flux_LP}
\frac{1}{2}\|u_{\leq q} (t)\|_2^2\Big|_{t=T-h}^{t=T} + \int_{T-h}^{T} \|\nabla u_{\leq q} (s)\|_2^2 \,d\tau =-\int_{T-h}^{T} \langle  u\cdot \nabla u , (u_{\leq q})_{\leq q}\rangle + \langle u_{\leq  q} , f_{\leq q} \rangle \  \, d\tau .
\end{align}
Since $(u,f) \in \mathcal{N}(Q_T)$, we have $u \in L^2 H^1$ thanks to Theorem~\ref{thm:finite_input_imply_finite_dissip}. So the limit of the right-hand side as $q \to \infty$ always exists. However, as discussed in the introduction, the two energy flux terms on the right-hand side may not converge to the work $\int\langle u  , f   \rangle    \, dt  $, resulting a failure of energy balance. Let $\Pi_{q} (t ,h)$ and $\F_{q} (t ,h)$ be defined by 
\begin{align}\label{eq:def_Pi}
\Pi_{q}(t ,h) : =  \int_{[t-h,t+h]\cap [0,T]} \int_{\TT^d} (u\otimes u) :  \nabla(u_{\leq q})_{ \leq q } \, dx \, d\tau,
\end{align}
and respectively
\begin{align}\label{eq:def_F}
\F_{q} (t ,h)  : = \int_{[t-h,t+h]\cap [0,T]} \int_{\TT^d} \left(u_{\leq q} \cdot f_{\leq q} - u\cdot f \right) \, dx \, d\tau.
\end{align}

In what follows, we will simply refer to $\Pi_{q}$ and $ \F_{q}$ as flux terms. The energy balance through wavenumber $\lambda_q$ can then be written as
\begin{align}\label{eq:motivationfor_Pi_F}
\frac{1}{2}\|u_{\leq q} (t)\|_2^2\Big|_{t=T-h}^{t=T} + \int_{T-h}^{T} \|\nabla u_{\leq q} (s)\|_2^2 \,d\tau = \int_{T-h}^{T}   \langle u  , f  \rangle    \, d\tau + \Pi_q(T,h) + \F_q(T,h).
\end{align}

One particular usage of the two quantities $\Pi_{q}$ and $\F_{q} $ is to measure the possible jump discontinuity of the energy. Indeed, by taking a limit as $h\to 0^+$ and using the fact that $u \in L^2 H^1$, we obtain the formula for the energy jump at $t\to T^-$
\begin{align} \label{eq:energy_jump_formula}
\frac{1}{2}\|u(T)\|_2^2 - \lim_{t \to T -}\frac{1}{2}\|u(t)\|_2^2 =
\lim_{q \to \infty}  \left[ \Pi_q(T,h) + \F_q(T,h)\right].
\end{align}

In the literature (see \cite{MR2422377} for example), $\Pi_{q}$ is called the energy flux through wavenumber $\lambda_q $ which is used to capture the anomalous dissipation of the solution. In the unforced case, $\limsup| \Pi_{q}| =0$ immediately implies the energy equality. The conclusion also holds if we assume $f\in L^2_t H^{-1}$ in the forced case. 

However, for solutions in $\mathcal{N}(Q_T) $, the flux term $\F_{q}$ in \eqref{eq:motivationfor_Pi_F} may not converges to $0$ because we no longer have the bound $f\in L^2_t H^{-1}$. The failure of energy balance is due to the high-high interaction between the solution and the force. In Section \ref{sec:trivial_positive} we will show that many such examples can be obtained.

Based on the discussion above, it is natural to introduce the following.

\begin{definition}[Anomalous dissipation and anomalous work]\label{def:AD_AEI}
Let $(u,f) \in \mathcal{N}(Q_T)$. For any $t\in[0,T]$ the anomalous dissipation $\overline{\Pi}(t) $ at time $t$ is defined by 
\begin{align*}
\overline{\Pi} (t) =   \limsup_{q \to \infty} | \Pi_{q} (t ,h)|,
\end{align*}
and  the anomalous work $\overline{\F }(t) $ at time $t$  is defined by 
\begin{align*}
\overline{\F } (t) =   \limsup_{q \to \infty} | \F_{q} (t ,h)|,
\end{align*}
where  $\Pi_{q} (t ,h)$ and $\F_{q} (t ,h)$ are as in \eqref{eq:def_Pi} and \eqref{eq:def_F}. By a slight abuse of notation, we simply write $ \overline{\Pi}=\overline{\Pi} (T)  $ and $\overline{\F }  = \overline{\F } (T) $.
\end{definition}

Note that in the definition, we did not specify the value of the parameter $h>0$. The next lemma shows that any $T>h>0$ will give the same definition.
\begin{lemma}\label{lemma:welldefined}
The anomalous dissipation $\overline{\Pi}(t)$ and  the anomalous work $\overline{\F }(t)$ are well-defined. 
\end{lemma}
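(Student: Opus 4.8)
The plan is to show that neither $\overline{\Pi}(t)$ nor $\overline{\F}(t)$ depends on the auxiliary parameter $h$, so that the $\limsup$ quantities in Definition~\ref{def:AD_AEI} are genuinely intrinsic to $(u,f)$. Fix $t\in[0,T]$ and two admissible values $0<h_1<h_2$ with $t+h_2$ and $t-h_2$ still yielding a nonempty intersection window (for $t=T$ this means $h_i<T$). The key observation is that the difference $\Pi_q(t,h_2)-\Pi_q(t,h_1)$ is an integral of the same integrand $(u\otimes u):\nabla(u_{\leq q})_{\leq q}$ over the \emph{fixed} (i.e. $q$-independent) set $\big([t-h_2,t+h_2]\setminus[t-h_1,t+h_1]\big)\cap[0,T]$, which lies inside a compact subinterval of the open interval $(0,T)$ on which $(u,f)$ is smooth. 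Therefore I would first establish that, on any such compact subinterval $I\Subset(0,T)$,
\begin{equation*}
\lim_{q\to\infty}\int_{I}\int_{\TT^d}(u\otimes u):\nabla(u_{\leq q})_{\leq q}\,dx\,d\tau = -\int_{I}\int_{\TT^d}(u\cdot\nabla u)\cdot u\,dx\,d\tau = 0,
\end{equation*}
the last equality holding because $u(\tau,\cdot)$ is smooth and divergence-free for $\tau\in I$, so $\int_{\TT^d}(u\cdot\nabla u)\cdot u\,dx=0$ pointwise in $\tau$. The first equality is a standard Littlewood--Paley convergence fact: since $u\in L^2(0,T;H^1)$ by Theorem~\ref{thm:finite_input_imply_finite_dissip} and $u\in L^\infty(0,T;L^2)$, one has $u_{\leq q}\to u$ in $L^2_t H^1$ and $(u_{\leq q})_{\leq q}\to u$ likewise, while $u\otimes u\in L^\infty_t L^1_x$ is controlled so that the bilinear flux integral over the fixed window converges to its natural limit (equivalently, commutator estimates à la Constantin--E--Titi give the convergence directly). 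It follows that $\Pi_q(t,h_2)-\Pi_q(t,h_1)\to 0$ as $q\to\infty$, hence $\limsup_q|\Pi_q(t,h_2)|=\limsup_q|\Pi_q(t,h_1)|$, proving $\overline{\Pi}(t)$ is independent of $h$.

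For the anomalous work, the argument is analogous but slightly more delicate because we do not have $f\in L^2_tH^{-1}$. Here $\F_q(t,h_2)-\F_q(t,h_1)=\int_{J}\int_{\TT^d}(u_{\leq q}\cdot f_{\leq q}-u\cdot f)\,dx\,d\tau$ where $J$ is again the fixed $q$-independent window inside a compact subinterval $I\Subset(0,T)$. On $I$ both $u$ and $f$ are smooth, so for each fixed $\tau\in I$ the pairing $\langle u_{\leq q}(\tau),f_{\leq q}(\tau)\rangle\to\langle u(\tau),f(\tau)\rangle$ by the Littlewood--Paley decomposition applied to smooth functions, and moreover on $I$ one has uniform-in-$\tau$ (on compacta) bounds allowing dominated convergence in $\tau$: indeed $\langle u,f\rangle\in\mathcal{L}(0,T)$ together with local smoothness gives $\langle u,f\rangle\in L^1(I)$, and the partial sums $\langle u_{\leq q},f_{\leq q}\rangle$ are dominated using Plancherel plus the frequency-localized smoothness of $u$ and $f$ on $I$ (alternatively, use that $u\in L^2(I;H^1)$ and $f=\partial_t u-\Delta u+\D(u\otimes u)+\nabla p$ inherits enough structure on the \emph{interior} window $I$ to pair against $u_{\leq q}$ uniformly). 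Hence $\int_J\langle u_{\leq q},f_{\leq q}\rangle\,d\tau\to\int_J\langle u,f\rangle\,d\tau$, so $\F_q(t,h_2)-\F_q(t,h_1)\to 0$ and $\overline{\F}(t)$ is independent of $h$.

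Finally I would package the two conclusions: for any admissible $h$ the value $\limsup_q|\Pi_q(t,h)|$ equals the common value obtained for, say, any fixed reference $h_0$, and likewise for $\F$; in particular the definitions $\overline{\Pi}=\overline{\Pi}(T)$ and $\overline{\F}=\overline{\F}(T)$ make sense. I expect the main obstacle to be the $\F$ part: one must justify passing the limit $q\to\infty$ inside the time integral over the fixed window $J$ \emph{without} the clean duality $L^2_tH^1$--$L^2_tH^{-1}$, relying instead on interior smoothness of $f$ on $I\Subset(0,T)$ to get a dominating function in $\tau$. The $\Pi$ part and the bookkeeping of which window is $q$-independent are routine by comparison. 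One small point to handle carefully is the edge case $t$ near $0$ or $T$: the window $[t-h,t+h]\cap[0,T]$ is truncated, but the truncation only removes part of the interval and the remaining comparison window for two different $h$'s still sits in a compact subset of $(0,T)$ (for $t=T$ the relevant window is $[T-h_2,T-h_1]\Subset(0,T)$), so the argument is unaffected.
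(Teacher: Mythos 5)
Your proposal is correct and follows essentially the same route as the paper: the difference $\Pi_q(t,h_2)-\Pi_q(t,h_1)$ (and likewise for $\F_q$) is an integral over a fixed, $q$-independent window contained in a compact subinterval of $(0,T)$, where $(u,f)$ is smooth, so the flux converges to its natural limit (zero for $\Pi$ by incompressibility, the actual work for $\F$). The paper states this more tersely (handling only $\Pi$ explicitly and invoking the $C^1_{t,x}\cap L^\infty_tL^2\cap L^2_tH^1$ bounds on the compact window, with "mutatis mutandis" for $\F$), while you spell out the $\F$ case in more detail; the substance is the same.
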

\begin{proof}
We need to show that $\limsup_{q \to \infty} | \Pi_{q} (t ,h)| = \limsup_{q \to \infty} | \Pi_{q} (t ,h')|$ for any $h,h' >0$, and mutatis mutandis for $\overline{\F }(t)$. We only show this for $\overline{\Pi }(T)$. Assuming $h'>h>0$, it suffices to show
\begin{equation}\label{eq:lemma10}
\left| \int_{T-h'}^{T-h} \int_{\TT^d} (u\cdot \nabla)u \cdot (u_{\leq q})_{\leq q} \, dx \, dt \right| \to 0, \qquad \text{as $q \to \infty$}
\end{equation}

Since $u$ is smooth on $(0,T)$, in particular this implies that 
$$
\|u \|_{C^1_{t,x}([T-h',T-h]\times \Omega)}  <\infty.
$$
It is easy to see that $u \in C^1_{t,x} \cap L^\infty_t L^2 \cap  L^2_t H^1$  implies \eqref{eq:lemma10}  immediately.

\end{proof}

It is straightforward to verify that  $\overline{\Pi}(t)=\overline{\F }(t)=0$ for any solution $(u,f) \in \mathcal{N}(Q_T)$ and any $t\in(0,T)$. In what follows we are only interested in the nontrivial case $t\to T^-$.

\section{Positive results and simple examples}\label{sec:trivial_positive}
The goal of this section is two-fold. On one hand, we provide positive results to classify possible scenarios of the violation of the energy balance. This is reflected by the loss of continuity of the energy. On the other hand, we provide simple examples with positive anomalous work.

\subsection{Positive results}
We prove several positive results in this subsection. In particular, these results would imply Theorem \ref{thm:main_result_Pi_F_equal} and thus Theorem \ref{thm:main_result_Pi_F_0}.

First, let us show that for the solution class $\mathcal{N}(Q_T)$, the continuity of energy is equivalent to the energy equality on $[0,T]$.
\begin{lemma}\label{lemma:equivalency}
Suppose $(u,f) \in \mathcal{N}(Q_T)$, then $u \in C([0,T]; L^2)$ if and only if $(u,f)$ satisfies the energy equality on $[0,T]$.
\end{lemma}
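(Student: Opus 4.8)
The plan is to exploit the interior energy equality, which holds because $(u,f)$ is smooth on $Q_T$: for any $0<t_0<t_1<T$,
\[
\frac{1}{2}\|u(t_1)\|_2^2 - \frac{1}{2}\|u(t_0)\|_2^2 + \int_{t_0}^{t_1}\|\nabla u(\tau)\|_2^2\,d\tau = \int_{t_0}^{t_1}\langle u,f\rangle\,d\tau.
\]
By Theorem~\ref{thm:finite_input_imply_finite_dissip} we have $u\in L^2(0,T;H^1)$, and by the definition of $\mathcal{N}(Q_T)$ we have $\langle u,f\rangle\in\mathcal{L}(0,T)$, so the dissipation integral and the work integral both converge as $t_0\to 0^+$ and $t_1\to T^-$. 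Moreover, by Lemma~\ref{lemma:solutions_extended_0_T} the field $u$ extends weakly continuously to $[0,T]$, so $u(0)$ and $u(T)$ are well defined as weak $L^2$ limits, and weak lower semicontinuity gives $\|u(0)\|_2^2 \le \liminf_{t_0\to 0^+}\|u(t_0)\|_2^2$ and similarly at $T$. The only possible obstruction to the energy equality on the closed interval is therefore a jump in $\|u(t)\|_2^2$ at the endpoints.

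First I would prove the direction "$u\in C([0,T];L^2)$ $\Rightarrow$ energy equality on $[0,T]$". Fix $t_0,t_1\in[0,T]$; by the interior identity the equality holds for any $t_0',t_1'$ with $0<t_0'<t_1'<T$. Take $t_0'\downarrow t_0$ (or $t_0$ itself if $t_0>0$) and $t_1'\uparrow t_1$: the dissipation and work integrals pass to the limit by dominated convergence / the $\mathcal{L}(0,T)$ property, and $\|u(t_i')\|_2^2\to\|u(t_i)\|_2^2$ precisely because $u$ is \emph{strongly} continuous in $L^2$ on all of $[0,T]$. This yields the energy equality for all $t_0,t_1\in[0,T]$.

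For the converse, suppose $(u,f)$ satisfies the energy equality on $[0,T]$. On $(0,T)$ the function $t\mapsto\|u(t)\|_2^2$ is already continuous (indeed smooth); the issue is only at $t=0$ and $t=T$. Using the energy equality with one endpoint fixed at $T$ (resp.\ $0$) and letting the other approach $T$ (resp.\ $0$), and using $u\in L^2_tH^1$ and $\langle u,f\rangle\in\mathcal{L}(0,T)$ to see that the integrals are continuous in the upper/lower limit, we deduce $\lim_{t\to T^-}\|u(t)\|_2^2 = \|u(T)\|_2^2$ and $\lim_{t\to 0^+}\|u(t)\|_2^2 = \|u(0)\|_2^2$. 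Combined with weak continuity on $[0,T]$ (Lemma~\ref{lemma:solutions_extended_0_T}), convergence of norms plus weak convergence upgrades to strong convergence in the Hilbert space $L^2$, so $u\in C([0,T];L^2)$.

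The main obstacle is bookkeeping at the two endpoints rather than any deep estimate: one must be careful that "energy equality on $[0,T]$" is asserted for \emph{all} pairs $t_0,t_1$ including the endpoints, so that specializing to $t_1=T$ and letting $t_0\to T^-$ is legitimate, and that the weak-to-strong upgrade (norm convergence $+$ weak convergence $\Rightarrow$ strong convergence in $L^2$) is invoked correctly at both $0$ and $T$. No single step is hard, but the argument only works because both the finite-dissipation property (Theorem~\ref{thm:finite_input_imply_finite_dissip}) and the finite-work property ($\langle u,f\rangle\in\mathcal{L}(0,T)$) guarantee the relevant integrals extend continuously to the closed interval.
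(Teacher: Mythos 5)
Your proposal is correct and follows the same route as the paper's (much terser) proof: use the interior energy equality, note that $u\in L^2_tH^1$ and $\langle u,f\rangle\in\mathcal{L}(0,T)$ make the integrals continuous up to the endpoints, and pass to the limit in each direction. Your extra care with the weak-plus-norm-convergence upgrade to strong $L^2$ continuity is exactly the detail the paper leaves implicit.
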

\begin{proof}
Since $u \in L^2 H^1 $ and $ \langle u,f\rangle \in \mathcal{L}(0,T)$, the sufficiency is easy. Let us show the necessity. In this case one can then use the energy equality on $(0,T)$ and pass to the limit thanks to $u \in C([0,T]; L^2)$.
\end{proof}

Next, we finish the proof of Theorem \ref{thm:main_result_Pi_F_equal}. In the unforced cases, it is expected that solutions exhibiting anomalous dissipation would have a discontinuous energy. However, when a force $f\not \in L^2_t H^{-1}$ is present, there might be cancellations between anomalous dissipation and anomalous work. Thus the continuity of the energy only implies that these two effects are of the same strength. 

\begin{lemma}\label{lemma:main_positive}
Let $(u,f) \in \mathcal{N}(Q_T)$. If $u \in C([0,T ];L^2)$, then $\overline{\Pi} = \overline{\F}$.
\end{lemma}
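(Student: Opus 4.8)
The plan is to start from the Littlewood--Paley energy balance through wavenumber $\lambda_q$, equation \eqref{eq:motivationfor_Pi_F}, written on the interval $(T-h,T)$ with a fixed $h\in(0,T)$:
\[
\frac{1}{2}\|u_{\leq q}(T)\|_2^2 - \frac{1}{2}\|u_{\leq q}(T-h)\|_2^2 + \int_{T-h}^{T}\|\nabla u_{\leq q}\|_2^2\,d\tau = \int_{T-h}^{T}\langle u,f\rangle\,d\tau + \Pi_q(T,h) + \F_q(T,h).
\]
I would then pass to the limit $q\to\infty$ on the left-hand side. Since $u\in L^\infty_t L^2$ (and $u(T)$, $u(T-h)$ are the weak $L^2$-limits from Lemma \ref{lemma:solutions_extended_0_T}), the hypothesis $u\in C([0,T];L^2)$ promotes the weak convergence $u_{\leq q}(t)\rightharpoonup u(t)$ to strong convergence at $t=T$ and $t=T-h$, so $\|u_{\leq q}(t)\|_2^2 \to \|u(t)\|_2^2$ at both endpoints; moreover $u\in L^2_tH^1$ (Theorem \ref{thm:finite_input_imply_finite_dissip}) gives $\int_{T-h}^{T}\|\nabla u_{\leq q}\|_2^2\,d\tau \to \int_{T-h}^T\|\nabla u\|_2^2\,d\tau$ by dominated convergence. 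Hence the whole left-hand side converges to $\tfrac12\|u(T)\|_2^2 - \tfrac12\|u(T-h)\|_2^2 + \int_{T-h}^T\|\nabla u\|_2^2\,d\tau$.

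Next I would invoke the energy equality, which is available because $u\in C([0,T];L^2)$: by Lemma \ref{lemma:equivalency} the solution satisfies \eqref{eq:energy_equality} on $[0,T]$, so the limiting left-hand side is exactly $\int_{T-h}^{T}\langle u,f\rangle\,d\tau$. Subtracting this term from both sides of the limit of \eqref{eq:motivationfor_Pi_F} yields
\[
\lim_{q\to\infty}\bigl(\Pi_q(T,h) + \F_q(T,h)\bigr) = 0,
\]
i.e.\ $\F_q(T,h) = -\Pi_q(T,h) + o(1)$ as $q\to\infty$. Taking $\limsup$ of absolute values then gives $\overline{\F}(T) = \limsup_q|\F_q(T,h)| = \limsup_q|\Pi_q(T,h)| = \overline{\Pi}(T)$, which is the claim (recall that by Lemma \ref{lemma:welldefined} the value of $h$ is immaterial). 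This is essentially the energy-jump identity \eqref{eq:energy_jump_formula}, but applied over a fixed interval rather than in the $h\to0^+$ limit.

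The one genuinely non-routine point is the passage $\|u_{\leq q}(T)\|_2^2\to\|u(T)\|_2^2$: the extended value $u(T)$ is only a \emph{weak} $L^2$-limit of $u(t)$ as $t\to T^-$, so weak convergence $u_{\leq q}(T)\rightharpoonup u(T)$ alone is not enough. Here is where the hypothesis $u\in C([0,T];L^2)$ does the work: strong continuity at $t=T$ means $\|u(t)\|_2\to\|u(T)\|_2$, which combined with weak convergence on the closed interval upgrades everything to norm convergence, and then the standard fact that $\|u_{\leq q}(s)\|_2 \uparrow \|u(s)\|_2$ for each fixed $s$ together with uniform-in-$q$ control lets one exchange the limits in $q$ and $s$. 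Everything else is a soft limiting argument using only $u\in L^\infty_tL^2\cap L^2_tH^1$ and $\langle u,f\rangle\in\mathcal{L}(0,T)$.
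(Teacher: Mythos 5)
Your proof is correct and is essentially the paper's: you re-derive the energy jump formula \eqref{eq:energy_jump_formula} from \eqref{eq:motivationfor_Pi_F} and conclude $\lim_{q\to\infty}\bigl(\Pi_q(T,h)+\F_q(T,h)\bigr)=0$ from the $L^2$-continuity at $t=T$, which is exactly what the paper does by invoking \eqref{eq:energy_jump_formula} together with Lemma \ref{lemma:equivalency}. One small remark: the step $\|u_{\leq q}(T)\|_2^2\to\|u(T)\|_2^2$ that you single out as delicate in fact holds for any fixed $L^2$ function by Plancherel, with no need for continuity of $t\mapsto u(t)$; the hypothesis $u\in C([0,T];L^2)$ is only needed to identify $\|u(T)\|_2^2$ with $\lim_{t\to T^-}\|u(t)\|_2^2$ so that the jump vanishes.
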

\begin{proof}
By the energy jump formula \eqref{eq:energy_jump_formula},
\[
\lim_{q \to \infty}  \left[ \Pi_q(T,h) + \F_q(T,h)\right] = \frac{1}{2}\|u(T)\|_2^2 - \lim_{t \to T -}\frac{1}{2}\|u(t)\|_2^2 = 0,
\]
due to the continuity of $u$ in $L^2$. This immediately implies $\overline{\Pi}= \overline{\F} $. 
\end{proof}

Note that by Lemma \ref{lemma:equivalency} and Lemma \ref{lemma:main_positive}, we have thus obtained Theorem \ref{thm:main_result_Pi_F_equal}.

When the force is in the Leray-Hopf class $f \in L^2(0,T ;H^{-1})$, there is no anomalous work. In addition, any subsequence of $ |\Pi_{q}|$ will converge to the anomalous dissipation $\overline{\Pi}$ in this case.
\begin{lemma}\label{lemma:unique_Pi}
Suppose $(u,f) \in \mathcal{N}(Q_T) $ is such that $f \in L^p(0,T ;H^{-2+ \frac{2}{p}})$ for some $1 \leq p \leq 2$. Then the anomalous work is zero: $\overline{\F} = 0 $.

Moreover, the anomalous dissipation satisfies
$$
\overline{\Pi}(t)=  \lim_{q \to \infty} | \Pi_{q} (t ,h)| \quad \text{for any $t\in[0,T]$.}
$$ 
\end{lemma}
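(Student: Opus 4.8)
The plan is to split the claim into two parts: first showing $\overline{\F}=0$, and then showing that the $\limsup$ defining $\overline{\Pi}$ is actually a genuine limit. For the first part, I would estimate the flux term $\F_q(t,h)$ directly using the duality between $H^{-2+2/p}$ and $H^{2-2/p}$ in space and Hölder in time. Write
\[
\F_q(t,h) = \int_{[t-h,t+h]\cap[0,T]} \langle u_{\leq q} - u, f \rangle + \langle u_{\leq q}, f_{\leq q} - f\rangle\, d\tau,
\]
and since $f_{\leq q}$ is a frequency projection that commutes with $\Delta_r$, the second term can also be folded into a single pairing $\langle u_{\leq q} - u, f\rangle$ after moving the cutoff onto $u$ (this uses self-adjointness of the Littlewood--Paley projections and that $\langle u_{\leq q}, f\rangle = \langle u, f_{\leq q}\rangle$). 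So it suffices to control $\int \langle u - u_{\leq q}, f\rangle\, d\tau = \int \langle u_{>q}, f_{>q}\rangle\, d\tau$. By the Sobolev duality and the choice $f \in L^p_t H^{-2+2/p}$, we get a pointwise-in-time bound $|\langle u_{>q}, f_{>q}\rangle| \lesssim \|u_{>q}(\tau)\|_{H^{2-2/p}} \|f_{>q}(\tau)\|_{H^{-2+2/p}}$. Now $u \in L^\infty_t L^2 \cap L^2_t H^1$, so by interpolation $u \in L^{p'}_t H^{2/p}$ where $1/p + 1/p' = 1$ — wait, I need to check the exponents match: we want $\|u_{>q}\|_{H^{2-2/p}}$ integrable against $\|f_{>q}\|_{H^{-2+2/p}} \in L^p_t$, so I need $u \in L^{p'}_t H^{2-2/p}$. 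Since $2 - 2/p \le 1$ for $p \le 2$ and $p' \ge 2$, the interpolation inequality $\|u\|_{H^{2-2/p}} \lesssim \|u\|_{L^2}^{1-\theta}\|u\|_{H^1}^{\theta}$ with $\theta = 2 - 2/p$ gives $u \in L^{2/\theta}_t H^{2-2/p} = L^{p/(p-1)}_t H^{2-2/p} = L^{p'}_t H^{2-2/p}$ exactly. So Hölder in time closes the estimate with a finite bound.

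The point of the high-frequency projections is that this finite quantity goes to zero: since $u \in L^{p'}_t H^{2-2/p}$ and $f \in L^p_t H^{-2+2/p}$, the tails $\|u_{>q}\|_{H^{2-2/p}} \to 0$ in $L^{p'}_t$ and $\|f_{>q}\|_{H^{-2+2/p}} \to 0$ in $L^p_t$ as $q \to \infty$ by dominated convergence on the Littlewood--Paley square sums. Hence $\F_q(t,h) \to 0$ for every $h$, which gives $\overline{\F}(t) = 0$, and in particular $\overline{\F} = 0$.

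For the second part, I would use the energy-flux identity \eqref{eq:motivationfor_Pi_F} (or rather its analogue centered at a general $t$, together with \eqref{eq:energy_jump_formula}): since $u \in L^2_t H^1$ the dissipation and energy terms in \eqref{eq:motivationfor_Pi_F} converge as $q \to \infty$, and we have just shown $\F_q(t,h) \to 0$. Therefore $\Pi_q(t,h)$ itself converges to a limit as $q \to \infty$ — namely $\Pi_q(t,h) \to \tfrac12\|u_{\leq q}(\cdot)\|_2^2\big|$-limit $+ \int \|\nabla u_{\leq q}\|_2^2 - \int\langle u, f\rangle$ evaluated in the limit, all of which exist. Since the sequence converges, $\limsup_q |\Pi_q(t,h)| = \lim_q |\Pi_q(t,h)|$, which is the claim.

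The main obstacle is getting the interpolation exponents in the first part exactly right and confirming the endpoint cases $p = 1$ (where $f \in L^1_t L^2$ and one needs $u \in L^\infty_t L^2$, which is exactly what is available) and $p = 2$ (where $f \in L^2_t H^{-1}$ and one needs $u \in L^2_t H^1$, again exactly available); these endpoints are tight, so there is no slack, and one must be careful that the convergence $\|u_{>q}\|_{H^{2-2/p}} \to 0$ in $L^{p'}_t$ genuinely holds rather than just boundedness — but this follows from the fact that $u \in L^{p'}_t H^{2-2/p}$ (a bona fide membership, via interpolation) so its Littlewood--Paley tail sums are dominated and vanish pointwise in time.
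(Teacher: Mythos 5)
Your proposal is correct and follows essentially the same route as the paper: the anomalous work is killed by pairing the high-frequency tails of $u$ and $f$ via the duality $H^{2-2/p}$--$H^{-2+2/p}$, with $u\in L^{p'}_tH^{2-2/p}$ obtained by interpolating $L^\infty_tL^2\cap L^2_tH^1$, and the second claim follows from the energy jump formula since $\Pi_q+\F_q$ converges and $\F_q\to 0$. The only cosmetic caveat is at $p=1$ (so $p'=\infty$), where the tail $\|u_{>q}\|_{L^\infty_tL^2}$ need not vanish, only stay bounded; the product still tends to zero because $\|f_{>q}\|_{L^1_tL^2}\to0$, so nothing breaks.
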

\begin{proof}
Since $(u,f) \in \mathcal{N}(Q_T) $, we have $ u \in L^\infty_t L^2 \cap L^2_t H^1$, and hence $ u \in L^q_t H^\alpha$, where $q$ is the dual H\"older exponent of $p$ and $\alpha:=  2- \frac{2}{p}$. Since also $f\in L^p_t H^{-\alpha}$, by duality and interpolation, we have
\begin{align}
\left|\int_{T-h}^T \int_{\TT^d} \left(u_{\leq q} \cdot f_{\leq q} - u\cdot f \right) \, dx \, d\tau\right| &\leq \int_{T-h}^T \|u_{\geq q} \|_{H^{\alpha}} \|f_{\geq q} \|_{H^{-\alpha}} \,dt\\
&\lesssim     \|u_{\geq q} \|_{L^{q}_t H^{\alpha}} \|f_{\geq q} \|_{L^p_t H^{-\alpha}}\\
&\to 0,
\end{align}
as $q\to \infty$, and the anomalous work is zero $\overline{\F} = 0 $ by definition \eqref{eq:def_F}. 

As for the second claim, we only show this for $t=T$. Due to the energy jump formula \eqref{eq:energy_jump_formula}, the limit
\[
\lim_{q \to \infty}  \left[ \Pi_q(T,h) + \F_q(T,h)\right]
\]
exists. Since $\overline{\F} = 0$, we also have that $\lim_{q \to \infty} \F_q(T,h) =0$, and hence $\lim_{q \to \infty}|\Pi_q(T,h)|$ exists, and is equal to $\overline \Pi$.

\end{proof}

Finally, we note that $B^{1/3}_{3,\infty}$ is the Onsager's space as in the unforced setting. This follows immediately from the estimates on the flux in \cite{MR2422377} since the definition of the anomalous dissipation is the same.
\begin{theorem}[{\cite[Theorem 6.1]{MR2422377}}]\label{thm:onsager}
If $(u,f) \in \mathcal{N}(Q_T) $ and
$$
u \in L^3_t B^{1/3}_{3,\infty} ,
$$ then $\overline{\Pi}=0$.
\end{theorem}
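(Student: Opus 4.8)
The plan is to reduce the statement to the flux estimates already established in \cite{MR2422377}. Recall from Definition~\ref{def:AD_AEI} that $\overline{\Pi} = \limsup_{q\to\infty} |\Pi_q(T,h)|$ where, up to the harmless double-projection (the difference between $(u_{\leq q})_{\leq q}$ and $u_{\leq q}$ in \eqref{eq:def_Pi} contributes only frequencies $\sim \lambda_q$ and is controlled exactly as in Lemma~\ref{lemma:welldefined} using $u\in L^2_tH^1$), we have
\[
\Pi_q(T,h) = \int_{T-h}^{T}\int_{\TT^d} (u\otimes u):\nabla u_{\leq q}\,dx\,d\tau + o(1)\quad\text{as }q\to\infty .
\]
So it suffices to show that $\int_{T-h}^{T}\langle (u\cdot\nabla u)_{\leq q}, u_{\leq q}\rangle\,d\tau \to 0$. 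First I would note that this is precisely the quantity for which \cite[Theorem~6.1]{MR2422377} supplies a pointwise-in-time bound of the form $|\langle (u\cdot\nabla u)_{\leq q}, u_{\leq q}\rangle| \lesssim \sum_{p\geq q-2} \lambda_p \|\Delta_p u\|_3 \|u_{\leq p}\|_3^2$ (a paraproduct/commutator decomposition into high-high, high-low and low-high interactions), which for $u(t)\in B^{1/3}_{3,\infty}$ is summable and yields $|\langle (u\cdot\nabla u)_{\leq q}, u_{\leq q}\rangle| \lesssim \|u(t)\|_{B^{1/3}_{3,\infty}}^3$, with the bound tending to $0$ as $q\to\infty$ for each fixed $t$.

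The second step is to upgrade this pointwise-in-time decay to decay of the time integral. Since $u\in L^3_t B^{1/3}_{3,\infty}$ on $(0,T)$, the function $t\mapsto \|u(t)\|_{B^{1/3}_{3,\infty}}^3$ is in $L^1(T-h,T)$, so it serves as a dominating function for the integrands $|\langle (u\cdot\nabla u)_{\leq q}(t), u_{\leq q}(t)\rangle|$ uniformly in $q$. Combined with the pointwise convergence to $0$ for a.e. $t$, the dominated convergence theorem gives $\int_{T-h}^T \langle (u\cdot\nabla u)_{\leq q}, u_{\leq q}\rangle\,d\tau \to 0$, hence $\overline{\Pi} = 0$.

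The only genuine point requiring care — and the one I would expect to be the main obstacle, though it is really bookkeeping rather than difficulty — is verifying that the flux estimate of \cite{MR2422377} applies verbatim in our inhomogeneous periodic setting with our Littlewood--Paley conventions from Section~\ref{subsec:LP_def}, and that the low-frequency piece $q=-1$ (and the endpoint terms in the telescoping sums) does not spoil the decay. Since $u\in L^2_tH^1 \cap L^\infty_t L^2$ on $Q_T$, the very-low-frequency contributions are trivially bounded and vanish in the limit, so this causes no trouble. One should also confirm that the concluding remark in the excerpt — that $u\in L^3_t B^{1/3}_{3,\infty}$ forces $L^2$-continuity at $t=T$ via the energy jump formula \eqref{eq:energy_jump_formula} together with $\overline{\F}$ — is \emph{not} needed here: the present statement asserts only $\overline{\Pi}=0$ and follows purely from the flux estimate, independently of any hypothesis on $f$.
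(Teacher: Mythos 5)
You should first note that the paper does not actually reprove this statement: it is imported verbatim from \cite[Theorem 6.1]{MR2422377}, with the only observation being that the definition of $\overline{\Pi}$ here coincides with the flux there. Your reduction of the double projection $(u_{\leq q})_{\leq q}$ to $u_{\leq q}$ is fine, but the core of your argument contains a genuine gap: the claim that the pointwise-in-time flux bound ``tends to $0$ as $q\to\infty$ for each fixed $t$'' when $u(t)\in B^{1/3}_{3,\infty}$. The correct fixed-time flux estimate has the form $|\Pi_q|\lesssim \sum_p 2^{-\alpha|p-q|}\,\lambda_p\|\Delta_p u\|_3^3$, which for $u(t)\in B^{1/3}_{3,\infty}$ is \emph{uniformly bounded} by $\|u(t)\|_{B^{1/3}_{3,\infty}}^3$ but does \emph{not} decay in $q$ unless $\lambda_p^{1/3}\|\Delta_p u\|_3\to 0$, i.e.\ unless $u(t)$ lies in the strictly smaller space $B^{1/3}_{3,c(\NN)}$. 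This $\ell^\infty$-versus-$c_0$ distinction is exactly what makes $B^{1/3}_{3,\infty}$ Onsager-critical: if your pointwise claim were true, dominated convergence would give energy conservation for Euler weak solutions in $L^3_tB^{1/3}_{3,\infty}$, which is precisely the endpoint \cite{MR2422377} do not reach. Indeed, Lemma \ref{lemma:w_flux} of this very paper constructs a time-independent divergence-free field whose flux through \emph{every} shell is bounded below at the rate permitted by its Besov regularity, refuting pointwise decay. Separately, the intermediate bound you quote, $\sum_{p\geq q-2}\lambda_p\|\Delta_p u\|_3\|u_{\leq p}\|_3^2$, actually diverges for $u\in B^{1/3}_{3,\infty}$ (its $p$-th term is of order $\lambda_p^{2/3}\|u\|_{B^{1/3}_{3,\infty}}^3$), so it cannot be the estimate that yields $\lesssim\|u(t)\|_{B^{1/3}_{3,\infty}}^3$; the correct one exploits the commutator structure of $(u\otimes u)_{\leq q}-u_{\leq q}\otimes u_{\leq q}$.

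What rescues the theorem in the present (viscous, finite-dissipation) setting is exactly the hypothesis your closing remark discards as unnecessary: $(u,f)\in\mathcal{N}(Q_T)$ guarantees $u\in L^2_tH^1_x$ by Theorem \ref{thm:finite_input_imply_finite_dissip}, and the proof in \cite{MR2422377} extracts the required decay from the \emph{time-integrated} flux, using that $\int_0^T\lambda_q^2\|\Delta_q u\|_2^2\,dt\to 0$ (the tail of a convergent series) together with Bernstein's inequality to beat the near-diagonal terms, rather than arguing pointwise in time. You are right that no hypothesis on $f$ itself enters, but the membership in $\mathcal{N}(Q_T)$ is used essentially through the finite dissipation; a correct write-up must route the decay through $L^2_tH^1_x$, and ``pointwise decay plus dominated convergence'' cannot be repaired within $B^{1/3}_{3,\infty}$ alone.
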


We also quote the positive result of \cite{1802.05785} in our settings which improves upon the condition of Shinbrot \cite{MR0435629}. 
\begin{theorem}[{\cite[Theorem 1.1]{1802.05785}}]\label{theorem:weak_lbg}
Let $(u,f) \in \mathcal{N}(Q_T)$ and $1 \leq \beta<p \leq \infty$ such  that $\frac{2}{p} + \frac{1}{\beta}<1$. If~\footnote{Note that $L^{p,w}$ denotes the weak Lebesgue space.}
\begin{equation}\label{eq:weak_lbg}
u \in L^{\beta,w}_t B^{\frac{2}{\beta}+\frac{2}{p}-1}_{p,\infty}  ,
\end{equation}
then $\overline{\Pi}=0$.
\end{theorem}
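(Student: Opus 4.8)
The plan is to bound the flux directly and show $\overline{\Pi}=\limsup_{q\to\infty}|\Pi_q(T,h)|=0$. By Lemma~\ref{lemma:welldefined} the value does not depend on $h$, so I fix any $h\in(0,T)$ and prove $\Pi_q(T,h)=\int_{T-h}^{T}\int_{\TT^d}(u\otimes u):\nabla(u_{\leq q})_{\leq q}\,dx\,dt\to 0$. Two facts are available for free. First, $(u,f)\in\mathcal{N}(Q_T)$ gives by Theorem~\ref{thm:finite_input_imply_finite_dissip} the a priori bounds $u\in L^\infty(0,T;L^2)\cap L^2(0,T;H^1)$, which I will use as an extra source of integrability alongside the hypothesis $u\in L^{\beta,w}_tB^{\frac{2}{\beta}+\frac{2}{p}-1}_{p,\infty}$. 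Second, for each fixed $t\in(T-h,T)$ the field $u(\cdot,t)$ is smooth, so the inner spatial integral converges to $\int_{\TT^d}u\cdot\nabla u\cdot u\,dx=0$; that is, the time-integrand tends to $0$ pointwise in $t$. It therefore only remains to produce a $q$-uniform dominating bound, and in fact a quantitative one.

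The heart of the matter is the commutator/paraproduct estimate for the energy flux of Constantin-E-Titi \cite{MR1298949} and Cheskidov-Constantin-Friedlander-Shvydkoy \cite{MR2422377}. Using incompressibility in the form $\int_{\TT^d}u_{\leq q}\cdot\nabla u_{\leq q}\cdot u_{\leq q}\,dx=0$ and the self-adjointness of $\Delta_{\leq q}$, one rewrites the spatial flux as $\int_{\TT^d}r_q(u,u):\nabla u_{\leq q}\,dx$ with $r_q(u,u)=(u\otimes u)_{\leq q}-u_{\leq q}\otimes u_{\leq q}$, a commutator whose active frequencies are essentially $\gtrsim\lambda_q$. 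A Bony decomposition of $r_q$, Bernstein's inequality to redistribute the derivative onto dyadic blocks, and Hölder in space adapted to the exponent $p$ then bound $|\Pi_q(t)|$ by a sum over frequencies $\gtrsim\lambda_q$ of products of three dyadic pieces of $u$; one distributes these three slots between the rough space $B^{\frac{2}{\beta}+\frac{2}{p}-1}_{p,\infty}$ and the energy spaces $L^2,H^1$, inserts $\|\Delta_{p'}u\|_p\le\lambda_{p'}^{-\sigma}\|u\|_{B^\sigma_{p,\infty}}$ with $\sigma=\frac{2}{\beta}+\frac{2}{p}-1$, and integrates in time by Hölder in the Lorentz scale (pairing the weak space $L^{\beta,w}_t$ against $L^{\beta',1}_t$). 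The scaling relation between $\sigma$, $p$ and $\beta$ makes all exponents match, and the strict inequality $\frac{2}{p}+\frac{1}{\beta}<1$ turns the resulting frequency series into a geometric one with ratio strictly less than $1$, yielding
\[
|\Pi_q(T,h)|\ \lesssim\ \lambda_q^{-\delta}\,\|u\|_{L^\infty_tL^2}^{a}\,\|u\|_{L^2_tH^1}^{b}\,\|u\|_{L^{\beta,w}_tB^{\sigma}_{p,\infty}}^{c}
\]
for some $\delta=\delta(p,\beta)>0$ and admissible $a+b+c=3$. Letting $q\to\infty$ gives $\Pi_q(T,h)\to 0$, hence $\overline{\Pi}=0$. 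Equivalently, the same estimate without the $\lambda_q^{-\delta}$ gain, summed from a fixed base frequency, furnishes a $q$-uniform dominating function in $L^1(T-h,T)$, so the pointwise vanishing of the integrand and dominated convergence finish the proof.

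The real obstacle is the bookkeeping in the previous paragraph: one must choose the three Hölder/interpolation exponents so that a single hypothesis on $u$ simultaneously (i) makes the spatial frequency sum converge with a strictly positive gain $\lambda_q^{-\delta}$ and (ii) is integrable in time against the prescribed weak-in-time norm --- and this is exactly why the Besov regularity is forced to be $\sigma=\frac{2}{\beta}+\frac{2}{p}-1$ and why the admissible range is $1\le\beta<p$ with $\frac{2}{p}+\frac{1}{\beta}<1$; specializing to the strong-in-time endpoint recovers Shinbrot's condition \cite{MR0435629}, while the weak space $L^{\beta,w}_t$ is the genuine improvement of \cite{1802.05785}. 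Because the inequality $\frac{2}{p}+\frac{1}{\beta}<1$ is strict, the borderline, logarithmically divergent case never occurs, so --- in contrast to the critical Onsager space $L^3_tB^{1/3}_{3,\infty}$ of Theorem~\ref{thm:onsager} --- no $\liminf$ or Besov-$c_0$ refinement is needed; and since $u$ may blow up as $t\to T^-$, all of the above must be carried out on the slab $(T-h,T)$ using only the global-in-time bounds from $\mathcal{N}(Q_T)$, which is precisely what the trilinear flux estimate is designed to accommodate.
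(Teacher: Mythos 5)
The paper does not actually prove this statement: Theorem~\ref{theorem:weak_lbg} is quoted verbatim from \cite{1802.05785}, and the only indication of its proof given here is the remark that it ``relies on the flux estimates in \cite{MR2422377}.'' Your outline is faithful to that source's strategy --- reduce $\overline{\Pi}$ to the Constantin--E--Titi commutator $r_q(u,u)=(u\otimes u)_{\le q}-u_{\le q}\otimes u_{\le q}$, run a Bony/Bernstein analysis to bound the flux by a geometrically weighted sum of dyadic trilinear quantities, and close the time integral by H\"older in the Lorentz scale, pairing $L^{\beta,w}_t$ against an $L^{\beta',1}_t$ bound obtained by real interpolation of $L^\infty_tL^2\cap L^2_tH^1$; you also correctly note that the strict inequality $\frac{2}{p}+\frac{1}{\beta}<1$ (which forces $p>3$) yields a genuine $\lambda_q^{-\delta}$ gain, so no $B^{s}_{p,c_0}$-type refinement is needed, and that Theorem~\ref{thm:finite_input_imply_finite_dissip} supplies the finite dissipation this argument requires in the class $\mathcal{N}(Q_T)$.

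That said, as written this is a roadmap rather than a proof: the decisive step --- choosing the three H\"older/interpolation exponents so that exactly the norm $\|u\|_{L^{\beta,w}_tB^{\sigma}_{p,\infty}}$ with $\sigma=\frac{2}{\beta}+\frac{2}{p}-1$ appears and the frequency series contracts --- is asserted (``the scaling relation makes all exponents match'') but never carried out, and you yourself identify it as ``the real obstacle.'' Since that bookkeeping \emph{is} the content of the theorem, the proposal cannot stand alone; it would need either the explicit trilinear estimate or an honest citation of \cite[Theorem~1.1]{1802.05785}, which is what the paper does. One smaller caveat: your closing remark that the estimate ``without the $\lambda_q^{-\delta}$ gain'' already furnishes a $q$-uniform $L^1_t$ dominating function is too quick --- with third Besov index $\infty$ the time-integrated dyadic quantities are merely bounded in the frequency index, not summable, so dominated convergence is not available independently of the gain; it is precisely the strictly subcritical exponent that rescues the argument.
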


\begin{remark}
Note that Theorem \ref{theorem:weak_lbg} requires finite dissipation while Theorem \ref{thm:onsager} holds true for any weak solutions $(u,f)$. Even though Theorem \ref{theorem:weak_lbg} relies on the flux estimates in \cite{MR2422377}, there is no mutual implication between these two conditions, see \cite[Section 5]{1802.05785} for details.
\end{remark}

\subsection{Trivial examples}
We now switch from the positive results to discussing various examples with anomalous work. As one shall see,  anomalous work is perhaps the simplest way to produce a jump discontinuity in the energy. 

\subsubsection{Oscillations}\label{eg:1}
The first example is based on shear flows glued together by a partition in time. We show that anomalous work can be caused by pure oscillations.

Given $T>0$, let us choose a sequence of smooth cutoffs $h_i \in C^\infty_c(\RR)$ such that 
$$
\Supp h_i \subset [T-  2^{-\beta (i-1)}T,T- 2^{-\beta (i+1)}T],
$$ 
and 
$$
\sum_{} h_i(t)^2=1 \quad \text{for $t \in [0,T)$.}
$$ 

\begin{example}
Let $k ,l \in \ZZ^d $ so that $k\cdot l =0$ and $\beta>2$. Consider the NSE on torus $\TT^d$ for $d\geq 3$ and define the solution
$$
u(x,t):= 2\sum_{i\geq 1} h_i(t)\sin(2^i l\cdot x)k,
$$ 
for $t\in[0,T)$ and the force
$$
f(t,x):=\partial_t u -\Delta u. 
$$
\end{example}

In this case, obviously $u $ and $f$ are smooth on $Q_T$. Due to the choice of time cutoffs we also have $\|u(t) \|_2^2 = 1$ for $t>0$, and hence $\frac{d}{dt}\|u(t) \|_2^2=0 $. Since $\beta>2$, it is easy to verify that $u \in L^2 H^1$ and due to the smoothness of $u$ and $f$ as well as the energy equality on $(0,T)$ we get 
\begin{align}
\langle u,f \rangle =  \| \nabla u(t) \|_2^2  \quad \text{for all }0<t <T,
\end{align}
which implies $\langle u,f \rangle \in \mathcal{L}(0,T) $ (in fact $L^1(0,T)$). It is also easy to see that weak $L^2$ limits of $u$ exists as $t\to 0^+, T^-$, and the latter one is zero. Therefore the energy is discontinuous at $t=T$. Also, the limits of $f(t)$ exist in $\mathcal{D}'$ as $t \to 0^+$ and $t\to  T^- $. Thus $(u,f) \in \mathcal{N}(Q_T)$. 

We claim that the jump discontinuity at $t=T$ is caused by the anomalous work. Indeed, it is not hard to see that $u \in L^\infty_{t,x}(Q_T)$ or even $ \D(u\otimes u) =0$, and hence the anomalous dissipation $\overline{\Pi }=0$ for this solution.

\subsubsection{Concentration}
The next example shows that the development of concentrations can also lead to anomalous work. Here we use the Concentrated Mikado flows used in \cite{MR3951691} which was based on the Mikado flows introduced by Daneri and Sz\'ekelyhidi, Jr. in \cite{MR3614753}.

Recall that for dimension $d \geq 3$ and $k\in \ZZ^d$ Concentrated Mikado flows $\mathbb{W}_k^{\mu} \in C_0^\infty(\TT^d ) $ are periodic pipe flows whose supported pipes have small radius of size $~\mu^{-1}$ with direction $k$; See Section 2 in \cite{MR3951691} for the exact definition. 

Let us fix some constant $0<\alpha<\frac{1}{2}$ and introduce a time-dependent  concentration parameter 
$$
\mu(t):= \frac{1}{(T-t)^\alpha}   .
$$

\begin{example} \label{eg:2}
Consider the NSE on $\TT^d$ for $d\geq 3$. Let $\mathbb{W}_k^{\mu} \in C_0^\infty(\TT^d ) $ be a Concentrated Mikado flow. Define the solution 
$$
u(x,t)= \mathbb{W}_k^{\mu(t)}
$$ 
and the forcing $f(x,t):=\partial_t u -\Delta u  $ on $(0,T)$.
\end{example}

Since $\D(u\otimes u)=0$ for all $t\in(0,T)$, $(u,f) \in \mathcal{N}(Q_T)$. As before,  $u$ and $f$ are smooth on $Q_T$ and the weak $L^2$ limit of $u(t)$ is zero as $t\to T^-$. So the energy is discontinuous at $t=T$. Note that due to the scaling property of the Concentrated Mikado flow, $\| \nabla v(t)\|_2^2 = c \mu^2$ for some fix constant $c$, which implies $u \in L^2(0,T;H^1)$ since $0<\alpha<\frac{1}{2}$. Note that $ \|u(t) \|_2^2$ is a constant on $(0,T)$, and due to the energy equality on $(0,T)$ we get 
\begin{align}
\langle u,f \rangle =  \| \nabla u(t) \|_2^2  \quad \text{for all }0<t <T,
\end{align}
which implies $\langle u,f \rangle \in \mathcal{L}(0,T) $. So $(u,f) \in \mathcal{N}(Q_T)$. 
It is obvious that the energy flux $\overline{\Pi }=0$ since $\D(u\otimes u)=0$. Therefore, the discontinuity of the energy must be caused by the anomalous work.

\subsubsection{Tao's blowup solution of averaged NSE} 
In a remarkable paper \cite{MR3486169}, Tao proved a finite time blowup of a smooth solution to an averaged NSE, which satisfies many classical harmonic analysis estimates of the original NSE. Here, we show that this solution is also in our solution class $\mathcal{N}(Q_T)$ for the original NSE.

Let $u: [0,T^*) \to C^\infty(\RR^3)$ be the blowup solution~\footnote{ One may notice that $u$ is only $H^{10}$ as stated in \cite{MR3486169}, however, smoothness can be obtained by bootstrapping as the initial data is Schwartz  and $\widetilde{B}$ satisfies the same estimates as $B$  on every Sobolev space $W^{s,p} $ for $1<p<\infty$. See the discussion after Remark 1.4 in \cite{MR3486169}.} to the averaged NSE in  \cite[Theorem 1.5]{MR3486169}. So, $u$ is a classical solution to $\partial_t u -\Delta u +\widetilde{B}(u,u) = 0$ that blows up at time $T^*$ with a Schwartz initial data $u_0$.

\begin{example}\label{eg:3}
Consider the forcing $f := B(u,u) - \widetilde{B}(u,u)$. Then $(u,f) \in \mathcal{N}(Q_{T^*})$ and $\langle u,f \rangle = 0$ for all $t \in [0,T^*)$.  
\end{example}

The weak $L^2$ limit  of $u$ exists as $t \to T^*$ by the weak formulation of the average NSE. Therefore by a standard argument, one can show that the force $f  = B(u,u) - \widetilde{B}(u,u)$ can also be extended to $t=T^*$ as a continuous distribution. All conditions in Definition \ref{def:smooth_solution} has been verified, and thus $(u,f) \in \mathcal{N}(Q_{T^*})$.

Since the averaged bilinear operator $\widetilde{B} $ still fulfills the orthogonality $\langle \widetilde{B}(u,v),v \rangle =0$, then by construction $\langle u,f \rangle = 0$ for all $t \in [0,T^*)$.

\begin{remark}
In the above example, the work is zero for $(u,f)$. However, at the moment it is not known whether the energy has a jump at $t\to T^-$ since only the blowup of higher Sobolev norms is shown~\footnote{It seems that the energy is continuous since the blowup portion of the solution has energy tending to zero according to \cite{Priv_Comm_Tao2019}.}, see \cite[Proposition 6.3 ]{MR3486169}. It is also not clear whether anomalous work $\overline{\F} =0$ or even anomalous dissipation $\overline{\Pi}=0$ as the solution $u$ was not designed to saturate the energy flux for the original NSE. 
\end{remark}

\subsubsection{Concentration with zero input}
In this last example, we show that one can also achieve zero work done in Example \ref{eg:2} by incorporating the decay of heat flow. Denote by $ v(x,t)$  the solution in Example \ref{eg:2}. Let $h \in C^\infty(\RR^+)$ be the unique solution of ODE
\begin{equation}\label{eq:ODE}
\frac{dh }{dt} = -2\| \nabla v(t)\|_2^2 h(t)
\end{equation}
with initial data $h(0)=1$. As before, $\| \nabla v(t)\|_2^2 = c \mu^2$ for some fix constant $c$. We thus obtain explicitly 
$$
h(t) = h(T)e^{ \frac{2c}{1-2\alpha}(T-t)^{1-2\alpha}}.
$$

\begin{example}
Consider the NSE on $\TT^d$ for $d\geq 3$ and define a solution 
\begin{align}
u(x,t):= h^{\frac{1}{2}} (t)v(x,t) \quad \text{and} \quad f:= \partial_t u -\Delta u,
\end{align}
for $t\in(0,T)$.
Then $(u,f) \in \mathcal{N}(Q_t)$, $\langle u ,f \rangle =0$ for all $t\in (0,T)$ and the energy has a jump at $t=T$.
\end{example}

Arguing as in Example \ref{eg:2}, $u\in L^2 H^1$ since $h(t)\leq 1$.
Since $h$ solves the ODE \eqref{eq:ODE},
$$
\frac{d}{dt}\|u \|_2^2 = -2\|\nabla u \|_2^2.
$$
This implies $\langle u ,f \rangle =0$ due to the energy equality on $(0,T)$.

The energy has a jump at $t=T$ as in Example \ref{eg:2} because $u(T)=0$, but the limit of the energy is positive as $h(T)>0$.

\section{Examples with zero anomalous work}\label{sec:zeroF_nonzeroPi}
In this section, we will finish the proof of Theorem \ref{thm:main_result_Pi_without_F}. The idea is to glue vector fields with large flux in a way that the leading cancellation in $\F_{n}$ at the level of energy results in zero anomalous work. The construction of vector fields is inspired in part by Eyink's example in \cite{MR1302409} and by the constructions in \cite{MR2566571} by Shvydkoy and the first author. It is also worth noting that there is no cancellation in the final solution $(u,f)$, which is the main reason that the force $f\not \in L^2_t H^{-1}$.

\subsection{Positive energy flux through each shell}
We start with constructing vector fields supported in dyadic Fourier shells with optimal energy flux. The goal is to arrange Fourier modes so that the energy flux is saturated. Since we are dealing with the NSE, intermittent flows take place of homogeneous ones comparing to \cite{MR1302409,MR2422377}.  
\begin{lemma}\label{lemma:w_n_flux}
For any dimension $d\geq 3$ and $0\leq \beta < d$, there exist $N\in\NN$ and  vector fields $w_n \in C^\infty(\TT^d)$ for $n \geq N$ with the following properties. 
\begin{enumerate}
\item The Fourier support of $w_n$ is in a shell of radius $\lambda_n$: 
\begin{equation} \label{eq:Fourier_support_w_n}
\Supp \mathcal{F}w_n \subset \big\{k: \textstyle   \lambda_n \leq |k| \leq  \lambda_{n+1} \big\};
\end{equation}
\item $L^2$ norm is normalized: $\|w_n \|_2 =1 $, and the $L^p$ scaling
\begin{equation}\label{eq:lp_scaling_wn}
\|w_n \|_p \sim_p \lambda_n^{ (\nicefrac{1}{2}-\nicefrac{1}{p}) \beta},
\end{equation}
holds for all $p \in (  1,\infty]$;
\item The energy flux through Littlewood-Paley shells satisfies
\begin{equation}\label{eq:w_n_flux}
\lim_{n\to \infty} \lambda_n^{-1-\frac{\beta}{2}}\int_{\TT^d} \D (w_n \otimes w_n )_{\leq n} \cdot (w_n )_{\leq n} \, dx=2,
\end{equation}
and $\int_{\TT^d} \D (w_n \otimes w_n )_{\leq q} \cdot (w_n )_{\leq q} =0$ when $q\neq n$.

\end{enumerate}
\end{lemma}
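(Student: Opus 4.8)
The plan is to take each $w_n$ to be an \emph{intermittent Beltrami-type flow}: a single spatially concentrated bump multiplying a short sum of plane waves whose wavevectors form a resonant triad lying inside the dyadic shell $\{\lambda_n\le|k|\le\lambda_{n+1}\}$, the triad producing all of the energy flux at scale $n$. Concretely, set the concentration scale $\sigma_n:=\lambda_n^{-\beta/d}$ — note $0\le\beta<d$ forces $\sigma_n^{-1}=\lambda_n^{\beta/d}=o(\lambda_n)$, which is exactly the room needed to fit a width-$\sigma_n$ bump inside one Littlewood--Paley shell. Fix a profile $\Phi\in C^\infty(\RR^d)$ with $\Phi\ge0$, $\|\Phi\|_{L^2(\RR^d)}=1$ and $\widehat\Phi\in C_c^\infty(B_1)$ (e.g. $\Phi=|\psi|^2$ with $\widehat\psi\in C_c^\infty(B_{\nicefrac{1}{2}})$), rescale to $\Phi_{\sigma_n}(x)=\sigma_n^{-d/2}\Phi(x/\sigma_n)$ and periodize; then $\widehat{\Phi_{\sigma_n}}$ is supported in $B_{\sigma_n^{-1}}$, $\Phi_{\sigma_n}$ is a trigonometric polynomial concentrated in a ball of radius $\sim\sigma_n$, and $\|\Phi_{\sigma_n}\|_p\sim_p\sigma_n^{d(\nicefrac{1}{p}-\nicefrac{1}{2})}=\lambda_n^{(\nicefrac{1}{2}-\nicefrac{1}{p})\beta}$. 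Also fix integer vectors $v_1=v_2+v_3$ all lying in a fixed dyadic annulus $2^{m_0}<|v_j|<2^{m_0+1}$ with $|v_j|$ bounded away from the endpoints and with $|v_1|$ additionally small enough that $\chi(v_1/2^{m_0+1})>0$ (possible since $\chi$ is smooth and $\equiv1$ on $B_{\nicefrac{1}{2}}$, hence positive on a strictly larger ball), together with amplitudes $A_j\in\mathbb C^d$, $A_j\cdot v_j=0$, chosen — a finite-dimensional adjustment with room since $d\ge3$ — so that the explicit cubic ``flux form'' appearing below is a positive real number. Finally put $\xi_j^{(n)}:=2^{n-m_0}v_j$ and, for $n\ge N$ (with $N$ such that $B_{\sigma_n^{-1}}(\xi_j^{(n)})\subset\{\lambda_n\le|\xi|\le\lambda_{n+1}\}$ for all $j$),
\begin{equation*}
\widetilde w_n := \mathbb P\Big[\Phi_{\sigma_n}\,\mathrm{Re}\sum_{j=1}^3 A_j\,e^{2\pi i\,\xi_j^{(n)}\cdot x}\Big],\qquad w_n:=\widetilde w_n/\|\widetilde w_n\|_2,
\end{equation*}
$\mathbb P$ being the Leray projection.

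First I would verify (1) and (2). The Fourier support of each summand is $B_{\sigma_n^{-1}}(\xi_j^{(n)})\subset\{\lambda_n\le|\xi|\le\lambda_{n+1}\}$ by the choice of $N$, and $\mathbb P$ is an order-zero Fourier multiplier, so it does not enlarge the support, giving \eqref{eq:Fourier_support_w_n}. Each $\Phi_{\sigma_n}A_je^{i\xi_j^{(n)}x}$ fails to be divergence free only by $(\nabla\Phi_{\sigma_n}\cdot A_j)e^{i\xi_j^{(n)}x}$, which has the same frequency localization and $L^2$ norm $O(\sigma_n^{-1})$, so $(\mathbb P-\Id)$ applied to the bracket has $L^2$ norm $O(\sigma_n^{-1}\lambda_n^{-1})\to0$ and, by Bernstein, is of strictly lower order than the main term in every $L^p$, $1\le p\le\infty$. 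Since the $\pm\xi_j^{(n)}$ are pairwise distinct (automatic for a triad confined to one shell), all the oscillating cross terms in $\|\widetilde w_n\|_2^2$ vanish by frequency support (the differences $\xi_j^{(n)}\pm\xi_l^{(n)}$ have magnitude $\gtrsim 2^{n-m_0}\gg\sigma_n^{-1}$, while $\widehat{|\Phi_{\sigma_n}|^2}$ is supported in $B_{2\sigma_n^{-1}}$), so $\|\widetilde w_n\|_2\to(\tfrac12\sum_j|A_j|^2)^{1/2}=:c_\star>0$. Hence the normalizing factor stays bounded away from $0$ and $\infty$, and $\|w_n\|_p\sim_p\|\Phi_{\sigma_n}\|_p\sim_p\lambda_n^{(\nicefrac{1}{2}-\nicefrac{1}{p})\beta}$, which is (2).

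For (3): since $\Supp\widehat{w_n}\subset\{\lambda_n\le|\xi|\le\lambda_{n+1}\}$, for $q\le n-1$ we get $(w_n)_{\le q}=0$, hence the $q$-flux vanishes; for $q\ge n+1$ we get $(w_n)_{\le q}=w_n$, and, using that $\Delta_{\le q}$ is self-adjoint and commutes with $\nabla$, the $q$-flux equals $-\int(w_n\otimes w_n):\nabla w_n=-\tfrac12\int w_n\cdot\nabla|w_n|^2=0$ by incompressibility. For $q=n$ I would expand $w_n\otimes w_n$ and $(w_n)_{\le n}$ into plane waves: the only non-oscillatory contributions to $\int\D(w_n\otimes w_n)_{\le n}\cdot(w_n)_{\le n}$ come from (a) the near-zero-frequency part of $w_n\otimes w_n$, which survives $\Delta_{\le n}$ but is orthogonal to the frequency-$\sim\lambda_n$ field $\nabla(w_n)_{\le n}$ and so contributes $0$; and (b) the beat at $\xi_2^{(n)}+\xi_3^{(n)}=\xi_1^{(n)}$ and its conjugate, which survives $\Delta_{\le n}$ precisely because $\chi_{n+1}(\xi_1^{(n)})=\chi(v_1/2^{m_0+1})>0$. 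Carrying out the contraction, term (b) equals $\chi(v_1/2^{m_0+1})^2\,c_\star^{-3}\,|\xi_1^{(n)}|\,\big(\!\int\Phi_{\sigma_n}^3\big)\cdot(\text{flux form})+(\text{lower order})$, where $|\xi_1^{(n)}|\sim\lambda_n$ comes from the gradient and $\int\Phi_{\sigma_n}^3=\sigma_n^{-d/2}\|\Phi\|_{L^3}^3=\lambda_n^{\beta/2}\|\Phi\|_{L^3}^3$ supplies the intermittency power; the errors (the Leray corrector, $\nabla\Phi_{\sigma_n}$ versus $\nabla e^{i\xi_j^{(n)}x}$, and the variation of $\chi_{n+1}$ across $B_{2\sigma_n^{-1}}(\xi_1^{(n)})$) are each $O(\lambda_n^{\beta/d-1})$ relative to the main term. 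Thus $\lambda_n^{-1-\beta/2}\int\D(w_n\otimes w_n)_{\le n}\cdot(w_n)_{\le n}$ converges to a fixed positive constant, and a final rescaling of $\Phi$ normalizes this limit to $2$, giving \eqref{eq:w_n_flux}; the vanishing of the $q$-flux for $q\neq n$ was shown above.

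The hard part — and the only step with real content — is the choice of the resonant triad and amplitudes in the setup: one must place the triad \emph{inside one shell} (which is what forces $\beta<d$ and, via the width $\sigma_n$, controls the Fourier spread), keep the resonant difference mode $\xi_1^{(n)}$ \emph{visible} to the Littlewood--Paley cutoff $\chi_{n+1}$, and still retain enough freedom in the divergence-free amplitudes $A_j$ to make the cubic flux form nonzero with the right sign. This is a small explicit geometry/linear-algebra problem in $\RR^d$ with $d\ge3$; everything else is routine Littlewood--Paley bookkeeping.
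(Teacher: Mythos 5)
Your overall architecture matches the paper's: concentration at Fourier scale $\mu_n=\lambda_n^{\beta/d}$ inside a single dyadic shell, a resonant triad carrying the flux, and the scaling $\lambda_n\cdot\lambda_n^{\beta/2}$ from one gradient times the $L^3$-type intermittency gain. Properties (1), (2) and the vanishing of the flux for $q\neq n$ are handled correctly (your smooth bump even improves the $L^p$ range to $p=1$, where the paper's Dirichlet-kernel blocks fail). The genuine gap is in the $q=n$ flux computation, and it is the one step you call "the only step with real content." Writing the flux as $i\sum_{k_1+k_2+k_3=0}(\widehat w(k_1)\cdot k_2)(\widehat w(k_2)\cdot\widehat w(k_3))\chi_{q+1}^2(k_3)$ and grouping the six orderings of a fixed zero-sum triple $\{a,b,c\}$ using $\widehat w(a)\cdot c=-\widehat w(a)\cdot b$, one finds that the total contribution of the triple is
\begin{equation*}
(\widehat w(a)\cdot b)(\widehat w(b)\cdot\widehat w(c))\bigl[\chi_{q+1}^2(c)-\chi_{q+1}^2(b)\bigr]+(\text{two cyclic analogues}).
\end{equation*}
So the flux is controlled by the \emph{differences} of $\chi_{q+1}^2$ across the triad and vanishes identically whenever $\chi_{q+1}^2$ is constant on $\{\pm\xi_1^{(n)},\pm\xi_2^{(n)},\pm\xi_3^{(n)}\}$ — no matter how positive $\chi_{n+1}(\xi_1^{(n)})$ is. Your stated mechanism ("the beat at $\xi_2+\xi_3=\xi_1$ survives precisely because $\chi_{n+1}(\xi_1^{(n)})>0$") is therefore not the right one, and your expression in step (b), which keeps only the orderings with $k_3=\pm\xi_1^{(n)}$ and factors out the single weight $\chi(v_1/2^{m_0+1})^2$, omits the beats at $\pm\xi_2^{(n)}$ and $\pm\xi_3^{(n)}$ (i.e., four of the six orderings). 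Those are of exactly the same order and encode the cancellation; dropping them makes the leading term wrong, not merely imprecise.

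The construction is repairable, but only by confronting this cancellation, which is where the paper's seemingly odd choices come from: it places the three blocks at radii $\approx\lambda_q$, $\approx\sqrt{3}\lambda_q$ and $\approx 2\lambda_q$, so that $\chi_{q+1}^2\approx 1$ on $A_q$ and $\chi_{q+1}^2\approx 0$ on $C_q^*$ (pinned values, independent of the particular admissible $\chi$), and it chooses the amplitudes $a_k,c_k\parallel e_d$ nearly orthogonal to all relevant wavevectors so that four of the six orderings are $O(\mu_q^{1-d/2})$-small, leaving a single antisymmetric pair weighted by $\chi_{q+1}^2(A_q)-\chi_{q+1}^2(C_q^*)\approx 1$. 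In your version all three $\xi_j^{(n)}$ sit strictly inside the annulus $(\lambda_n,\lambda_{n+1})$, where the only constraint on $\chi$ is that it is radially non-increasing; the differences $\chi^2(v_j/2^{m_0+1})-\chi^2(v_l/2^{m_0+1})$ could well vanish for an admissible $\chi$, so even after restoring the missing terms you must either impose extra conditions tying the triad to the profile $\chi$ (losing the cutoff-independence the paper points out in its remark), or relocate part of the triad to radii near $\lambda_n$ and $2\lambda_n$ where $\chi_{n+1}$ is forced to equal $1$ and $0$. You also need a substitute for the paper's orthogonality trick to control the remaining orderings, since "choose $A_j$ so the flux form is positive" must now be verified for the full three-difference expression above.
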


\begin{remark}
Note that the construction of $w_n$ does not depend on the choice of Littlewood--Paley cutoffs as long as $\Delta_q$ has frequencies supported in $[\lambda_{n-1} , \lambda_{n+1}]$.
\end{remark}
\begin{proof}[Proof of Lemma \ref{lemma:w_n_flux}]
Let $\mu_n =\lambda_n^{\frac{\beta}{d} }$. Note that $1 \leq \mu_n \leq \lambda_n$ and $\lambda_n^{-1}\mu_n \to 0$ as $n\to \infty$ for the given range of $\beta$. For $n\in \NN$, define the integer blocks $A_n$, $B_n$ and $C_n$ by
\begin{align*}
A_n &:=[ \lambda_n,  \lambda_n+2  \mu_n] \times [-\mu_n,\mu_n]^{d-1}\cap \ZZ^d\\
B_n &:= [-  \mu_n,  \mu_n]\times [ \sqrt{3}\lambda_n -4  \mu_n,  \sqrt{3}\lambda_n-2\mu_n]\times [-  \mu_n,  \mu_n]^{d-2}\cap \ZZ^d\\
C_n &:= A_n+B_n.
\end{align*}
Note that we can choose $N$ large enough so that for $n \geq N$ all the wavenumbers $k \in A_n\cup B_n\cup C_n$ satisfy
\begin{align} \label{eq:Bound_on_k_in_ABC}
\lambda_n^2 \leq |k|^2 &\leq \big[(\lambda_n+2\mu_n)^2 +(d-1)\mu_n^2\big] + \big[(\sqrt{3}\lambda_n-2\mu_n)^2 + (d-1)\mu_n^2 \big] \\
&\leq 4\lambda_n^2 = \lambda_{n+1}^2,
\end{align}
because $\lambda_n^{-1}\mu_n \to 0$ as $n\to \infty$.

Now, for $k\in \ZZ^d$, define  vector-valued functions $a_{\cdot},b_{\cdot}$ and $c_{\cdot}$ by
\begin{equation}\label{eq:a_k_b_k_c_k}
\begin{aligned}
a_k &= -\frac{1}{|A_n|^\frac{1}{2}}\Big(\Id -\frac{k\otimes k}{|k|^2}\Big) e_d\\
b_k &= \frac{1}{|B_n|^\frac{1}{2}}\Big(\Id -\frac{k\otimes k}{|k|^2}\Big)(e_1 +e_d) \\
c_k &= \frac{1}{|C_n|^\frac{1}{2}}\Big(\Id -\frac{k\otimes k}{|k|^2}\Big)e_d,
\end{aligned}
\end{equation}
where $e_1 = (1,\dots,0) $, $e_d= (0,\dots,1) $ and $|A_n|$,$|B_n|$  and $|C_n|$ denote the counting measures. 
Then define
\begin{align}
w_n: =   \sum_{k\in \ZZ^d}(i a_k\mathbf{1}_{A_n} -i a_k\mathbf{1}_{A_n^*}+  i b_k\mathbf{1}_{B_n} - i b_k\mathbf{1}_{B_n^{*}} + i c_k \mathbf{1}_{C_n} -i c_k\mathbf{1}_{C_n^*}  )e^{ik\cdot x}.
\end{align}
From \eqref{eq:a_k_b_k_c_k} it is clear that $w_n$ is divergence-free. From \eqref{eq:Bound_on_k_in_ABC} we also see that if $N$ is sufficiently large, we have the desired Fourier support property 
$$
\Supp \mathcal{F} w_n \subset \big\{k:  \textstyle  \lambda_n \leq |k| \leq  \lambda_{n+1} \big\}, \qquad n\geq N.
$$

Next, we show that $w_n$ enjoys the $L^p$ scaling \eqref{eq:lp_scaling_wn}. Estimates for all the blocks are the same, so we show them for $a_k \chi_{A_n}$ only. Using \eqref{eq:a_k_b_k_c_k}, the $L^p$ boundedness of the Leray projection, and $L^p$ estimate for the Dirichlet kernel, we get
\begin{align}
\|  i a_k\mathbf{1}_{A_n} e^{ik\cdot x} \|_p &\lesssim \frac{1}{|A_n|^{\frac12}} \|   \mathbf{1}_{A_n} e^{ik\cdot x} \|_p\\
&\lesssim \mu_n^{(\frac{1}{2}-\frac{1}{p})d}.
\end{align}
Applying such an estimate to all the other blocks, we obtain
$$
\| w_n\|_p \lesssim \mu_n^{(\frac{1}{2}-\frac{1}{p})d} = \lambda_n^{ (\frac{1}{2}-\frac{1}{p}) \beta}.
$$
It is worth noting that the implied constant depends on $c$ if $p\neq 2$. Note that $w_n$ does not have a unit $L^2$ norm, but it can be fixed in the end once the parameter $c$ is determined.

Finally let us show \eqref{eq:w_n_flux}. First, thanks to the Fourier support bound \eqref{eq:Fourier_support_w_n}, if $q>n$, then $(w_n)_{\leq q  } =w_n$. In this case, there is no flux thanks to the incompressibility. In addition, if $q<n$, then $(w_n)_{\leq q} =0$ and the flux is zero as well. We thus only need to consider $n=q$.

Denoting by $ \widehat{w}(k)$ the Fourier coefficients of $w_q$, we have
\begin{align}\label{eq:flux1_0}
\int_{\TT^d} \D (w_q \otimes w_q )_{\leq q} \cdot (w_q )_{\leq q}  = i\sum_{k_1+ k_2 +k_3 =0}  \big( \widehat{w}(k_1)\cdot k_2\big) \big( \widehat{w}(k_2)\cdot\widehat{w}(k_3)\chi_{q+1}^2(k_3)\big),  
\end{align}
where $\chi_{q+1}$ is the cutoff function we used in Section \ref{subsec:LP_def} for the Littlewood-Paley decomposition, cf. \eqref{eq:LP_chi_q}.

Since $w_q$ only has frequencies in three regions $A=A_q \cup A_q^*$, $B=B_q \cup B_q^*$, and  $C=C_q \cup C_q^*$, we only need to consider the following cases
\begin{align*}
k_1+ k_2 +k_3\in A_q+B_q+C_q^* \quad \text{and } \quad k_1+ k_2 +k_3\in A_q^*+B_q^*+C_q,
\end{align*}
as all the other combinations do not allow for $k_1+ k_2 +k_3=0$.
Thus denoting $E_q:= A_q\cup B_q\cup C_q^*$, by symmetry and \eqref{eq:flux1_0} we have 
\begin{align}\label{eq:flux1_1}
\int_{\TT^d} \D (w_q \otimes w_q )_{\leq q} \cdot (w_q )_{\leq q}  &= 2i\sum_{\substack{ k_j \in E_q \forall j  \\ k_1 +k_2+k_3 =0 }}  \big( \widehat{w}(k_1)\cdot k_2\big) \big( \widehat{w}(k_2)\cdot\widehat{w}(k_3)\chi_{q+1}^2(k_3)\big)  .
\end{align}
Notice that if $k_1 \not\in  B_q $, due to \eqref{eq:a_k_b_k_c_k}, $ \widehat{w}(k_1)$ is almost orthogonal to  $k_2$. More precisely, for all $k_j \in E_q $ with  $ k_1 +k_2+k_3 =0$,
\begin{align*}
\widehat{w}(k_1) \cdot k_2\lesssim ( \lambda_q^{-1}\mu_q) \lambda_q |\widehat{w}(k_1)| \lesssim \mu_q^{1-\frac{d}{2}}  \quad\text{whenever $k_1 \not\in  B_q $}.
\end{align*}
Using this observation, the fact that the number of combinations satisfies the estimate
\begin{align}\label{eq:lemma143}
|\{(k_1,k_2,k_3) \in \ZZ^{d}\times \ZZ^{d}\times \ZZ^{d}:  \forall j \,\, k_j \in E_q \,\,\text{and}\,\, k_1 +k_2+k_3 =0   \}|= |A_q| |B_q| \lesssim  \mu_q^{ 2d },
\end{align} 
and the bound $\widehat{w}(k_2)\cdot\widehat{w}(k_3) \lesssim |A_q|^{-\frac12} |B_q|^{-\frac12}$, we get\footnote{Here and in what follows, we write $O(Y)$ for some quantity $X=  O(Y)$ such that $|X|\leq C Y$ for some $C>0$.}  
\begin{align}\label{eq:flux1_2}
 \sum_{\substack{  k_j \in E_q, k_1 \not\in B_q\\k_1 +k_2+k_3 =0 } } \big| \big( \widehat{w}(k_1)\cdot k_2\big) \big( \widehat{w}(k_2)\cdot\widehat{w}(k_3)\chi_{q+1}^2(k_3)\big)     \big|  = O( \mu_q^{1+\frac{d}{2} }).  
\end{align}
It follows from \eqref{eq:flux1_1} and \eqref{eq:flux1_2} that
\begin{align}\label{eq:flux1_3}
\int_{\TT^d} \D (w_q \otimes w_q )_{\leq q} \cdot (w_q )_{\leq q}  &= 2i\sum_{ \substack{k_1 \in B_q, k_2\in A_q, k_3\in  C_q^* \\ k_1 +k_2+k_3 =0}  } +2i\sum_{ \substack{k_1 \in B_q,  k_2\in  C_q^*  k_3\in A_q  \\  k_1 +k_2+k_3 =0 }      } + O( \mu_q^{1+\frac{d}{2}} ).
\end{align}
By anti-symmetry,
\[
\sum_{ \substack{k_1 \in B_q, k_2\in A_q, k_3\in  C_q^* \\ k_1 +k_2+k_3 =0}  } \big( \widehat{w}(k_1)\cdot k_2\big) \big( \widehat{w}(k_2)\cdot\widehat{w}(k_3) \big) = - \sum_{ \substack{k_1 \in B_q,  k_2\in  C_q^*  k_3\in A_q  \\  k_1 +k_2+k_3 =0 }      } \big( \widehat{w}(k_1)\cdot k_2\big) \big( \widehat{w}(k_2)\cdot\widehat{w}(k_3) \big),
\]
so we need to analyze the value of $\chi_{q+1}(k_3)$. When $k_3 \in A_q$, we have  $|k_3|$ is about $\lambda_q$, and hence $\chi_{q+1}^2(k_3)$ is about $1$. When $k_3 \in C_q^*$, then $|k_3|$ is about $2\lambda_q$, and $\chi_{q+1}^2(k_3)$ is close to $0$ when $q$ is sufficiently large.
More precisely,

$$
\big| \chi_{q+1}^2(k_3)- 1 \big| \lesssim \frac{\mu_q}{\lambda_q}, \qquad k_3 \in A_q,
$$
and
$$
\big| \chi_{q+1}^2(k_3)\big| \lesssim \frac{\mu_q}{\lambda_q}, \qquad k_3 \in C_q^*,
$$
with constants depending on the Lipschitz constant of $\chi^2$. Therefore, from \eqref{eq:flux1_3} we get
\begin{equation}
\begin{split}\label{eq:lemma132}
\int_{\TT^d} \D (w_q \otimes w_q )_{\leq q} \cdot (w_q )_{\leq q}  &= -2i\Big(1+O\big(\lambda_q^{-1}\mu_q\big)\Big)\sum_{ \substack{k_1 \in B_q, k_2\in A_q, k_3\in  C_q^* \\ k_1 +k_2+k_3 =0}  } \big( \widehat{w}(k_1)\cdot k_2\big) \big( \widehat{w}(k_2)\cdot\widehat{w}(k_3) \big)\\ & \qquad    + O\big( \mu_q^{1+\frac{d}{2}} \big).
\end{split} 
\end{equation}

It remains to estimate the summation in \eqref{eq:lemma132}. Notice that for indexes in the allowed range
\begin{align*}
\widehat{w}(k_1)\cdot k_2= \frac{i\lambda_q }{|B_q|^\frac{1}{2}}(1+   O(\lambda_q^{-1}\mu_q ))   \quad \text{and}\quad   \widehat{w}(k_2)\cdot \widehat{w}(k_3) =   \frac{1 }{|A_q|^\frac{1}{2}|C_q|^\frac{1}{2}}(1+   O(\lambda_q^{-1}\mu_q )) .
\end{align*}
Therefore, we obtain
\begin{equation}\label{eq:lemma133}
\begin{split}
 \sum_{\substack{k_1 \in B_q, k_2\in A_q ,k_3\in  C_q^* \\ k_1 +k_2+k_3 =0}  } \big( \widehat{w}(k_1)\cdot k_2\big) \big( \widehat{w}(k_2)\cdot\widehat{w}(k_3) \big) &=  \frac{i\lambda_q (1+O( \lambda_q^{-1}\mu_q))^2}{(|A_q||B_q||C_q| )^{\frac{1}{2}}} \sum_{\substack{k_1 \in B_q, k_2\in A_q, k_3\in  C_q^* \\ k_1 +k_2+k_3 =0}  }  1 \\
 &=i\lambda_q (1+O( \lambda_q^{-1}\mu_q))^2|A_q|^{ \frac{1}{2}}|B_q|^{ \frac{1}{2}}|C_q|^{ -\frac{1}{2}}.
\end{split}
\end{equation}

Note that $|A_q|^{ \frac{1}{2}}|B_q|^{ \frac{1}{2}}|C_q|^{ -\frac{1}{2}} =(2  \mu_q)^{ d } (4  \mu_q)^{-\frac{d}{2}} +O( \mu_q^{\frac{d}{2}-1})$.
Then combining \eqref{eq:lemma143}, \eqref{eq:lemma132} and \eqref{eq:lemma133} we obtain
\begin{align}\label{eq:lemma144}
\int_{ \TT^d } \D (w_q \otimes w_q )_{\leq q} \cdot (w_q )_{\leq q}  &= 2(1+O( \lambda_q^{-1}\mu_q))^3 (1+O(\mu_q^{-1}) \lambda_q  \mu_q^{\frac{d}{2}}    + O((  \mu_q)^{1+\frac{d}{2}} ). 
\end{align}
Multiplying by $\lambda_q^{-1-\frac{\beta}{2}}$ and taking the limit as $q\to \infty$, we obtain \eqref{eq:w_n_flux}.
\end{proof}

\subsection{Anomalous dissipation without anomalous work}
 
With Lemma \ref{lemma:w_n_flux} in hand, we can use a simple gluing argument to prove Theorem \ref{thm:main_result_Pi_without_F}. The key is to design the life span of each frequency according to the size of the flux term $\F_q$, so that the final anomalous work $\overline{\F}=0$.

First we apply Lemma \ref{lemma:w_n_flux} with a fixed $\beta$ such that $2<\beta<  2+\nicefrac{\ep}{4}$ to obtain $w_n$. The choice of $\beta$ is dictated by the intermittency dimension $d-2$ for the energy balance (see Section 2 in \cite{1802.05785} for a discussion). Note that, in particular, these functions satisfy the $L^p$ scaling estimates listed in Lemma \ref{lemma:w_n_flux}.

Let 
\begin{align} \label{eq:Def_Lambda_n}
\Lambda_n:=   2  \int_{\TT^d} \D (w_n \otimes w_n )_{\leq n} \cdot (w_n )_{\leq n} \, dx.
\end{align}
Then there exists a sufficiently large $N \in \NN$ such that \eqref{eq:Def_Lambda_n} is positive and strictly increasing when $n \geq N$. Taking $T= \frac{1}{8}    \lambda_n^{-1-\frac{\beta}{2}} $, we aim to construct a solution $(u,f)$ on $(0,T)$.

Next, we construct time cutoffs $\chi_n$ for $n\geq N$ as follows. We start with time scales $\tau_n$ defined by
\begin{equation} \label{eq:def_tau_n}
\tau_n: = (1-2^{-1-\frac{\beta}{2}})^{-1}  4^{-1}\sum_{k\geq n} \lambda_k^{-1-\frac{\beta}{2}} = 4^{-1} \lambda_n^{-1-\frac{\beta}{2}}.
\end{equation}
Note that
\begin{equation} \label{eq:limit_of_Lambda_n_tau_n}
\lim_{n\to \infty}\Lambda_n \tau_n = 1,
\end{equation}
due to Lemma \ref{lemma:w_n_flux}.
Moreover, there exists a small constant $c_0>0$ depending on $\ep$ such that 
\begin{equation} \label{eq:def_c_0}
\tau_n > \tau_{n+1} +c_0 \tau_{n+1}^{\nicefrac{\ep}{4}} \quad \text{for all $n\geq N$.}
\end{equation}

Then we introduce smooth  cutoffs $h_n \in  C^\infty (\RR)$ as follows. First, we fix $h \in C^\infty (\RR) $ such that $h(t)=0$ when $t\leq -1$  and $h(t) =1$ when $t \geq 0$ and both $h^{\frac{1}{2}}$ and $(1-h)^{\frac12}$ are smooth. Next we define
\[
h_n(t)=h\bigg( \frac{t+1}{c_0\tau_n^{\nicefrac{\ep}{4}}} \bigg).
\]
Clearly $h_n \in  C^\infty (\RR)$ and satisfies
\begin{equation}\label{eq:lemma109}
 h_n(t) =
\begin{cases}
0,&  t \leq -1 -  c_0\tau_n^{\nicefrac{\ep}{4}}\\
 1, & t \geq - 1,
\end{cases}
\qquad |h_n'| \lesssim \tau_n^{-\nicefrac{\ep}{4}},
\end{equation}
where the constant in the derivative bound depends only on the profile $h$ and the constant $c_0>0$.

With all these in hand, for $n \geq N$ we define $\chi_n$ by  
\begin{equation}\label{eq:lemma110}
\begin{split}
\chi_n(t)&=\big[h_n((t-T)/ \tau_{n }   ) -h_{n+1}((t-T)/ \tau_{n+1})\big]^\frac{1}{2}.
\end{split}
\end{equation}
Thanks to a simple telescoping and the fact that $\tau_N = 2T$ we have $ \sum_{n\geq N } \chi_{n}^2 =1$ for $t\in (0,T)$.
By the definition, we see that
$$
\chi_n(t)= \begin{cases}
\big[h_n((t-T)/\tau_n)\big]^{\frac{1}{2}},  &t \leq T  -\tau_{n+1}-  c_0\tau_{n+1}^{1+\nicefrac{\ep}{4}} , \\
\big[1-h_{n+1}((t-T)/\tau_{n+1})\big]^{\frac{1}{2}},  &t  \geq T -\tau_{n} \\
1,  & T -\tau_{n} \leq t  \leq T  -\tau_{n+1}-  c_0\tau_{n+1}^{1+\nicefrac{\ep}{4}}
\end{cases}
$$
where we have used \eqref{eq:def_c_0} for the third line.

Now we claim that $\chi_n \in C^\infty_c(\RR)$. Indeed, if $t \geq T - \tau_{n+1} $ or $t \leq T- \tau_{n}   - c_0\tau_{n}^{1+ \nicefrac{\ep}{4}} $, then $\chi_n =0$. The smoothness of $\chi_n$ follows from that of $h^{1/2} $ and $(1 -h)^{1/2}$ since $h_n$ is defined through a translation and rescaling of  $h$. Note that to estimate the derivative of $\chi_n$ we only need to count the leading order rescaling factors from $h$ to $\chi_n$, and hence by \eqref{eq:lemma109} and \eqref{eq:lemma110}
\begin{equation} \label{eq:bound_on_chi_n'}
|\chi_n'| \lesssim \tau_{n+1}^{-1-\ep/4} .  
\end{equation}

Now we are in the position to construct the force and solution of the NSE. Let 
\begin{equation}\label{eq:construction_u2}
u := \sum_{n\geq N} \chi_n w_n \quad  \text{and}  \quad f: = \partial_t u -\Delta u + \D (u\otimes u) .
\end{equation}

In the remainder of this section, we are going to show that the solution given by \eqref{eq:construction_u2} satisfies the statement of Theorem \ref{thm:main_result_Pi_without_F}.

\begin{lemma}
The constructed solution \eqref{eq:construction_u2} satisfies the following: $(u,f) \in \mathcal{N}(Q_T)$, $\|u (t)\|_2 =1$ for all $t \in [0,T)$, and 
\begin{align}\label{eq:thmeq1}
\lim_{t\to T^-} \langle u(t) , \phi \rangle =0 \quad \text{for all $\phi  \in L^2$ }.
\end{align}
\end{lemma}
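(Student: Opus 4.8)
The plan is to verify each of the four defining conditions of $\mathcal{N}(Q_T)$ from Definition~\ref{def:smooth_solution}, then compute the $L^2$ norm exactly, and finally establish the weak limit \eqref{eq:thmeq1}. Smoothness of $u$ on $Q_T$ is immediate: on any compact subinterval of $(0,T)$ only finitely many cutoffs $\chi_n$ are nonzero (since $\tau_n \to 0$ and $\chi_n$ is supported near $T - \tau_n$), so $u$ is a finite sum of smooth functions there, and $f$ is smooth by its definition \eqref{eq:construction_u2}. The equation \eqref{eq:NSE} holds on $Q_T$ by construction of $f$, with pressure recovered from the Leray projection.

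For the $L^2$ computation: the $w_n$ have pairwise disjoint Fourier supports by \eqref{eq:Fourier_support_w_n} (shells $[\lambda_n,\lambda_{n+1}]$), so $\|u(t)\|_2^2 = \sum_{n\ge N} \chi_n(t)^2 \|w_n\|_2^2 = \sum_{n\ge N}\chi_n(t)^2 = 1$ for $t\in[0,T)$, using the normalization $\|w_n\|_2=1$ and the partition of unity $\sum \chi_n^2 = 1$ established right after \eqref{eq:lemma110}. This gives $u \in L^\infty(0,T;L^2)$. For the finite energy input condition $\langle u,f\rangle \in \mathcal{L}(0,T)$: since $\|u(t)\|_2^2 \equiv 1$ is constant on $(0,T)$, the energy equality on $(0,T)$ (valid by smoothness) gives $\langle u(t),f(t)\rangle = \|\nabla u(t)\|_2^2 \ge 0$, so it suffices to show $u \in L^2(0,T;H^1)$, i.e. $\int_0^T \|\nabla u\|_2^2\,dt < \infty$. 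Here $\|\nabla u(t)\|_2^2 \lesssim \sum_n \chi_n(t)^2 \lambda_n^2$ (again by disjoint supports and $\mathcal{F}w_n$ in the shell of radius $\sim\lambda_n$), and since $\chi_n$ is supported in an interval of length $\sim \tau_n \sim \lambda_n^{-1-\beta/2}$, one gets $\int_0^T \|\nabla u\|_2^2\,dt \lesssim \sum_n \tau_n \lambda_n^2 = \sum_n \lambda_n^{1-\beta/2} < \infty$ because $\beta > 2$. Finally, the last condition — that $f(t)$ has a limit in $\mathcal{D}'$ as $t\to T^-$ — follows from \eqref{eq:thmeq1} (which will give $f(t) \rightharpoonup 0$ in $\mathcal{D}'$, since for fixed smooth $\phi$ the time derivative term, the Laplacian term, and the nonlinear term all vanish against $\phi$ as $t\to T^-$), so I would prove \eqref{eq:thmeq1} first.

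For \eqref{eq:thmeq1}: it suffices to test against $\phi$ in a dense subset, say trigonometric polynomials, and then use the uniform bound $\|u(t)\|_2 = 1$ to pass to all of $L^2$. Given a fixed frequency $k$, the mode $\widehat{u}(t,k)$ receives contributions only from those $w_n$ whose Fourier support contains $k$, and for $t$ close enough to $T$ the only active cutoffs $\chi_n$ have $n$ large (those with small $\tau_n$); since $k$ lies in at most one shell $[\lambda_n,\lambda_{n+1}]$, for $t$ sufficiently near $T^-$ we have $\widehat{u}(t,k) = 0$ outright. Hence $\langle u(t),\phi\rangle \to 0$ for every trigonometric polynomial $\phi$, and the density argument finishes it.

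**Main obstacle.** The steps above are essentially bookkeeping; the one place requiring genuine care is checking the derivative bound \eqref{eq:bound_on_chi_n'} is actually good enough and, more importantly, that the gluing does not spoil smoothness at the \emph{accumulation point} $t=T$ — but the point is precisely that we only claim smoothness on $(0,T)$, and near $T$ we only need the weak statement \eqref{eq:thmeq1}, which by the disjoint-shell argument reduces to the trivial observation that each fixed Fourier mode is eventually zero. So the "hard part" is really just organizing the verification cleanly; no single estimate is delicate here (the delicate time-scale design matters for the later vanishing of $\overline{\F}$, not for this lemma).
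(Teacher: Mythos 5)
Your verification of the main points matches the paper's proof: the identity $\|u(t)\|_2^2=\sum_n\chi_n^2(t)\|w_n\|_2^2=1$ via disjoint Fourier supports and Plancherel, the dissipation bound $\int_0^T\|\nabla u\|_2^2\,dt\lesssim\sum_n\tau_n\lambda_n^2\sim\sum_n\lambda_n^{1-\beta/2}<\infty$ using $\beta>2$, and the reduction of $\langle u,f\rangle\in\mathcal{L}(0,T)$ to these two facts (the paper packages this as Lemma \ref{lemma:fancyLebesgue}). For \eqref{eq:thmeq1} you take a slightly different but equally valid route: you observe that each fixed Fourier mode of $u(t)$ is eventually zero as $t\to T^-$ and conclude by density plus the uniform $L^2$ bound, whereas the paper uses the $L^p$ scaling \eqref{eq:lp_scaling_wn} with $p<2$ (so $\|u(t)\|_p\to0$, then test against $L^{p'}\supset L^\infty$ and use density). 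Both are one-line arguments; yours does not even need the intermittency of $w_n$.

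The one place your reasoning goes wrong is the verification that $f(t)$ has a limit in $\mathcal{D}'$ as $t\to T^-$. You assert that $f(t)\rightharpoonup0$ because ``the time derivative term, the Laplacian term, and the nonlinear term all vanish against $\phi$.'' The first two claims are fine (for $\partial_t u=\sum_n\chi_n'w_n$ one uses that $|\langle w_n,\phi\rangle|$ decays faster than any polynomial in $\lambda_n$ while $|\chi_n'|$ grows only polynomially), but the nonlinear term does not vanish: weak $L^2$ convergence $u(t)\rightharpoonup0$ does not pass through the quadratic nonlinearity, and $\langle\D(u\otimes u),\phi\rangle=-\langle u\otimes u,\nabla\phi\rangle$ is controlled only by $\|u\|_2^2\|\nabla\phi\|_\infty$. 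In fact, for this construction the intermittent blocks $w_n$ concentrate near a fixed point of the torus, and a direct computation with the Fourier coefficients of $w_n\otimes w_n$ shows that each $\widehat{u\otimes u}(m)$ converges to a fixed nonzero matrix, i.e.\ $u\otimes u$ converges in $\mathcal{D}'$ to a nonzero (matrix-valued) point mass; so the limit of $f(t)$ exists but is not zero. The existence of the limit is what Definition \ref{def:smooth_solution} requires, and it does hold, but it needs this separate argument (uniform bound $|\widehat{u\otimes u}(m)|\le\|u\|_2^2$ plus convergence of each coefficient, then dominated convergence over $m$) rather than being a consequence of \eqref{eq:thmeq1}. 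To be fair, the paper's own proof is silent on this condition as well, deferring entirely to Lemma \ref{lemma:fancyLebesgue}, whose proof only addresses the finite-energy-input requirement.
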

\begin{proof}
It is easy to see $u$ and $f$ are smooth on $(0,T)$. Due to disjoint Fourier supports of $w_n$ and Plancherel's formula we get
\begin{align} \label{eq:Energy_is_constant}
\|u(t) \|_2^2 = \sum_{n\geq N} \|\chi_n w_n \|_2^2  = \sum_{n\geq N}  \chi_n^2(t) =1  \quad \forall t\in [0,T).
\end{align}
Applying Plancherel's formula again we have
\begin{align*}
  \int_0^T \| \nabla u(t) \|_2^2 ~dt =  \sum_{n\geq N}\int_0^T   \|\chi_n  \nabla w_n \|_2^2 ~dt\lesssim \sum_{n\geq N} \lambda_n^2 \int_{  \Supp \chi_n} 1~dt \lesssim \sum_{n\geq N} \lambda_n^{2 } \Lambda_{n}^{-1}.
\end{align*}
Recall from \eqref{eq:Def_Lambda_n} and Lemma \ref{lemma:w_n_flux} that $\Lambda_{n} \sim \lambda_n^{\nicefrac{\beta}{2} +1} $. Since $\beta >2$, this implies that $u \in L^2 H^1$.
In addition, $\displaystyle \lim_{s\to t}\|u(s)\|_2^2$ exists (and are equal to $1$) for $t=0^+$ and $T^-$ due to \eqref{eq:Energy_is_constant}. So by Lemma~\ref{lemma:fancyLebesgue}, $(u,f) \in \mathcal{N}(Q_T)$. Finally, \eqref{eq:thmeq1} follows from the $L^p$ scaling of $w_n$ for $p<2$.
\end{proof}

This lemma  establishes that the energy has a jump as $t\to T^-$. Now we are going to show that the anomalous work vanishes.
\begin{lemma}
The anomalous work of $f$ is zero:
\begin{align*}
\overline{\F }   =   \limsup_{q \to \infty} | \F_{q} (T ,h)|=0.
\end{align*}
\end{lemma}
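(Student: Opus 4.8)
The plan is to substitute the explicit force $f=\partial_t u-\Delta u+\D(u\otimes u)$ into $\F_q(T,h)$ and reduce the claim to a single assertion about $\Pi_q$. Since $u$ is divergence free and $\|u(t)\|_2=1$ on $[0,T)$, testing \eqref{eq:NSE} against $u$ gives $\langle u,f\rangle=\|\nabla u\|_2^2$; testing the $\Delta_{\le q}$-truncated equation against $(u_{\le q})_{\le q}$ and using $\int_{\TT^d}u_{\le q}\cdot(\D(u\otimes u))_{\le q}\,dx=-\int_{\TT^d}(u\otimes u):\nabla(u_{\le q})_{\le q}\,dx$ gives $\int_{\TT^d}u_{\le q}\cdot f_{\le q}\,dx=\tfrac12\tfrac{d}{dt}\|u_{\le q}\|_2^2+\|\nabla u_{\le q}\|_2^2-\int_{\TT^d}(u\otimes u):\nabla(u_{\le q})_{\le q}\,dx$. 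Integrating the difference over $(T-h,T)$ and recalling the definitions \eqref{eq:def_Pi}, \eqref{eq:def_F} yields
\[
\F_q(T,h)=\tfrac12\|u_{\le q}(T)\|_2^2-\tfrac12\|u_{\le q}(T-h)\|_2^2+\int_{T-h}^T\big(\|\nabla u_{\le q}\|_2^2-\|\nabla u\|_2^2\big)\,dt-\Pi_q(T,h).
\]

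\textbf{Disposing of the non-flux terms.} Since $u(T)=0$ (the weak limit in \eqref{eq:thmeq1}), we have $u_{\le q}(T)=0$, so the first term vanishes. Since $\|u(T-h)\|_2=1$ and $u_{\le q}(T-h)\to u(T-h)$ in $L^2$ as $q\to\infty$, the second term tends to $-\tfrac12$. Since $u\in L^2_tH^1$ by Theorem~\ref{thm:finite_input_imply_finite_dissip}, the projections $\nabla u_{\le q}$ converge to $\nabla u$ in $L^2(\TT^d\times(T-h,T))$ and the third term tends to $0$; alternatively one may split $(T-h,T)$ into the lifespans $I_n$ of the $w_n$ and sum the tail using $\|\nabla w_n\|_2^2\lesssim\lambda_n^2$, $|I_n|\lesssim\lambda_n^{-1-\nicefrac\beta2}$ and $\beta>2$. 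Hence $\lim_{q\to\infty}\F_q(T,h)=-\tfrac12-\lim_{q\to\infty}\Pi_q(T,h)$, and the claim $\overline{\F}=0$ is equivalent to $\lim_{q\to\infty}\Pi_q(T,h)=-\tfrac12$, i.e.\ to the fact that the energy flux through $\lambda_q$ captures exactly the energy defect at $t=T$.

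\textbf{Computing the flux.} The key structural fact is that the integrand $P_q(t)=\int_{\TT^d}(u\otimes u):\nabla(u_{\le q})_{\le q}\,dx$ is essentially supported, in time, on the lifespan of $w_q$. Indeed, before $w_q$ switches on every active building block has index $\le q-1$, so $\Delta_{\le q}$ acts as the identity on it, $(u_{\le q})_{\le q}=u$, and $P_q=\int(u\otimes u):\nabla u=0$ by incompressibility; after $w_q$ switches off every active block has index $\ge q+1$ and is annihilated by $\Delta_{\le q}$, so $(u_{\le q})_{\le q}=0$. On the lifespan of $w_q$ at most $w_{q-1},w_q,w_{q+1}$ are present, and by the Fourier-support separation and the block construction of Lemma~\ref{lemma:w_n_flux} the only term surviving to leading order is the self-interaction of $w_q$ through its own shell, the cross-shell interactions $w_{q\pm1}$–$w_q$ being smaller by the intermittency factor $\lambda_q^{-1}\mu_q\to0$. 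On the plateau $\{\chi_q=1\}$ this self-interaction equals $-\tfrac{\Lambda_q}{2}+o(\Lambda_q)$ by \eqref{eq:w_n_flux} and \eqref{eq:Def_Lambda_n}, and on the (narrow) transition windows $P_q=O(\Lambda_q)$. Integrating over the lifespan and invoking the designed balance $\Lambda_n\tau_n\to1$ of \eqref{eq:limit_of_Lambda_n_tau_n} — the very reason for the choice \eqref{eq:def_tau_n} of the time scales — gives $\Pi_q(T,h)\to-\tfrac12$, which combined with the second step yields $\overline{\F}=\lim_q|\F_q(T,h)|=0$.

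\textbf{Main obstacle.} The delicate point is the last step: one must verify that the cross-shell interactions $w_{q\pm1}$–$w_q$ and the contributions of the overlap (transition) windows — where two consecutive $w_n$ are simultaneously active and the cutoffs $\chi_n$, hence $\partial_t u$, are large — are genuinely negligible relative to the size $\Lambda_q$ of the flux over the lifespan, so that the integrated flux converges exactly to the energy jump. This is where the gap condition \eqref{eq:def_c_0} and the $\nicefrac\ep4$-slack built into the cutoff widths in \eqref{eq:lemma109}–\eqref{eq:lemma110} are used: the transition windows have length $\lesssim c_0\tau_{n+1}^{1+\nicefrac\ep4}$, and $\Lambda_q\,\tau_q^{1+\nicefrac\ep4}=(\Lambda_q\tau_q)\,\tau_q^{\nicefrac\ep4}\to0$, which is precisely what makes the transition and cross-shell errors vanish in the limit.
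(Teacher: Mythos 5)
Your argument is correct and follows essentially the same route as the paper: reduce via the truncated energy balance (the energy jump formula) to showing $\Pi_q(T,h)\to-\tfrac12$, isolate the self-interaction $-\tfrac{\Lambda_q}{2}\chi_q^3$ of $w_q$, and handle the rest using $\Lambda_q\tau_q\to1$ together with the $O(\tau_q^{1+\nicefrac{\ep}{4}})$ length of the transition windows. One small inaccuracy: the cross-shell terms $w_{q\pm1}$--$w_q$ are not shown (and need not be) to be pointwise smaller by the factor $\lambda_q^{-1}\mu_q$; the paper simply bounds the whole error by $O(\lambda_q^{1+\nicefrac{\beta}{2}})$ pointwise and, exactly as you do in your final paragraph, relies on the shortness of the overlap windows where two consecutive blocks coexist.
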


\begin{proof}
Thanks to the previous lemma, we can use the energy jump formula \eqref{eq:energy_jump_formula} to compute
\begin{align*}
\lim_{q \to \infty}  \left[ \Pi_q(T,h) + \F_q(T,h)\right] &= \frac{1}{2}\|u(T)\|_2^2 - \lim_{t \to T -}\frac{1}{2}\|u(t)\|_2^2\\
&= -\frac{1}{2}.
\end{align*}
So in order to show that the anomalous work is zero it suffices to prove that
\[
\lim_{q \to \infty}  \Pi_q(T,h) = -\frac{1}{2}.
\]

By definition \eqref{eq:lemma110}, the time cutoffs have disjoint support in the sense that $\chi_m \cap \chi_n = \emptyset $ if $ |m-n| >1$. Then thanks to the bound on the Fourier support of $w_n$ \eqref{eq:Fourier_support_w_n}, for each $t$ there is $n$ such that
\[
\Supp \mathcal{F} u(t) \subset \big\{k:  \textstyle  \lambda_{n-1} \leq |k| \leq  \lambda_{n+1} \big\}.
\]
So if $q>n$, then $u_{\leq q  }(t) =u(t)$, and there is no flux thanks to the incompressibility. In addition, if $q<n-1$, then $u_{\leq q}(t) =0$ and the flux is zero as well. Hence,
\begin{align}\label{eq:lemma135}
\int_{\TT^d} \D (u \otimes u )_{\leq q} \cdot (u )_{\leq q} ~dx=\chi_q^3(t) \int_{\TT^d} \D (w_q \otimes w_q )_{\leq q} \cdot (w_q )_{\leq q} \,dx+ I_{\text{Err}},
\end{align}
where the error term is defined by
\begin{align*}
I_{\text{Err}}(t) & = \int_{\TT^d} \D \Big( \sum_{ q-1 \leq l\leq q+1 } \chi_l w_l \otimes   \sum_{ q-1 \leq l\leq q+1 }\chi_{l} w_{l} \Big)_{\leq q} \cdot  \Big( \sum_{ q-1 \leq l\leq q+1 }  \chi_l w_l  \Big)_{\leq q} \,dx\\
&-\chi_q^3(t) \int_{\TT^d} \D (w_q \otimes w_q )_{\leq q} \cdot (w_q )_{\leq q} \,dx.
\end{align*}
By \eqref{eq:w_n_flux}, $w_l$ only has nonzero flux through shell $\lambda_l$. So when the index $l$ is equal to $q-1$ or $q+1$ in all three sums in the error term, the integral is zero. Hence, using the estimate \eqref{eq:lp_scaling_wn} in Lemma \ref{lemma:w_n_flux}, we have the following estimate for $I_{\text{Err}} $:
\begin{equation}\label{eq:lemma11111}
   \big| I_{\text{Err}}(t) \big| \lesssim \lambda_n^{ \frac{\beta}{2} +1}\big(\chi_{q-1}(t) + \chi_{q+1}(t)\big) \chi_q(t).
\end{equation}

Now integrating \eqref{eq:lemma135} in time and using the definition of $\Lambda_q$ \eqref{eq:Def_Lambda_n} we obtain
\begin{align}\label{eq:lemma111}
\Pi_{q}(T,h)= -\int_{T-h}^T \int_{\TT^d}   \D (u \otimes u )_{\leq q} \cdot u_{\leq q}  \,dx \,dt = -\frac{\Lambda_q}{2} \int_{T-h}^T \chi_q^3(t) \,dt + \int_{T-h}^T I_{\text{Err}}(t) \,dt.
\end{align}
Recall that our goal is to prove that $\Pi_{q}(T,h) \to -\frac{1}{2} $. To this end, due to \eqref{eq:lemma111}, it suffices to show
$$
\Lambda_q \int_{T-h}^T \chi_q^3 \, dt \to 1 \quad \text{and} \quad \int_{T-h}^T I_{\text{Err}}(t) \,dt\to 0.
$$

First notice that
\[
\lim_{q\to\infty}\tau_q^{-1} \big| \{t : \chi_q(t) =1   \} \big| =1,
\]
and hence
\[
\lim_{q\to\infty} \Lambda_q \int_{\chi_q =1 } \chi_q^3 \, dt =1
\]
due to \eqref{eq:limit_of_Lambda_n_tau_n}.
In addition, from \eqref{eq:lemma109} and \eqref{eq:lemma110} it follows that
\begin{equation*}
\big| \{t : 0< \chi_q(t) < 1   \} \big| \leq  2\tau_q^{1+\nicefrac{\ep}{4}} ,
\end{equation*}
so for $q$ large enough,
\begin{align*}
  \Lambda_q \int_{T-h}^T \chi_q^3 \, dt =  \Lambda_q \int_{\chi_q =1 } \chi_q^3 \, dt + \Lambda_q\int_{0 < \chi_q <1 } \chi_q^3 \, dt \to 1, \quad \text{as $q \to \infty$,}
\end{align*}
as the second term converges to zero thanks to \eqref{eq:limit_of_Lambda_n_tau_n}.

By \eqref{eq:lemma11111}, a similar argument shows that
$$
\left| \int_{T-h}^T I_{\text{Err}}(t) \,dt \right| \lesssim \lambda_q^{\frac{\beta}{2} +1} \int_{\Supp \chi_q \cap \Supp \chi_{q-1}}  1 \,dt  \lesssim \lambda_q^{\frac{\beta}{2} +1} \tau_q^{1+\nicefrac{\ep}{4}} \to 0 \quad \text{as $q \to \infty$.}
$$
\end{proof}

At last, we verify the functional classes for $u$ and $f$, concluding the proof of Theorem \ref{thm:main_result_Pi_without_F}.
\begin{lemma} 
The solution $u$ is almost Onsager critical: $u \in L_t^{3 } B^{\frac{1}{3} -\ep }_{3,\infty} \cap L^{p}_t L^{q} $ for any $p$, $q$ with $\frac{2}{p} +\frac{2}{q} =1+\ep $, and the force $f \in L^{2-\ep} H^{-1}$.
\end{lemma}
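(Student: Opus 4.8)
The plan rests on the fact that $u$ is almost supported on a single Littlewood--Paley shell at each time. By \eqref{eq:lemma110} the cutoffs satisfy $\chi_m\chi_n\equiv0$ when $|m-n|\ge2$, so for each $t\in(0,T)$ there is an index $n=n(t)\ge N$ with $u(t)=\chi_n(t)w_n+\chi_{n+1}(t)w_{n+1}$, and by \eqref{eq:Fourier_support_w_n} then $\Supp\mathcal F u(t)\subset\{\lambda_{n-1}\le|k|\le\lambda_{n+1}\}$. Hence $\Delta_qu(t)\ne0$ for at most $O(1)$ indices $q$, all within distance $2$ of $n(t)$, with $\|\Delta_qu(t)\|_p\lesssim\chi_n(t)\|w_n\|_p+\chi_{n+1}(t)\|w_{n+1}\|_p$. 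Combining this with $\chi_n\le1$ and the $L^p$-scaling \eqref{eq:lp_scaling_wn} gives the pointwise-in-time bounds
\[
\|u(t)\|_{B^{s}_{p,\infty}}\lesssim\lambda_{n(t)}^{\,s+(\frac12-\frac1p)\beta},\qquad\|u(t)\|_{p}\lesssim\lambda_{n(t)}^{\,(\frac12-\frac1p)\beta}\qquad(s\in\RR,\ 1<p\le\infty).
\]

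For the spatial norms I would then integrate in time against $|\Supp\chi_n|\lesssim\tau_n\sim\lambda_n^{-1-\beta/2}$ (from \eqref{eq:def_tau_n} and \eqref{eq:lemma110}). Taking $s=\frac13-\ep$, $p=3$,
\[
\int_0^T\|u(t)\|_{B^{1/3-\ep}_{3,\infty}}^3\,dt\lesssim\sum_{n\ge N}\lambda_n^{\,3(\frac13-\ep+\frac\beta6)}\,\tau_n=\sum_{n\ge N}\lambda_n^{-3\ep}<\infty,
\]
which does not even use the precise value of $\beta$. For $L^p_tL^q_x$ with $\frac2p+\frac2q=1+\ep$ the same estimate yields $\int_0^T\|u(t)\|_q^p\,dt\lesssim\sum_{n\ge N}\lambda_n^{\,p(\frac12-\frac1q)\beta-1-\beta/2}$; inserting $\frac1q=\frac{1+\ep}2-\frac1p$ rewrites the exponent of $\lambda_n$ as $\frac\beta2(1-p\ep)-1$, which is strictly negative for every admissible pair, the extreme case being $p\to(\frac2{1+\ep})^+$ (i.e.\ $q\to\infty$) where it equals $\frac\beta2\cdot\frac{1-\ep}{1+\ep}-1<0$ because $\beta<2+\frac\ep4<\frac{2(1+\ep)}{1-\ep}$; for $p\ge1/\ep$ it is negative automatically.

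For the force write $f=\partial_tu-\Delta u+\D(u\otimes u)$ and estimate each summand in $H^{-1}$. The viscous term needs nothing: since $(u,f)\in\mathcal N(Q_T)$ we have $u\in L^2_tH^1$ by Theorem~\ref{thm:finite_input_imply_finite_dissip}, so $\Delta u\in L^2_tH^{-1}\subset L^{2-\ep}_tH^{-1}$ as $T<\infty$. For the nonlinearity, $\|\D(u\otimes u)(t)\|_{H^{-1}}\lesssim\|u(t)\otimes u(t)\|_2\lesssim\|u(t)\|_4^2\lesssim\lambda_{n(t)}^{\beta/2}$, so
\[
\int_0^T\|\D(u\otimes u)(t)\|_{H^{-1}}^{2-\ep}\,dt\lesssim\sum_{n\ge N}\lambda_n^{\,\beta(2-\ep)/2}\,\tau_n=\sum_{n\ge N}\lambda_n^{\,\frac\beta2(1-\ep)-1}<\infty
\]
since $\beta<2+\frac\ep4<\frac2{1-\ep}$. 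The remaining term $\partial_tu=\sum_{n\ge N}\chi_n'w_n$ is the delicate one: $\|w_n\|_{H^{-1}}\sim\lambda_n^{-1}$, the derivative bound $|\chi_n'|\lesssim\tau_{n+1}^{-1-\ep/4}$ is \eqref{eq:bound_on_chi_n'}, and the transition windows in \eqref{eq:lemma109}--\eqref{eq:lemma110} confine $\chi_n'$ to a set of measure $\lesssim\tau_{n+1}^{1+\ep/4}$; with only $O(1)$ terms active at any $t$ this gives
\[
\int_0^T\|\partial_tu(t)\|_{H^{-1}}^{2-\ep}\,dt\lesssim\sum_{n\ge N}\big(\tau_{n+1}^{-1-\ep/4}\lambda_n^{-1}\big)^{2-\ep}\tau_{n+1}^{1+\ep/4}\sim\sum_{n\ge N}\lambda_n^{\,(1+\frac\beta2)(1-\frac34\ep-\frac14\ep^2)-(2-\ep)},
\]
and one verifies that for $2<\beta<2+\frac\ep4$ this exponent is strictly negative (it equals $-\frac\ep2-\frac{\ep^2}2$ at $\beta=2$ and stays negative up to $\beta$ of order $2+\ep$). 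Adding the three contributions gives $f\in L^{2-\ep}_tH^{-1}$.

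The one genuinely constrained step is this last bound on $\partial_tu$: the large factor $\tau_{n+1}^{-1-\ep/4}$ forced by the sharp gluing must be overcome by the smallness $\tau_{n+1}^{1+\ep/4}$ of the transition window together with the $\lambda_n^{-1}$ gain from the negative Sobolev index, all raised to the power $2-\ep$ — and it is exactly this balance that pins down the admissible range $2<\beta<2+\ep/4$ (and the window exponent $1+\ep/4$) chosen at the start of the section. The rest is the elementary geometric-series bookkeeping above, using only $\tau_n\sim\lambda_n^{-1-\beta/2}$, $\Lambda_n\sim\lambda_n^{1+\beta/2}$, and the scalings \eqref{eq:lp_scaling_wn}.
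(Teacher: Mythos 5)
Your argument is correct and follows essentially the same route as the paper: use the single-shell structure of $u(t)$ together with the $L^p$ scaling \eqref{eq:lp_scaling_wn}, integrate in time against $|\Supp\chi_n|\lesssim\tau_n\sim\lambda_n^{-1-\beta/2}$ for the $u$-norms, and split $f$ into $\partial_t u$, $-\Delta u$, $\D(u\otimes u)$, with the decisive balance coming from $|\chi_n'|\lesssim\tau_{n+1}^{-1-\ep/4}$ against the transition window of measure $\lesssim\tau_{n+1}^{1+\ep/4}$ and the $\lambda_n^{-1}$ gain in $H^{-1}$. The only cosmetic deviations (exponent $2-\ep$ versus the paper's $2-2\ep$, and the slightly loose statement about the measure of $\Supp\chi_n'$, which is harmless since the longer window carries the correspondingly smaller derivative bound) do not affect the validity of the proof.
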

\begin{proof}
Thanks to \eqref{eq:lp_scaling_wn} in Lemma \ref{lemma:w_n_flux} and \eqref{eq:lemma110}, we can compute 
\begin{align*}
\int \|u (t) \|_{B^{\frac{1}{3} -\ep}_{3,\infty}}^3 \, dt\leq \sum_{n \geq N} |\Supp \chi_n | \| w_n\|_{B^{\frac{1}{3} -\ep}_{3,\infty}}^3 \lesssim \sum_{n \geq N} (\tau_n -\tau_{n+1}) \big[\lambda_{n}^{ \frac{1}{3} -\ep } \lambda_{n}^{ \frac{\beta}{6}   }   \big]^3.
\end{align*}
Since $\tau_n \sim \Lambda_n^{-1} \sim \lambda_n^{-\nicefrac{\beta}{2}-1}  $, the summation is indeed finite.

To verify the membership in $ L^{p}_t L^{q}$ we use \eqref{eq:lp_scaling_wn} once again to obtain
\begin{align}
\int \|u (t) \|_{q}^p  \, dt\lesssim  \sum_{n} \big|\Supp \chi_n   \big| \|w_n   \|_{q}^p \sim \sum_n \lambda_n^{\frac{\beta}{2} -1- \frac{ \beta p}{2} \ep}  .
\end{align}
Recall that $2<\beta<  2+\nicefrac{\ep}{4}$. Noticing that the powers obey $  \frac{\beta}{2} -1- \frac{ \beta p}{2} \ep < (\frac{1}{8} - \frac{ \beta p}{2} ) \ep <0 $, we conclude $ u\in L^{p}_t L^{q}$ for any $\frac{2}{p} +\frac{2}{q} =1+\ep $.

To estimate the force $f$, we just compute each part separately:
\begin{align}
\int \|f (t )\|_{H^{-1}}^{2-2\ep} \, dt \lesssim \int \left( \|\partial_t u \|_{H^{-1}}^{2-2\ep}   +  \|\nabla u \|_{ 2}^{2-2\ep}  + \|u\otimes u \|_2^{2-2\ep}  \right) \, dt.
\end{align}
where we have used the fact $|\nabla|^{-1}\D$ is $L^2 \to L^2$ bounded.
Since $u\in L^2 H^1$, the second term is under control. For the nonlinear part,  we again obtain 
\begin{align}\label{eq:lemma1113}
\int   \|u\otimes u \|_2^{2-2\ep}  \, dt \lesssim \int   \|u  \|_4^{4-4\ep}  \, dt \lesssim \int   \sum_n |\Supp \chi_n |  \|w_n  \|_4^{4-4\ep} \, dt,
\end{align}
where we have used the fact that $\Supp_t \chi_n  $ have only finite overlaps. Using the $L^p$ scaling in Lemma \ref{lemma:w_n_flux}, the desired bound follows from \eqref{eq:lemma1113}, the fact that $\tau_n \sim \lambda_n^{-\nicefrac{\beta}{2}-1}  $, and $\beta<  2+\nicefrac{\ep}{4}$:
\begin{align}
\int   \|u\otimes u \|_2^{2-2\ep}  \, dt \lesssim  \sum_n  \tau_n   \lambda_{n}^{ \frac{\beta(4-4\ep)}{4}   }   <\infty.
\end{align}
At last, we check the time derivative. From the Fourier support of $w_n$ and the temporal support of $\chi_n$ it follows that
\begin{align*}
\int   \|\partial_t  u \|_{H^{-1}}^{2-2\ep}  \, dt & \lesssim \sum_{n} \| w_n \|_{H^{-1}}^{2-2\ep} \int  |\chi_n'|^{2-2\ep} \, dt \\
&\lesssim \sum_{n} \lambda_n^{-2+2\ep} \int  | \chi_n'|^{2-2\ep} \, dt.
\end{align*}
Using bound \eqref{eq:bound_on_chi_n'}, $\tau_n \sim \Lambda_n^{-1}$ due to \eqref{eq:def_tau_n}, and $\Lambda_{n} \sim \lambda_n^{\nicefrac{\beta}{2} +1} $ from Lemma \ref{lemma:w_n_flux}, we obtain
$$
|\chi_n'| \lesssim \tau_{n+1}^{-1-\nicefrac{\ep}{4}}\sim \Lambda_{n+1}^{1+\nicefrac{\ep}{4}}  \sim \lambda_{n+1}^{(1+\nicefrac{\ep}{4})(\nicefrac{\beta}{2} +1) },
$$ and conclude that
\begin{align*}
\int   \|\partial_t  u \|_{H^{-1}}^{2-2\ep}  \, dt  
&\lesssim \sum_{n} \lambda_n^{-2+2\ep}  \lambda_{n+1}^{(1-\nicefrac{3\ep}{2}-\nicefrac{\ep^2}{2})(\nicefrac{\beta}{2} + 1)} \lesssim \sum_{n} \lambda_n^{- 2\ep} <\infty .
\end{align*}
\end{proof}
\section{Violating energy balance with continuous energy}\label{sec:continuousE}

This section is devoted to the proof of Theorem \ref{thm:main_result_continuous_E_with_Pi}. Comparing with the example in Section \ref{sec:zeroF_nonzeroPi}, here the energy does not completely transfer to the next shell. Instead, the force also injects energy into larger and larger shells when $t \to T^-$. Towards this end, we first construct intermittent vector fields with large flux, similar to Lemma \ref{lemma:w_n_flux}, and then apply a time-dependent wavenumber cutoff to obtain the final solution. These vector fields can be viewed as intermittent versions of the example in \cite{MR1302409}.

\subsection{Positive energy flux through each sphere}
\begin{lemma}\label{lemma:w_flux}
For any integer $d\geq 3$, there exist  constants $C>0$ and $N\in\NN$ such that for any $0\leq \beta < d$, there exists a divergence free vector fields $w  \in L^2(\TT^d)$  with the following properties. 
\begin{enumerate}
\item For any $q\in \NN$, the bound holds
\begin{equation}\label{eq:lp_scaling_w}
\|w_q \|_p \lesssim_p  \lambda_q^{ (\nicefrac{1}{2}-\nicefrac{1}{p}) \beta}, 
\end{equation}
for all $1<p \leq \infty$;
\item For any $q\geq N$, the energy flux through wavenumber $\lambda_q$ satisfies 
\begin{equation}
\int_{\TT^d} \D (w  \otimes w  )_{\leq q} \cdot (w  )_{\leq q} \, dx \geq  C \lambda_q^{\nicefrac{\beta }{2} + 1}.
\end{equation}
 
\end{enumerate}
\end{lemma}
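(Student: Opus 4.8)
The plan is to build $w$ as a superposition over dyadic scales of intermittent blocks of the type constructed in Lemma~\ref{lemma:w_n_flux}. Given $\beta\in[0,d)$, let $W_n\in C^\infty(\TT^d)$, $n\ge N$, be the fields furnished by that lemma (normalised so that $\|W_n\|_2=1$), so that $\Supp\mathcal F W_n\subset\{\lambda_n\le|k|\le\lambda_{n+1}\}$, the bound \eqref{eq:lp_scaling_wn} holds, and each $W_n$ carries energy flux \eqref{eq:w_n_flux} through the single wavenumber $\lambda_n$ and none through $\lambda_q$ for $q\ne n$. Then set
\[
w:=\sum_{n\ge N}W_n ,
\]
a divergence-free trigonometric series whose partial sums are smooth and, as the estimates below show, obey the stated bounds uniformly (so that $w$, read at the level of its Littlewood--Paley pieces, is the object meant by the lemma).

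First I would prove \eqref{eq:lp_scaling_w}. The Fourier symbol of $\Delta_q$ is supported in $\{\lambda_{q-1}\le|\xi|\le\lambda_{q+1}\}$ and vanishes at $|\xi|\in\{\lambda_{q-1},\lambda_{q+1}\}$; hence, by the spectral localisation \eqref{eq:Fourier_support_w_n}, only the blocks $W_{q-1}$ and $W_q$ contribute to $\Delta_q w$, i.e. $\Delta_q w=\Delta_q(W_{q-1}+W_q)$. Since $\Delta_q$ is bounded on $L^p$ uniformly in $q$, \eqref{eq:lp_scaling_wn} gives
\[
\|w_q\|_p=\|\Delta_q w\|_p\lesssim_p\|W_{q-1}\|_p+\|W_q\|_p\lesssim_p\lambda_q^{(\nicefrac{1}{2}-\nicefrac{1}{p})\beta},\qquad 1<p\le\infty .
\]

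The heart of the matter is the flux lower bound. Fix $q\ge N$ and expand
\[
\int_{\TT^d}\D(w\otimes w)_{\le q}\cdot(w)_{\le q}\,dx=\sum_{m,n,p\ge N}\int_{\TT^d}\D(W_m\otimes W_n)_{\le q}\cdot(W_p)_{\le q}\,dx .
\]
The goal is to show that this equals $\int_{\TT^d}\D(W_q\otimes W_q)_{\le q}\cdot(W_q)_{\le q}\,dx+o(\lambda_q^{\nicefrac{\beta}{2}+1})$; combined with \eqref{eq:w_n_flux} this yields the claim once $N$ is enlarged and $C$ taken small. Writing each integral as a sum over triads $k_m+k_n+k_p=0$ weighted by $\chi_{q+1}^2(k_p)$, one sorts by the scales involved: the diagonal $m=n=p=q$ produces, by Lemma~\ref{lemma:w_n_flux}, exactly $(2+o(1))\lambda_q^{\nicefrac{\beta}{2}+1}$; the pure off-diagonal $m=n=p\ne q$ vanishes by the last assertion of Lemma~\ref{lemma:w_n_flux}; and in every remaining triad — genuinely mixed, or of high--high--low backscatter type — the relation $k_m+k_n+k_p=0$ together with the cutoff forces the three wavevectors to lie within a bounded number of shells of $\lambda_q$, and the Leray projector $\Id-k\otimes k/|k|^2$ built into the blocks makes the Fourier coefficient attached to one of the high frequencies almost orthogonal to its partner wavevector — exactly the cancellation used for the non-resonant part of the flux in the proof of Lemma~\ref{lemma:w_n_flux} — so, combined with the count of admissible triads, all such contributions are $o(\lambda_q^{\nicefrac{\beta}{2}+1})$.

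The hard part will be precisely that last step. A lossless accounting is needed: a crude H\"older estimate of a mixed triad (using $\|W_{q-1}\|_\infty\sim\lambda_q^{\beta/2}$ and $\|W_q\|_2\sim1$) only bounds it by $O(\lambda_q^{\beta/2+1})$, the same order as the main term, and the backscatter configurations moreover generate sums over scales that must be shown to converge. Controlling them requires re-running the cardinality and near-orthogonality bookkeeping of Lemma~\ref{lemma:w_n_flux} in the enlarged configuration space where the wavevectors may come from different shells, together with a choice of the blocks' directional templates that makes cross-scale triads non-resonant; checking that the cutoffs $\chi_{q+1}^2$ do not spoil the cancellations is part of this. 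The remaining issues — the precise sense in which the series defines $w$ and the corresponding $L^2$ statement, and the uniformity in $\beta$ of $N$ and of the implied constants — are routine bookkeeping once the blocks of Lemma~\ref{lemma:w_n_flux} are available.
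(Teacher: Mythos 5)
Your approach --- superposing the single-shell blocks of Lemma \ref{lemma:w_n_flux} over all scales --- is genuinely different from what the paper does (the paper builds a new self-similar, Eyink-type field out of four directional families $\xi_1,\dots,\xi_4$ whose resonance relations $\xi_3-\xi_1-\xi_2=0$, etc., produce the flux through \emph{every} sphere via interactions between adjacent shells). Unfortunately, as written your construction fails at the very first requirement: with $\|W_n\|_2=1$ and disjoint Fourier supports, Plancherel gives $\|w\|_2^2=\sum_{n\ge N}\|W_n\|_2^2=\infty$, so $w\notin L^2(\TT^d)$. This is not ``routine bookkeeping.'' To repair it you must decay the amplitudes, say by $\lambda_n^{-\ep}$; but then the diagonal flux at shell $q$ scales like $\lambda_q^{-3\ep}\,\lambda_q^{\nicefrac{\beta}{2}+1}$, which is $o(\lambda_q^{\nicefrac{\beta}{2}+1})$ and no longer meets the required lower bound. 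The paper's construction resolves exactly this tension by simultaneously sharpening the concentration parameter to $\mu_q=\lambda_q^{(\beta+6\ep)/d}$ so that the amplitude loss $\lambda_q^{-3\ep}$ is cancelled by the gain $\mu_q^{d/2}=\lambda_q^{\nicefrac{\beta}{2}+3\ep}$ in the triad count; this trade-off is the substantive content of the construction and is absent from your proposal.

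The second gap is the one you yourself flag: the cross-shell triads. The blocks of Lemma \ref{lemma:w_n_flux} were only engineered so that \emph{within} a fixed shell the resonant set is $A_q+B_q+C_q^*$ (and its conjugate); once you superpose scales, new closed triads appear (e.g.\ $k_1,k_2\in A_n$, $k_3\in A_{n+1}^*$, since $A_{n+1}$ sits near $2\lambda_n e_1$), and you would have to redo the cardinality and near-orthogonality estimates in this enlarged configuration space, possibly after re-choosing the directional templates. You correctly observe that a crude H\"older bound gives $O(\lambda_q^{\nicefrac{\beta}{2}+1})$ --- the same order as the main term --- so nothing short of that full accounting closes the argument, and it is not carried out. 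The paper sidesteps the issue entirely: its field has, by design, only the four prescribed families per shell, the complete list of resonant triads is written down explicitly in \eqref{eq:interaction1}--\eqref{eq:interaction4}, and the positivity of each group is extracted from the differences $\chi^2(\nicefrac{1}{2}e_1)-\chi^2(\nicefrac{\sqrt2}{2}e_1)$, etc., of the Littlewood--Paley symbol evaluated at the distinct radii $|\xi_j|\in\{1,\sqrt2\}$. In short: the strategy could conceivably be made to work, but both of its essential difficulties (the $L^2$/flux trade-off and the cross-scale resonance control) are exactly the points left open, so the proposal does not constitute a proof.
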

\begin{proof}[Proof of Lemma \ref{lemma:w_flux}]
First, we introduce a small parameter $ \ep:= \frac{d-\beta}{100d}$. Let $\mu_q =\lambda_q^{\frac{\beta+6\ep}{d} }$. Note that $1 \leq \mu_q \leq \lambda_q$ for the given range of $\beta$ and $\ep$. Let 
\begin{align}\label{eq:k_i}
\xi_1=(1,0,0,\dots,0)  \quad \xi_2= (0,1,0\dots,0)\quad \xi_3= (1,1,0,\dots,0)\quad \xi_4=(1,-1,0,\dots,0)
\end{align}
and
\begin{align}\label{eq:e_i}
e_1=(0,0,-1,0,\dots,0)  \quad e_2= (1,0, 1,0,\dots,0)\quad e_3= (0,0,1,0,\dots,0)\quad e_4=(1,1,-1,0,\dots,0).
\end{align}
Note that $\xi_j \cdot e_j =0$, which is needed for the divergence free condition, as $e_j$ will be the direction of the Fourier coefficient $\widehat{w}(k)$ for $k\sim \xi_j$.

Define the set
\begin{equation}\label{eq:Omeg_i}
\Omega_{q,j} = \{ \lambda_q \xi_j + \kappa:  \kappa \in[- \mu_q  , \mu_q ]^d \cap \ZZ^d \},
\end{equation}
and $\Omega_{q,j}^* = -\Omega_{q,j}$. Note that $ \Omega_{q,i} \subset\ZZ^d$ and  for all $k_j \in \Omega_{p,j}$ 
\begin{equation}\label{eq:smallangles}
\begin{aligned}
|k_j - \xi_j| &=O(\mu_q).
\end{aligned}
\end{equation}

For $q\in \ZZ$, $1\leq j \leq 4$, define vector-valued functions $a_{q,j}: \ZZ^d \to \mathbb{C}^d$ by
\begin{equation}\label{eq:a_qkj}
a_{q, j}(k) =
\begin{cases} 
  -i\frac{\lambda_q^{-\ep}}{|\Omega_{q,j}|^\frac{1}{2}}\Big(\Id -\frac{k\otimes k}{|k|^2}\Big) e_j& \text{if } k\in \Omega_{q,j}  \\
 i\frac{\lambda_q^{-\ep}}{|\Omega_{q,j}|^\frac{1}{2}}\Big(\Id -\frac{k\otimes k}{|k|^2}\Big) e_j& \text{if } k\in \Omega_{q,j}^* \\
0& \text{otherwise},  
\end{cases}
\end{equation}
where $|\Omega_{q,j}|$ denote the counting measure of $\Omega_{q,j}$. Since for each $j$, the set $\Omega_{q,j}$ has the same size, we will simply write $|\Omega| $ in what follows. Also note that due to the Leray projection $\Id -\frac{k\otimes k}{|k|^2} $, each $a_{q, j}$ satisfies $a_{q, j}(k) \cdot k =0 $.

With coefficients $a_{q,j}$ in hand, we define the vector field $w$ on the Fourier side as
\begin{align}
w : = \sum_{ q \geq 1}  \sum_{1 \leq j \leq 4} \sum_{k \in \ZZ^d }a_{q,j}(k)e^{i k\cdot x} = 
\sum_{\kappa \in \ZZ^d} \widehat{w}(\kappa) e^{i\kappa\cdot x},
\end{align}
where the Fourier coefficient $\widehat{w}(\kappa)$ satisfies
\[
\widehat{w}(\kappa) = \sum_{ q \geq 1}  \sum_{1 \leq j \leq 4} a_{q,j}( \kappa).
\]
From \eqref{eq:a_qkj} it is clear that $w $ is real-valued and divergence-free. The $L^p$ scaling of $w_q$ for $q \in \NN$ is obtained along the same lines as in the proof of Lemma \ref{lemma:w_n_flux}, and thus omitted. In particular, $w \in L^2$ thanks to the decay factor $\lambda_q^{-\ep}$ in \eqref{eq:a_qkj}.

Finally, let us compute the energy flux.
Given that there are no interactions between different shells, on the Fourier side we have
\begin{align*}
\int_{\TT^d} \D (w  \otimes w  )_{\leq q} \cdot (w  )_{\leq q} & = i\sum_{\substack{k_1, k_2, k_3 \in \ZZ^d\\k_1+ k_2 +k_3 =0}}  \big( \widehat{w}(k_1)\cdot k_2\big) \big( \widehat{w}(k_2)\cdot\widehat{w}(k_3)\chi_{q+1}^2(k_3)\big)\\
&=i\sum_{\substack{k_1,k_2, k_3 \in \cup_j (\Omega_{q,j}\cup \Omega^*_{q,j}) \\ k_1+ k_2 +k_3 =0}}  \big( \widehat{w}(k_1)\cdot k_2\big) \big( \widehat{w}( k_2)\cdot\widehat{w}( k_3)\chi_{q+1}^2( k_3)\big)
\end{align*}
We claim that this sum is equal to
\begin{equation}
\begin{aligned}\label{eq:fluxnonzero1}
i\sum_{ k_1+ k_2 +k_3 =0}  &\big( \widehat{w}( k_1)\cdot k_2 \big) \big( \widehat{w}( k_2)\cdot\widehat{w}( k_3)\chi_{q+1}^2( k_3)\big)\\
&  = i\sum_{k_1+k_2+k_3=0}  \big( \widehat{w}(\lambda_q \overline{ \xi}_{k_1})\cdot \overline{ \xi}_{k_2}\lambda_q\big) \big( \widehat{w}(\lambda_q\overline{ \xi}_{k_2})\cdot\widehat{w}(\lambda_q \overline{\xi}_{k_3})\chi_{q+1}^2(\lambda_q\overline{ \xi}_{k_3})\big) +O( \mu_q|\Omega|^{\frac{1}{2}}),
\end{aligned} 
\end{equation}
where we define $\overline{ \xi}_k = \xi_j$ if $k \in \Omega_{q,j}$ and $\overline{ \xi}_k =- \xi_j$ if $k\in \Omega_{q,j}^*$, so $\overline{\xi}_k$ is in the center of the cube that $k$ belongs to, and is equal to the corresponding $\xi_j$ in \eqref{eq:k_i}.
Indeed, since the number of nontrivial interactions is bounded by $|\Omega_{q}|^2$, the claim follows  from \eqref{eq:smallangles}, \eqref{eq:a_qkj}, and the continuity of the dot product and Leray projection. 
 
Following the argument in \cite[p.13]{MR1302409},  the only nonzero terms in \eqref{eq:fluxnonzero1} are as follows. For brevity of the notation, we simply write $\Omega_j =\Omega_{q,j} $ and $\Omega_j^* = \Omega_{q,j}^* $.

For $ \xi_3-\xi_1-\xi_2=0 $, the first group 
\begin{equation}\label{eq:interaction1}
\begin{aligned}
&2i\sum_{k_1 \in \Omega_2,\,  k_2\in \Omega_1 ,\,  k_3\in \Omega_3^* } \lambda_q \big( \widehat{w}(\lambda_q \xi_2)\cdot \xi_1\big) \big( \widehat{w}(\lambda_q\xi_1)\cdot\widehat{w}(-\lambda_q\xi_3)\big)  (\chi_{q+1}^2(\lambda_q \xi_3) -\chi_{q+1}^2(\lambda_q \xi_1))\\
=&2\sum_{k_1 \in \Omega_2,\,  k_2\in \Omega_1 ,\,  k_3\in \Omega_3^* }  \lambda_q^{1-3\ep}| \Omega_q|^{- \frac{3}{2}}(\chi^2(1/2 e_1) -\chi^2(\sqrt{2}/2e_1)) \gtrsim \sum_{k_1 \in \Omega_2,\,  k_2\in \Omega_1 ,\,  k_3\in \Omega_3^* }  \lambda_q \mu_q^{-\frac{3d}{2}} .
\end{aligned} 
\end{equation}
For $ \xi_4-\xi_1+\xi_2 =0$, the second group  
\begin{equation}\label{eq:interaction2}
\begin{aligned}
&2i\sum_{k_1 \in \Omega_2,\,  k_2\in \Omega_4 ,\,  k_3\in \Omega_1^* }\lambda_q  \big( \widehat{w}(\lambda_q\xi_2)\cdot \xi_4\big) \big( \widehat{w}(\lambda_q\xi_4)\cdot\widehat{w}(-\lambda_q\xi_1)\big)  (\chi_{q+1}^2(\lambda_q \xi_1) -\chi_{q+1}^2(\lambda_q \xi_4))\\
=&2 \sum_{k_1 \in \Omega_2,\,  k_2\in \Omega_4 ,\,  k_3\in \Omega_1^* }  \lambda_q^{1-3\ep}   |\Omega_q|^{-\frac{3}{2}}(\chi^2(1/2e_1) -\chi^2(\sqrt{2}/2e_1))\gtrsim \sum_{k_1 \in \Omega_2,\,  k_2\in \Omega_4 ,\,  k_3\in \Omega_1^* }  \lambda_q \mu_q^{-\frac{3d}{2}} 
\end{aligned} 
\end{equation}
and finally for $2\xi_1 -\xi_3 -\xi_4=0 $ and $2\xi_2 -\xi_3 +\xi_4=0 $, the third group
\begin{equation}\label{eq:interaction3}
\begin{aligned}
&2i\sum_{k_1 \in \Omega_4^*,\,  k_2\in \Omega_1 ,\,  k_3\in \Omega_3^* } \lambda_{q+1} \big( \widehat{w}(-\lambda_q\xi_4)\cdot \xi_1\big) \big( \widehat{w}(\lambda_{q+1}\xi_1)\cdot\widehat{w}(-\lambda_q\xi_3)\big)  (\chi_{q+1}^2(\lambda_q \xi_1) -\chi_{q+1}^2(\lambda_q \xi_4))\\
=&2\sum_{k_1 \in \Omega_4^*,\,  k_2\in \Omega_1 ,\,  k_3\in \Omega_3^* }  \lambda_{q+1}^{1- \ep}       \lambda_{q}^{-2\ep} |\Omega_q|^{-1}|\Omega_{q+1}|^{-\frac{1}{2}}(\chi^2(\sqrt{2}/2e_1)  -\chi^2(e_1)) \\
&\gtrsim \sum_{k_1 \in \Omega_4^*,\,  k_2\in \Omega_1 ,\,  k_3\in \Omega_3^* }\lambda_q \mu_q^{-\frac{3d}{2}} ,
\end{aligned} 
\end{equation}
and respectively the forth group
\begin{equation}\label{eq:interaction4}
\begin{aligned}
&2i\sum_{k_1 \in \Omega_4 ,\,  k_2\in \Omega_2 ,\,  k_3\in \Omega_3^* } \lambda_{q+1} \big( \widehat{w}(\lambda_q\xi_4)\cdot \xi_2\big) \big( \widehat{w}(\lambda_{q+1}\xi_2)\cdot\widehat{w}(-\lambda_q\xi_3)\big)  (\chi_{q+1}^2(\lambda_q \xi_3) -\chi_{q+1}^2(\lambda_{q+1} \xi_2))\\
=&2\sum_{k_1 \in \Omega_4 ,\,  k_2\in \Omega_2 ,\,  k_3\in \Omega_3^* }  \lambda_{q+1}^{1- \ep}       \lambda_{q}^{-2\ep} |\Omega_q|^{-1}|\Omega_{q+1}|^{-\frac{1}{2}}(\chi^2(\sqrt{2}/2e_1)  -\chi^2(e_1))\\
&\gtrsim  \sum_{k_1 \in \Omega_4 ,\,  k_2\in \Omega_2 ,\,  k_3\in \Omega_3^* }\lambda_q \mu_q^{-\frac{3d}{2}} ,
\end{aligned} 
\end{equation}

Now it remains to count the number of interactions in \eqref{eq:interaction1}--\eqref{eq:interaction4}. A very rough lower bound suffices due to the positivity of the summands. By considering two cubes with half the length, there are at least $( \frac{1}{2} \lfloor \mu \rfloor )^d \times ( \frac{1}{2} \lfloor \mu \rfloor )^d   $ many interactions. Therefore, we have
\begin{align}
\int_{\TT^d} \D (w  \otimes w  )_{\leq q} \cdot (w  )_{\leq q} \, dx & \geq C \lambda_q \mu_q^{\frac{d}{2}}    +O(\mu_q|\Omega|^{\frac{1}{2}}),
\end{align}
where $C$ is an absolute constant. The conclusion follows from the fact that $\lim_{q \to \infty } \mu_q \lambda_q^{-1}  = 0  $.
\end{proof}

\subsection{Anomalous dissipation with continuous energy}
We proceed to construct the solution using the vector field $w$ in Lemma \ref{lemma:w_flux}. Here we use a different type of argument from the previous section. We simply use a time-dependent wavenumber to apply frequency cutoff.

Let $\chi \in C^\infty_c(\RR^d)$ be the cutoff function in defining the Littlewood-Paley decomposition. We introduce a time-dependent wavenumber  by
\begin{equation}\label{eq:def_Lambda}
\Lambda(t): = (T-t)^{-\frac{1-\ep}{2}},
\end{equation}
where $\ep>0$ can be arbitrary.
Let $P_t$ be the projection into frequencies $\lesssim \Lambda(t) $ by a multiplier with symbol $ \chi(\xi \Lambda(t)^{-1})$. 
Applying Lemma \ref{lemma:w_flux} with $2<\beta <2 +\frac{\ep}{4}$, we define the solution and the force by
\begin{equation}\label{eq:def_continousE_solution}
u := P_t w \quad \text{and }\quad f :=\partial u -\Delta u + \D(u\otimes u).
\end{equation}

We first show that the solution $u$ verifies all properties stated in Theorem \ref{thm:main_result_continuous_E_with_Pi}.

\begin{lemma}
For any $\ep>0$, the solution given by \eqref{eq:def_continousE_solution} is in the class $\mathcal{N}(Q_T)$  and $u \in C([0,T];L^2)$. 
\end{lemma}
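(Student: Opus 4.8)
The plan is to check, in turn, the four bullets of Definition~\ref{def:smooth_solution} together with strong $L^2$-continuity on $[0,T]$, exploiting that the only time-dependence of $u$ sits in the scalar symbol $\sigma(\xi,t):=\chi(\xi\Lambda(t)^{-1})$, which is valued in $[0,1]$, is smooth in $(\xi,t)$ for $t\in[0,T)$, vanishes for $|\xi|\ge\Lambda(t)$, and — crucially — equals $1$ on any fixed frequency $k$ once $\Lambda(t)>2|k|$.

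First I would dispatch smoothness, incompressibility and the equation. On a neighbourhood of any $t_0\in(0,T)$ only the boundedly many $k$ with $|k|\lesssim\Lambda(t_0)$ contribute to $u(x,t)=\sum_k\sigma(k,t)\widehat w(k)e^{ik\cdot x}$ (as $\Lambda$ is continuous and positive there), and each surviving term is $C^\infty$ in $(t,x)$ since $\Lambda\in C^\infty(0,T)$ and $\chi\in C^\infty_c$; hence $u\in C^\infty_{t,x}(Q_T)$, so $f=\partial_t u-\Delta u+\D(u\otimes u)\in C^\infty(Q_T)$. Because $P_t$ is a Fourier multiplier it commutes with $\D$, so $\D u=P_t\D w=0$, and $(u,f)$ solves \eqref{eq:NSE} on $Q_T$ with $p\equiv0$. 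For the $L^\infty_tL^2$ bound and $L^2$-continuity I would use Plancherel: $\|u(t)\|_2\le\|w\|_2$ because $0\le\sigma\le1$, and for $s,t\in[0,T]$ — setting $u(T):=w$, which is consistent since $\sigma(k,t)\to1$ as $t\to T^-$ —
\[
\|u(s)-u(t)\|_2^2=\sum_k|\sigma(k,s)-\sigma(k,t)|^2|\widehat w(k)|^2\longrightarrow0\quad(s\to t)
\]
by dominated convergence, each summand being dominated by $4|\widehat w(k)|^2\in\ell^1$; in particular the $L^2$-limits of $u$ at $0^+$ and $T^-$ exist, and $u\in C([0,T];L^2)$.

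The sole place the choice $\Lambda(t)=(T-t)^{-(1-\ep)/2}$ is used is the finite-dissipation bound. Grouping the Fourier series dyadically, using $\sigma\le1$, that $\sigma(\cdot,t)$ kills frequencies $\gtrsim\Lambda(t)$, and $\|w_q\|_2\lesssim1$ from Lemma~\ref{lemma:w_flux}(1), I expect $\|\nabla u(t)\|_2^2\lesssim\sum_{\lambda_q\lesssim\Lambda(t)}\lambda_q^2\|w_q\|_2^2\lesssim\Lambda(t)^2=(T-t)^{-(1-\ep)}$, which is integrable on $(0,T)$ precisely because $1-\ep<1$; thus $u\in L^2(0,T;H^1)$. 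Then the energy equality on $(0,T)$ (valid by the smoothness above) gives, for $0<a<b<T$,
\[
\int_a^b\langle u,f\rangle\,dt=\tfrac12\|u(b)\|_2^2-\tfrac12\|u(a)\|_2^2+\int_a^b\|\nabla u\|_2^2\,dt,
\]
whose right-hand side converges as $a\to0^+$, $b\to T^-$ by the preceding steps; hence $\langle u,f\rangle\in\mathcal L(0,T)$. (Equivalently, once $u\in L^2H^1$ and $\lim_{s\to t}\|u(s)\|_2^2$ exist at $0^+,T^-$, one may invoke Lemma~\ref{lemma:fancyLebesgue}.)

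Finally I would verify the $\mathcal D'$-limits of $f$. Near $t=0$ every quantity extends smoothly (because $\Lambda$ does), so $f(t)\to f(0)$ in $\mathcal D'$. Near $t=T$: for $\varphi\in\mathcal D$ with $\widehat\varphi$ supported in $\{|k|\le M\}$, once $\Lambda(t)>2M$ one has $\sigma(k,t)=1$ on all relevant $k$, so $\langle\partial_t u(t),\varphi\rangle=0$, i.e. $\partial_t u(t)\to0$ in $\mathcal D'$; combined with $u(t)\to w$ in $L^2$ (whence $\Delta u(t)\to\Delta w$, and $u(t)\otimes u(t)\to w\otimes w$ in $L^1$ so $\D(u\otimes u)(t)\to\D(w\otimes w)$ in $\mathcal D'$) this yields $f(t)\to-\Delta w+\D(w\otimes w)$ in $\mathcal D'$. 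With all four bullets checked, $(u,f)\in\mathcal N(Q_T)$. I do not anticipate a genuine obstacle here — this is a verification lemma — the only mildly delicate step being the $\mathcal D'$-convergence of $f$ at $t=T$, where one exploits that the time-dependent cutoff freezes on each fixed wavenumber so that $\partial_t u$ vanishes weakly; the lone quantitative constraint, $1-\ep<1$, is exactly what makes $\Lambda(t)^2$ time-integrable.
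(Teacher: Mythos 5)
Your argument is correct and follows essentially the same route as the paper: Plancherel on the Fourier side for the uniform $L^2$ bound and the strong continuity $u\in C([0,T];L^2)$, the pointwise bound $\|\nabla u(t)\|_2^2\lesssim \Lambda(t)^2=(T-t)^{-1+\ep}$ for finite dissipation, and then Lemma~\ref{lemma:fancyLebesgue} (equivalently the interior energy equality) to get $\langle u,f\rangle\in\mathcal L(0,T)$; you additionally verify the $\mathcal D'$-limits of $f$, which the paper leaves implicit. The one cosmetic slip is in that last step: a test function on $\TT^d$ need not have band-limited Fourier coefficients, but the conclusion $\partial_t u(t)\to 0$ survives because $\partial_t\sigma(\cdot,t)$ is supported on $|k|\sim\Lambda(t)$ with size $O(|\Lambda'|/\Lambda)$, so rapid decay of $\widehat\varphi$ (or, equivalently, convergence of $\partial_t u(t)$ to $0$ in $H^{-s}$ for $s>2/(1-\ep)$) gives $|\langle\partial_t u(t),\varphi\rangle|\lesssim (T-t)^{-1}\Lambda(t)^{-N}\to 0$.
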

\begin{proof}
Let us show $(u,f) \in \mathcal{N}(Q_T)$. Smoothness of $u$ and $f$ on $(0,T)$ follows directly from the compactness of Fourier support. Thus $(u,f)$ solves the NSE in classical sense. In view of Lemma \ref{lemma:fancyLebesgue}, to prove that $\langle u, f \rangle \in \mathcal{L}(0,T)$ it suffices to show 
$$
u \in C([0,T]; L^2) \cap L^2(0,T;H^1). 
$$ 
For any $0\leq s \leq t \leq T$, by Plancherel's formula we have
\begin{align}
\|u(t) - u(s) \|_2^2  \lesssim \sum_{s^{-1-\ep} \lesssim 2^q \lesssim t^{-1-\ep}} \|w_q \|_2^2,  
\end{align}
which together with $w \in L^2$ implies $ \|u(t) - u(s) \|_2 \to 0$ as $s \to t$. So we get $u \in C([0,T]; L^2) $. To show the finite dissipation, using Plancherel's formula  again gives
\begin{align}
\int_0^T \| \nabla u \|_2^2 \, dt \lesssim \int_0^T \Lambda(t)^2 \, dt = \int_0^T (T-t)^{-1+\ep} \, dt<\infty.
\end{align}
Therefore, by Lemma \ref{lemma:fancyLebesgue} it follows that $\langle  u,f \rangle \in \mathcal{L}(0,T)$.

\end{proof}

\begin{lemma}
The anomalous dissipation of $u$ is nonzero:
\begin{align*}
\overline{\Pi}  =   \lim_{q \to \infty}   \Pi_{q} (T ,h)  >0.
\end{align*}
\end{lemma}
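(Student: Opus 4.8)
The plan is to compute $\lim_{q\to\infty}\Pi_q(T,h)$ directly from the flux, exploiting that $u=P_tw$ freezes its low‑frequency part as $t\to T^-$. By Lemma~\ref{lemma:welldefined} we may fix any $0<h<T$; writing out \eqref{eq:def_Pi},
\[
\Pi_q(T,h)=-\int_{T-h}^{T}\int_{\TT^d}\D(u\otimes u)_{\leq q}\cdot u_{\leq q}\,dx\,dt .
\]
The first observation is that the integrand vanishes whenever $\Lambda(t)<\lambda_q/2$: there $u(t)=P_tw$ and $u(t)\otimes u(t)$ are both supported in frequencies $\lesssim\lambda_q$, so $\Delta_{\leq q}$ acts as the identity and $\int\D(u\otimes u)\cdot u=0$ by incompressibility. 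Hence $\Pi_q(T,h)$ reduces to an integral over the short window $I_q:=\{t\in(T-h,T):\Lambda(t)\geq\lambda_q/2\}=(T-c_q,T)$ with $c_q\sim\lambda_q^{-2/(1-\ep)}$ by \eqref{eq:def_Lambda}.

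On the bulk of $I_q$, namely once $\Lambda(t)\geq 4\lambda_q$, the Fourier multiplier $P_t$ is the identity on $\{|k|\leq\lambda_{q+1}\}$, so $u_{\leq q}(t)=w_{\leq q}$ is independent of $t$ and $\D(u\otimes u)_{\leq q}(t)=\D(P_tw\otimes P_tw)_{\leq q}$. Since $\|(I-P_t)w\|_2\to0$ as $t\to T^-$ and $\Delta_{\leq q}\D$ is bounded from $L^1$ into $L^1$, one gets
\[
\int_{\TT^d}\D(P_tw\otimes P_tw)_{\leq q}\cdot w_{\leq q}\,dx\;\longrightarrow\;\int_{\TT^d}\D(w\otimes w)_{\leq q}\cdot w_{\leq q}\,dx=:F_q\qquad(t\to T^-),
\]
with a quantitative rate in $\Lambda(t)$; on the remaining transition window $\{\lambda_q/2\leq\Lambda(t)\leq4\lambda_q\}$, of length $\lesssim\lambda_q^{-2/(1-\ep)}$, I would bound the flux integrand by Bernstein together with the $L^p$‑scaling estimates \eqref{eq:lp_scaling_w} of Lemma~\ref{lemma:w_flux}, and similarly estimate the $\int_{T-h}^T\|\nabla u_{\leq q}\|_2^2$‑type corrections that arise when comparing $\Pi_q$ with the shell‑$q$ energy identity.

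Putting these together with the change of variable $\lambda=\Lambda(t)$ (so $T-t=\lambda^{-2/(1-\ep)}$ and $dt=\frac{2}{1-\ep}\lambda^{-2/(1-\ep)-1}\,d\lambda$), $\Pi_q(T,h)$ equals $-F_q$ times the measure of $\{\Lambda(t)\geq4\lambda_q\}$ up to controlled errors, and the lower bound $F_q\geq C\lambda_q^{\b/2+1}$ from Lemma~\ref{lemma:w_flux} — together with the choice $2<\b<2+\frac{\ep}{4}$ and the intermittency built into $w$ — is what keeps this quantity bounded away from $0$ and forces convergence. An equivalent route: since $u\in C([0,T];L^2)$, Theorem~\ref{thm:main_result_Pi_F_equal} gives $\overline\Pi=\overline\F$ and the energy‑jump formula \eqref{eq:energy_jump_formula} gives $\Pi_q(T,h)+\F_q(T,h)\to0$, so it suffices to show that $\lim_q\F_q(T,h)$ exists and is a nonzero (negative) number; expanding $f=\partial_tu-\Delta u+\D(u\otimes u)$ and using that $\partial_tu=(\partial_tP_t)w$ is supported at frequency $\sim\Lambda(t)$ while $u_{\leq q}$ stabilizes to $w_{\leq q}$ isolates the offending high–high term as exactly the flux $F_q$. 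Either way the statement, together with \eqref{intro-positiveflux} of Theorem~\ref{thm:main_result_continuous_E_with_Pi}, follows.

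The hard part will be the quantitative bookkeeping: one must verify that the optimal energy flux $F_q\gtrsim\lambda_q^{\b/2+1}$ manufactured by Lemma~\ref{lemma:w_flux} genuinely survives integration against the narrow time window ($\sim\lambda_q^{-2/(1-\ep)}$) on which the wavenumber $\lambda_q$ is actively being crossed, and that every error term — the transition window, the high‑mode dissipation $\int\|\nabla u_{\geq q+1}\|_2^2$, and the discrepancy $F_q-\int\D(P_tw\otimes P_tw)_{\leq q}\cdot w_{\leq q}$ — is of strictly lower order as $q\to\infty$. This is precisely where the relation between $\b$ and $\ep$, and the fine (intermittency dimension close to $d-2$) structure of the building block $w$, enter the argument; both the positivity of the limit and its existence are extracted from these balances rather than from soft arguments.
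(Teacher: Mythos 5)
Your overall strategy --- restrict to the window $I_q=\{t:\Lambda(t)\gtrsim\lambda_q\}$, whose measure is $\sim\lambda_q^{-2/(1-\ep)}$, and pair it with the flux lower bound $F_q\geq C\lambda_q^{\nicefrac{\beta}{2}+1}$ from Lemma~\ref{lemma:w_flux} --- is the same as the paper's, and your observation that the integrand vanishes identically once $\Lambda(t)<\lambda_q/2$ (by incompressibility) is correct. The genuine gap is at the transition window $\{\lambda_q/2\leq\Lambda(t)\leq4\lambda_q\}$. That window has measure comparable, up to constants, to the bulk $\{\Lambda(t)\geq4\lambda_q\}$, and on it the crude bound you propose (Bernstein plus the $L^p$ scaling \eqref{eq:lp_scaling_w}) gives $\bigl|\int_{\TT^d}\D(u\otimes u)_{\leq q}\cdot u_{\leq q}\bigr|\lesssim\lambda_q^{1+\nicefrac{\beta}{2}}$ --- the same order as $F_q$. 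So the ``error'' from the transition window is of the same order as the main term, with uncontrolled sign, and cannot be absorbed; your bookkeeping cannot close as planned. The fallback route via $\overline\Pi=\overline\F$ does not help: computing $\lim_q\F_q(T,h)$ independently is at least as hard, and the natural way to do it is through $\Pi_q$ and \eqref{eq:energy_jump_formula}, which is circular.

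The paper avoids this entirely by never leaving the Fourier side. It writes
$\Pi_q(T,T/2)=\int_{T/2}^T i\sum_{k_1+k_2+k_3=0}\bigl(\widehat w(k_1)\cdot k_2\bigr)\bigl(\widehat w(k_2)\cdot\widehat w(k_3)\chi_{q+1}^2(k_3)\bigr)\,\chi_t(k_1,k_2,k_3)\,dt$
with $\chi_t(k_1,k_2,k_3)=\chi(k_1/\Lambda)\chi(k_2/\Lambda)\chi(k_3/\Lambda)$ and applies Fubini. The key structural point is that the positivity established in Lemma~\ref{lemma:w_flux} holds triple by triple after the $k_2\leftrightarrow k_3$ symmetrization (the four interaction groups \eqref{eq:interaction1}--\eqref{eq:interaction4} are each nonnegative), and the extra weight $\chi_t\in[0,1]$ is symmetric in $(k_2,k_3)$, so it multiplies each nonnegative grouped term by a nonnegative factor. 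Hence the transition window can only add a nonnegative contribution, and the uniform lower bound $\int_{T/2}^T\chi_t\,dt\gtrsim\lambda_q^{-\nicefrac{\beta}{2}-1}$ for every relevant triple (all of which have $|k_i|\sim\lambda_q$) immediately yields $\Pi_q(T,T/2)\gtrsim1$. If you want to keep your physical-space decomposition, you must import this sign information on the transition region rather than estimate it in absolute value.
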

\begin{proof}
Thank to Lemma \ref{lemma:welldefined} we may compute $\overline{\Pi}$ with $h=\frac{T}{2}$. Since by definition $u = P_t w$, we have
\begin{align*}
  \int^{T}_{T/2}\int_{\TT^d} \D (u \otimes u )_{\leq q} \cdot u_{\leq q}  = \int^{T}_{T/2} i\sum_{k_1+ k_2 +k_3 =0}  \big( \widehat{w}(k_1)\cdot k_2\big) \big( \widehat{w}(k_2)\cdot\widehat{w}(k_3)\chi_{q+1}^2(k_3)\big) \chi_t(k_1,k_2,k_3) \,dt,
\end{align*}
where $\chi_t(k_1,k_2,k_3) = \chi(k_1 /\Lambda)\chi(k_2 /\Lambda)\chi(k_3 /\Lambda) $. From the proof of Lemma \ref{lemma:w_flux}, we see that $k_i \sim \lambda_q$ for all nonzero interactions. Using the definition of $\Lambda(t)$ \eqref{eq:def_Lambda} and the fact that $\beta <2 +\frac{\ep}{4}$, we obtain the bound
\begin{align}
\int^{T}_{T/2} \chi_t(k_1,k_2,k_3) \,dt\sim \lambda_q^{-\frac{1+\ep}{1-\ep}-1} \geq \lambda_q^{ -\frac{\beta}{2} -1}. 
\end{align} 
On the other hand, by Lemma \ref{lemma:w_flux},
\[
\begin{split}
 i\sum_{k_1+ k_2 +k_3 =0}  \big( \widehat{w}(k_1)\cdot k_2\big) \big( \widehat{w}(k_2)\cdot\widehat{w}(k_3)\chi_{q+1}^2(k_3)\big) &= \int_{\TT^d} \D (w  \otimes w  )_{\leq q} \cdot (w  )_{\leq q}\\
 &\geq C \lambda_q^{\frac{\beta}{2}+1}.
\end{split}
\]
Then by Fubini's theorem we have
$$
\Pi_{q} (T ,T/2)= \int^{T}_{T/2}\int_{\TT^d} \D (u \otimes u )_{\leq q} \cdot u_{\leq q}  \gtrsim \lambda_q^{ -\frac{\beta}{2} -1} \cdot \lambda_q^{\frac{\beta}{2}+1} =1.
$$

\end{proof}

The rest of this section is devoted to proving the stated regularity of $u$ and $f$. We start with estimating $u$ in spaces near $ L_t^{3 } B^{\frac{1}{3}  }_{3,\infty}$ and near $L^{p}_t L^{q}_x$ for $\frac{2}{p} +\frac{2}{q} =1 $.
\begin{lemma}
The solution $u$ is almost Onsager critical: $u \in L_t^{3 } B^{\frac{1}{3} -\ep }_{3,\infty}$ and $u \in L^{p}_t L^{q}_x$ for any $\frac{2}{p} +\frac{2}{q} =1+\ep $.
\end{lemma}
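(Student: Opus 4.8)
The plan is to exploit the frequency-localized structure of $u=P_tw$. Since Fourier multipliers commute, $\Delta_q u(t)=P_t\Delta_q w$ for every $q$ and $t$, and $P_t$ (the multiplier with symbol $\chi(\xi/\Lambda(t))$) is bounded on $L^p$ uniformly in $t$; hence by \eqref{eq:lp_scaling_w} in Lemma~\ref{lemma:w_flux}, $\|\Delta_q u(t)\|_p\lesssim\|\Delta_q w\|_p\lesssim_p\lambda_q^{(\nicefrac{1}{2}-\nicefrac{1}{p})\beta}$ for all $1<p\le\infty$, uniformly in $t$. The second ingredient is that $\Delta_q u(t)$ is nonzero only when the support of the symbol of $\Delta_q$ meets that of $P_t$, i.e., only when $\lambda_q\lesssim\Lambda(t)$; recalling $\Lambda(t)=(T-t)^{-\frac{1-\ep}{2}}$ from \eqref{eq:def_Lambda}, this confines $\Supp_t\Delta_q u$ to $\{t:\ T-t\lesssim\lambda_q^{-\frac{2}{1-\ep}}\}$, a set of measure $\lesssim\lambda_q^{-\frac{2}{1-\ep}}$. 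These two facts reduce both claimed memberships to the convergence of explicit geometric series in $q$.

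For the Besov estimate I would use $\|v\|_{B^{s}_{3,\infty}}^3=\sup_q(\lambda_q^{s}\|\Delta_q v\|_3)^3\le\sum_q\lambda_q^{3s}\|\Delta_q v\|_3^3$ with $s=\nicefrac{1}{3}-\ep$, integrate in time, and combine the measure bound on $\Supp_t\Delta_q u$ with $\|\Delta_q w\|_3\lesssim\lambda_q^{\nicefrac{\beta}{6}}$ to get $\int_0^T\|u(t)\|_{B^{\nicefrac{1}{3}-\ep}_{3,\infty}}^3\,dt\lesssim\sum_q\lambda_q^{1-3\ep+\nicefrac{\beta}{2}-\frac{2}{1-\ep}}$. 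Since $2<\beta<2+\nicefrac{\ep}{4}$ one has $1-3\ep+\nicefrac{\beta}{2}<2<\frac{2}{1-\ep}$, so the exponent is negative and the tail converges (the finitely many low-frequency terms being harmless). For the Lebesgue estimate fix $p,q$ with $\frac{2}{p}+\frac{2}{q}=1+\ep$. When $q\le 2$, the finite volume of $\TT^d$ gives $\|u(t)\|_q\lesssim\|u(t)\|_2\le\|w\|_2<\infty$, so $u\in L^\infty_tL^q_x\subset L^p_tL^q_x$. When $q>2$ the exponent $(\nicefrac{1}{2}-\nicefrac{1}{q})\beta$ is positive, so summing the increasing dyadic geometric series present in $u(t)$ yields $\|u(t)\|_q\lesssim\Lambda(t)^{(\nicefrac{1}{2}-\nicefrac{1}{q})\beta}=(T-t)^{-\frac{1-\ep}{2}(\nicefrac{1}{2}-\nicefrac{1}{q})\beta}$, and then $\int_0^T\|u(t)\|_q^p\,dt$ is a power-type integral. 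Using the identity $(\nicefrac{1}{2}-\nicefrac{1}{q})p=1-\nicefrac{\ep p}{2}$ coming from $\frac{2}{p}+\frac{2}{q}=1+\ep$, its exponent equals $\frac{1-\ep}{2}\beta(1-\nicefrac{\ep p}{2})$, which is $\le 0$ (bounded integrand) or, if positive, $\le(1-\ep)(1+\nicefrac{\ep}{8})<1$; either way the integral converges.

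All of this is routine harmonic analysis; I expect the only delicate point to be the exponent arithmetic — tracking how the blow-up rate $\Lambda(t)\sim(T-t)^{-\frac{1-\ep}{2}}$ competes with the intermittency exponent $\beta$, and checking that the slack afforded by $\beta<2+\nicefrac{\ep}{4}$ is precisely what makes every resulting series and time integral converge, and in particular not forgetting to treat $q\le 2$ separately, where a naive dyadic summation would run over a decaying rather than growing geometric series.
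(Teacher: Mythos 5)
Your proof is correct and follows essentially the same route as the paper: the $L^p$ scaling \eqref{eq:lp_scaling_w} of the dyadic blocks of $w$, the observation that $\Delta_q u(t)$ is only active for $\lambda_q\lesssim\Lambda(t)=(T-t)^{-\frac{1-\ep}{2}}$, and the exponent arithmetic afforded by $2<\beta<2+\nicefrac{\ep}{4}$. The only differences are cosmetic — you sum over $q$ before integrating in time (Fubini) rather than bounding $\|u(t)\|$ pointwise by a power of $\Lambda(t)$ and using $\Lambda\in L^2$, and you treat the Lebesgue exponents $q\le 2$ separately via $\|u\|_q\lesssim\|u\|_2$, a regime the paper's displayed geometric-series bound glosses over.
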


\begin{proof}
First, recall that $\Lambda \in L^2$ due to \eqref{eq:def_Lambda}.
By definition of the Besov space $B^{\frac{1}{3} -\ep }_{3,\infty} $  and \eqref{eq:lp_scaling_w}, we have
\begin{align}
\int_0^T \|u \|_{B^{\frac{1}{3} -\ep }_{3,\infty}}^3 \, dt \lesssim  \int_0^T \L^{\frac{\beta}{2} + 1 -3\ep } \, dt ,
\end{align}
which is indeed finite thanks to $\beta <2 +\frac{\ep}{4}$ and hence $\frac{\beta}{2}+1-3\ep <2$. Similarly, we also obtain
\begin{align}
\int_0^T \|u \|_{q}^p \, dt \lesssim  \int_0^T \L^{\beta(\frac{p}{2}- \frac{p}{q}) } \, dt.
\end{align}
Using $\frac{2}{p} +\frac{2}{q} =1+\ep $ and again $\beta <2 +\frac{\ep}{4}$, it is not hard to show $\beta(\frac{p}{2}- \frac{p}{q})  <2$, which together with $\Lambda \in L^2$ implies 
$$
\int_0^T \|u \|_{q}^p \, dt <\infty.
$$ 
\end{proof}

Next, we estimate the forcing $f$. To do so let us introduce a general lemma for the projection $P_t$.

\begin{lemma}\label{lemma:timeder_bound}
Suppose $g(t)$ is a smooth function on $[0,T)$ such that $ g \geq 1$. Let $P_g$ be the frequency localized operator with symbol $\chi( \frac{\xi}{g (t)} )$. For any $ u \in L^2(\TT^d)$ and $s\in \RR$, there holds
\begin{equation}
\| \partial_t P_g u (t)\|_{H^s} \lesssim_s \frac{|g'| }{|g| }  |g|^{s},
\end{equation}
where $P_g u (t)  $ denotes the evaluation of $P_g u$ at time $t$.
\end{lemma}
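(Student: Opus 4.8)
The plan is to compute $\partial_t P_g u$ directly on the Fourier side and then estimate the resulting multiplier in $H^s$. Writing $u = \sum_{k} \widehat{u}(k) e^{ik\cdot x}$, we have $P_g u(t) = \sum_k \chi\big(\tfrac{k}{g(t)}\big) \widehat{u}(k) e^{ik\cdot x}$, and since $u$ is time-independent, differentiating term by term gives
\begin{equation}\label{eq:dt_Pg}
\partial_t P_g u(t) = \sum_k \frac{d}{dt}\Big[\chi\Big(\frac{k}{g(t)}\Big)\Big] \widehat{u}(k) e^{ik\cdot x} = -\frac{g'(t)}{g(t)}\sum_k \Big(\frac{k}{g(t)}\cdot \nabla\chi\Big)\Big(\frac{k}{g(t)}\Big)\, \widehat{u}(k) e^{ik\cdot x}.
\end{equation}
So $\partial_t P_g u = -\tfrac{g'}{g} M_g u$, where $M_g$ is the Fourier multiplier with symbol $m_g(\xi) = (\xi\cdot\nabla\chi)(\xi/g)$. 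Because $\chi \in C^\infty_c(\RR^d)$ with $\chi \equiv 1$ on $\{|\xi|\le 1/2\}$ and $\chi \equiv 0$ on $\{|\xi|\ge 1\}$, the function $\zeta \mapsto (\zeta\cdot\nabla\chi)(\zeta)$ is smooth, compactly supported, and in particular supported in the annulus $\{ \tfrac12 \le |\zeta| \le 1\}$. Hence $m_g$ is supported in $\{ \tfrac{g}{2} \le |\xi| \le g \}$ and bounded by $\|\zeta\cdot\nabla\chi(\zeta)\|_{L^\infty} =: C_\chi$, a constant depending only on $\chi$.

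Next I would run the $H^s$ estimate. On the support of $m_g$ we have $|\xi| \sim g$, so $(1+|\xi|^2)^{s/2} \lesssim_s g^s$ when $s\ge 0$ (and $\lesssim_s g^s$ also when $s<0$, using $|\xi|\gtrsim g$ there, since $g\ge 1$). Therefore, by Plancherel,
\begin{equation}\label{eq:Hs_est}
\| \partial_t P_g u(t)\|_{H^s}^2 = \frac{|g'|^2}{|g|^2}\sum_k (1+|k|^2)^s |m_g(k)|^2 |\widehat{u}(k)|^2 \lesssim_s \frac{|g'|^2}{|g|^2}\, |g|^{2s} \sum_{\frac{g}{2}\le |k|\le g} |\widehat{u}(k)|^2 \le \frac{|g'|^2}{|g|^2}\,|g|^{2s}\,\|u\|_2^2,
\end{equation}
and taking square roots gives the claimed bound $\| \partial_t P_g u(t)\|_{H^s} \lesssim_s \tfrac{|g'|}{|g|}|g|^s \|u\|_2$. (The statement as written omits the harmless $\|u\|_2$ factor, which is a fixed constant for the fixed $w$ in the application; I would keep track of it or note it is absorbed.)

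There is no serious obstacle here — this is essentially a one-line symbol computation plus a frequency-localization estimate. The only point requiring a little care is the sign/support bookkeeping in \eqref{eq:dt_Pg}: one must observe that $\nabla_t$ acting on $\chi(k/g(t))$ produces the factor $\tfrac{g'}{g}$ times a new symbol that is \emph{also} frequency-localized (in an annulus at scale $g$), which is what converts the naive estimate $\|u\|_{H^s}$ — possibly infinite — into $g^s\|u\|_2$, finite for $u\in L^2$. For negative $s$ one should note the bound still holds because $g\ge 1$ forces $|\xi|\sim g \ge 1$ on the support, so weights are comparable to $g^s$ either way. I would present \eqref{eq:dt_Pg}, identify the annular support and uniform bound of the symbol, and conclude with \eqref{eq:Hs_est}.
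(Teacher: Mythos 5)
Your argument is correct and is essentially the same as the paper's: both compute $\partial_t$ of the symbol $\chi(\xi/g)$ via the chain rule, observe that the resulting multiplier is supported in the annulus $\{\tfrac{g}{2}\le|\xi|\le g\}$ and bounded by $\tfrac{|g'|}{|g|}$ times a constant, and conclude by Plancherel. Your remark about the omitted $\|u\|_2$ factor is a fair (and harmless) observation.
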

\begin{proof}
By chain rule, on Fourier side we have
\begin{align*}
\partial_t    \widehat{P_g u}   =      - \frac{\nabla\chi(g^{-1} \X)  \cdot \X g'}{g^{2}} \widehat{u}  .
\end{align*}
Due to the choice of cutoff function $\chi$, we see that the multiplier $\partial_t    \widehat{P_g}$ satisfies
\begin{align*}
 \Supp   \frac{\nabla\chi (g^{-1} \X) \cdot \X g'}{g^{2}}   \subset \{\X   : \textstyle \frac{1}{2}g\leq  |\X| \leq  g \},
\end{align*}
and consequently
\[
\left| \frac{\nabla\chi (g^{-1} \X) \cdot \X g'}{g^{2}} \right| \lesssim \frac{|g'| }{|g| }.
\]
The result follows from Plancherel's formula.

\end{proof}

\begin{lemma}
The forcing $f$ defined in \eqref{eq:def_continousE_solution} satisfies 
\begin{align*}
f \in     L^{2-\ep}_t H^{-1 } .
\end{align*}
\end{lemma}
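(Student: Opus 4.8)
The plan is to write $f=\partial_t u-\Delta u+\D(u\otimes u)$ and estimate the three pieces separately in $L^{2-\ep}_t H^{-1}$, using throughout that $u(t)=P_t w$ has Fourier support in $\{|\xi|\lesssim\Lambda(t)\}$ and that $\Lambda\in L^{2}(0,T)$.

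The diffusive term is harmless: since $(u,f)\in\mathcal{N}(Q_T)$ we have $u\in L^2(0,T;H^1)$ by Theorem~\ref{thm:finite_input_imply_finite_dissip}, and on the finite interval $(0,T)$ this embeds into $L^{2-\ep}_t H^1$, so $\|\Delta u\|_{L^{2-\ep}_t H^{-1}}\lesssim\|u\|_{L^{2-\ep}_t H^1}<\infty$. For the nonlinear term, $\|\D(u\otimes u)(t)\|_{H^{-1}}\lesssim\|u(t)\otimes u(t)\|_2\lesssim\|u(t)\|_4^2$, and summing the $L^4$ norms of the dyadic pieces $w_q$ with $\lambda_q\lesssim\Lambda(t)$ --- a convergent geometric series, since \eqref{eq:lp_scaling_w} gives $\|w_q\|_4\lesssim\lambda_q^{\beta/4}$ and $\beta>0$ --- yields $\|u(t)\|_4\lesssim\Lambda(t)^{\beta/4}$. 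Hence, by \eqref{eq:def_Lambda},
\begin{align*}
\int_0^T\|\D(u\otimes u)(t)\|_{H^{-1}}^{2-\ep}\,dt\lesssim\int_0^T\Lambda(t)^{\beta(2-\ep)/2}\,dt=\int_0^T(T-t)^{-(1-\ep)\beta(2-\ep)/4}\,dt,
\end{align*}
which is finite because $\beta<2+\nicefrac{\ep}{4}$ forces $(1-\ep)\beta(2-\ep)<4$.

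The delicate term is $\partial_t u=\partial_t(P_t w)$, and here the crude estimate is not enough. Applying Lemma~\ref{lemma:timeder_bound} with $g=\Lambda$ and $s=-1$, and using $\tfrac{\Lambda'}{\Lambda}=\tfrac{1-\ep}{2}(T-t)^{-1}$ together with $\Lambda^{-1}=(T-t)^{(1-\ep)/2}$, would only give $\|\partial_t u(t)\|_{H^{-1}}\lesssim(T-t)^{-(1+\ep)/2}$; but $(T-t)^{-(1+\ep)(2-\ep)/2}$ is not integrable on $(0,T)$ for $\ep\in(0,1)$, since $(1+\ep)(2-\ep)/2=1+\tfrac{\ep(1-\ep)}{2}>1$. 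The fix is to exploit the intermittency of $w$: the multiplier $\partial_t\chi(\xi\Lambda(t)^{-1})$ is supported in the annulus $\{|\xi|\sim\Lambda(t)\}$, on which $w$ carries $L^2$-mass only of order $\|w_{\sim\log_2\Lambda(t)}\|_2\sim\Lambda(t)^{-\ep}$, thanks to the decay factor $\lambda_q^{-\ep}$ in the Fourier coefficients \eqref{eq:a_qkj}. Re-running the proof of Lemma~\ref{lemma:timeder_bound} with this extra gain gives
\begin{align*}
\|\partial_t u(t)\|_{H^{-1}}\lesssim\frac{|\Lambda'(t)|}{\Lambda(t)}\,\Lambda(t)^{-1}\,\|w_{\sim\log_2\Lambda(t)}\|_2\lesssim(T-t)^{-1}\Lambda(t)^{-1-\ep}=\tfrac{1-\ep}{2}(T-t)^{-(1+\ep^2)/2},
\end{align*}
and now $\int_0^T(T-t)^{-(1+\ep^2)(2-\ep)/2}\,dt<\infty$ since $(1+\ep^2)(2-\ep)<2$ is equivalent to $\ep(1-\ep)^2>0$. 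Adding the three estimates yields $f\in L^{2-\ep}_t H^{-1}$. I expect this last term to be the main obstacle: one genuinely needs the $\lambda_q^{-\ep}$ decay of the building blocks from Lemma~\ref{lemma:w_flux}, rather than the bare bound of Lemma~\ref{lemma:timeder_bound}, to make the time integral converge.
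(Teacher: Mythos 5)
Your treatment of $-\Delta u$ and $\D(u\otimes u)$ is fine and matches the paper. The gap is in the $\partial_t u$ term. You correctly observe that the crude bound of Lemma~\ref{lemma:timeder_bound} gives only $\|\partial_t u(t)\|_{H^{-1}}\lesssim (T-t)^{-(1+\ep)/2}$, whose $(2-\ep)$-th power is not integrable; but your proposed repair does not work as written. The gain you invoke comes from the decay factor $\lambda_q^{-\ep}$ in \eqref{eq:a_qkj}; however, the $\ep$ appearing there is the \emph{internal} parameter of the proof of Lemma~\ref{lemma:w_flux}, defined as $\ep=\frac{d-\beta}{100d}$, not the $\ep$ of Theorem~\ref{thm:main_result_continuous_E_with_Pi}. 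Since $\beta>2$, this internal parameter is below $\frac{d-2}{100d}$ (roughly $1/300$ in dimension three), and it cannot be enlarged much because the construction needs $\mu_q=\lambda_q^{(\beta+6\ep)/d}\le\lambda_q$. So the true gain is $\Lambda(t)^{-\ep_{\mathrm{int}}}$ with a tiny \emph{fixed} $\ep_{\mathrm{int}}$, and your integrability criterion $(1+\ep^2)(2-\ep)<2$ --- which crucially uses that the gain exponent equals the theorem's $\ep$ --- is not available. For instance, with $\ep=1/2$ in $d=3$ the corrected time exponent, after raising to the power $2-\ep$, is still strictly below $-1$ and the integral diverges. Your argument only closes when $\ep\le\ep_{\mathrm{int}}$, and you do not note this restriction or how to remove it.

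The paper sidesteps all of this: it proves $\int_0^T\|f\|_{H^{-1}}^{2-2\ep}\,dt<\infty$ rather than $\int_0^T\|f\|_{H^{-1}}^{2-\ep}\,dt<\infty$, and for the exponent $2-2\ep$ the bare bound of Lemma~\ref{lemma:timeder_bound} already suffices, since
\begin{equation*}
\Big((T-t)^{-\frac{1+\ep}{2}}\Big)^{2-2\ep}=(T-t)^{-(1+\ep)(1-\ep)}=(T-t)^{-1+\ep^2}
\end{equation*}
is integrable. Because Theorem~\ref{thm:main_result_continuous_E_with_Pi} is quantified over all $\ep>0$, the conclusions ``$f\in L^{2-\ep}_tH^{-1}$ for all $\ep$'' and ``$f\in L^{2-2\ep}_tH^{-1}$ for all $\ep$'' coincide after relabeling $\ep\mapsto\ep/2$, so no intermittency gain is needed at all. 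If you insist on the literal exponent $2-\ep$, the honest fix is exactly this relabeling (run the construction with parameter $\ep/2$), not the $L^2$-decay of the building blocks.
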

\begin{proof}
As in Theorem \ref{thm:main_result_Pi_without_F}, we just compute each part of the force separately
\begin{align*}
\int_0^T \|f (t )\|_{H^{-1}}^{2-2\ep} \, dt \lesssim \int_0^T \left(\|\partial_t u \|_{H^{-1}}^{2-2\ep}   +  \|\nabla u \|_{ 2}^{2-2\ep}  + \|u\otimes u \|_2^{2-2\ep}  \right)\, dt.
\end{align*}
By the same argument as in the proof of Theorem~\ref{thm:main_result_Pi_without_F}, we have $\int \left(\|\nabla u \|_{ 2}^{2-2\ep}  + \|u\otimes u \|_2^{2-2\ep} \right) \, dt <\infty$. Thus we only focus on the time derivative part. Applying Lemma \ref{lemma:timeder_bound} with $g(t)= \Lambda(t)$ and using \eqref{eq:def_Lambda}, we immediately get
\begin{align*}
 \int_0^T \|\partial_t u \|_{H^{-1}}^{2-2\ep}\,  dt \lesssim  \int_0^T     (T-t    )^{(-\frac{1}{2} -\frac{\ep}{2} )(2-2\ep)} \, dt \lesssim  \int_0^T     (T-t    )^{-1+ \ep^2} \, dt < \infty.
\end{align*}

\end{proof}

\section*{Acknowledgement}

The authors were partially supported by the NSF grant DMS--1517583. We thank the anonymous referees for many helpful comments which greatly improve the quality of the paper.

\bibliographystyle{alpha}
\bibliography{f_nse} 

\end{document}